\newcommand{\mB}{\mathcal{B}}
\newcommand{\mF}{\mathcal{F}}
\newcommand{\mH}{\mathcal{H}}
\newcommand{\mJ}{\mathcal{J}}
\newcommand{\mK}{\mathcal{K}}
\newcommand{\mL}{\mathcal{L}}
\newcommand{\mM}{\mathcal{M}}
\newcommand{\mO}{\mathcal{O}}
\newcommand{\mS}{\mathcal{S}}
\newcommand{\mU}{\mathcal{U}}
\newcommand{\mV}{\mathcal{V}}
\newcommand{\mW}{\mathcal{W}}
\newcommand{\fa}{\mathfrak{a}}
\newcommand{\fb}{\mathfrak{b}}
\newcommand{\fd}{\mathfrak{d}}
\newcommand{\ff}{\mathfrak{f}}
\newcommand{\fM}{\mathfrak{M}}
\newcommand{\fN}{\mathfrak{N}}
\newcommand{\fp}{\mathfrak{p}}
\newcommand{\fq}{\mathfrak{q}}
\newcommand{\ft}{\mathfrak{t}}
\newcommand{\bfA}{\mathbf{A}}
\newcommand{\bfC}{\mathbf{C}}
\newcommand{\bfF}{\mathbf{F}}
\newcommand{\bfG}{\mathbf{G}}
\newcommand{\bfH}{\mathbf{H}}
\newcommand{\bfQ}{\mathbf{Q}}
\newcommand{\bfR}{\mathbf{R}}
\newcommand{\bfT}{\mathbf{T}}
\newcommand{\bfZ}{\mathbf{Z}}
\newcommand{\bfa}{\mathbf{a}}
\newcommand{\bfb}{\mathbf{b}}
\newcommand{\bfc}{\mathbf{c}}
\newcommand{\bff}{\mathbf{f}}
\newcommand{\bfk}{\mathbf{k}}
\newcommand{\Oo}{\mathcal{O}}
\newcommand{\OF}{\mathcal{O}_F}
\newcommand{\OK}{\mathcal{O}_K}
\newcommand{\OL}{\mathcal{O}_L}
\newcommand{\AF}{\mathbf{A}_F}
\newcommand{\AK}{\mathbf{A}_K}
\newcommand{\AQ}{\mathbf{A}}
\newcommand{\OFv}{\mathcal{O}_{F,v}}
\newcommand{\ov}{\overline}
\newcommand{\be}{\begin{equation}}
\newcommand{\ee}{\end{equation}}
\newcommand{\bes}{\begin{equation*}}
\newcommand{\ees}{\end{equation*}}
\newcommand{\bs}{\begin{split}}
\newcommand{\es}{\end{split}}
\newcommand{\bss}{\begin{split*}}
\newcommand{\ess}{\end{split*}}
\newcommand{\bmat}{\left[ \begin{matrix}}
\newcommand{\emat}{\end{matrix} \right]}
\newcommand{\bsmat}{\left[ \begin{smallmatrix}}
\newcommand{\esmat}{\end{smallmatrix} \right]}
\newcommand{\bml}{\begin{multline}}
\newcommand{\eml}{\end{multline}}
\newcommand{\bmls}{\begin{multline*}}
\newcommand{\emls}{\end{multline*}}
\DeclareMathOperator{\ad}{ad}
\DeclareMathOperator{\alg}{alg}
\DeclareMathOperator{\BC}{BC}
\DeclareMathOperator{\Cl}{Cl}
\DeclareMathOperator{\cond}{cond}
\DeclareMathOperator{\Frob}{Frob}
\DeclareMathOperator{\Gal}{Gal}
\DeclareMathOperator{\GL}{GL}
\DeclareMathOperator{\GSp}{GSp}
\DeclareMathOperator{\Mat}{Mat}
\DeclareMathOperator{\Nm}{N}
\DeclareMathOperator{\Res}{Res}
\DeclareMathOperator{\SL}{SL}
\DeclareMathOperator{\Sp}{Sp}
\DeclareMathOperator{\st}{st}
\DeclareMathOperator{\Sym}{Sym}
\DeclareMathOperator{\Tr}{Tr}
\DeclareMathOperator{\U}{U}
\DeclareMathOperator{\val}{val}
\DeclareMathOperator{\vol}{vol}
\newcommand{\tr}{\textup{tr}\hspace{2pt}}
\theoremstyle{plain}
\newtheorem{thm}{Theorem}
\newtheorem{prop}[thm]{Proposition}
\newtheorem{cor}[thm]{Corollary}
\newtheorem{lemma}[thm]{Lemma}
\newtheorem{conj}[thm]{Conjecture}
\theoremstyle{definition}
\newtheorem{definition}[thm]{Definition}
\newtheorem{rem}[thm]{Remark}
\numberwithin{thm}{section}
\numberwithin{equation}{section}
\newcommand{\up}{\upsilon}
\let\@wraptoccontribs\wraptoccontribs
\begin{document}

\title[Congruence primes for automorphic forms]{Congruence primes for automorphic forms on unitary groups and applications to the arithmetic of Ikeda lifts}
\author[Jim Brown]{Jim Brown$^1$}
\address{$^1$Department of Mathematical Sciences\\
Clemson University\\ Clemson, SC 29634 USA}
\email{jimlb@g.clemson.edu}
\author[Krzysztof Klosin]{Krzysztof Klosin$^2$}
\address{$^2$Department of Mathematics\\
Princeton University\\
Fine Hall, Washington Road\\
Princeton NJ 08544-0001 USA}
\email{kklosin@qc.cuny.edu}

\subjclass[2010]{Primary 11F33; Secondary 11F30, 11F32, 11F55}
\footnotetext{The work of the second author was supported by a grant from the Simons Foundation (\#354890, Krzysztof Klosin) and Young Investigator Grant \#H98230-16-1-0129 from the National Security Agency. In addition the second author was partially supported by a PSC-CUNY Award, jointly funded by The Professional
Staff Congress and The City University of New York.}
\keywords{Hermitian modular forms, congruence primes for automorphic forms, Ikeda lifts}

\maketitle

\begin{abstract}
In this paper we provide a sufficient condition for a prime to be a congruence prime for an automorphic form $f$ on the unitary group $\U(n,n)(\AF)$ for a large class of totally real fields $F$ via a divisibility of a special value of the standard $L$-function associated to $f$. We also study $\ell$-adic properties of the Fourier coefficients of an Ikeda lift $I_{\phi}$ (of an elliptic modular form $\phi$) on $\U(n,n)(\AQ_{\bfQ})$ proving that they are $\ell$-adic integers which do not all vanish modulo $\ell$. Finally we combine these results to show that the condition of $\ell$ being a congruence prime for $I_{\phi}$ is controlled by the $\ell$-divisibility of a product of special values of the symmetric square $L$-function of $\phi$. We close the paper by computing an example when our main theorem applies.
\end{abstract}

\section{Introduction}

The problem of classifying congruences between automorphic forms has attracted a considerable amount of attention due not only to its inherent interest, but also because of the arithmetic applications of many of these congruences.  For instance, a congruence between an elliptic cusp form and an Eisenstein series implies that the residual Galois representation associated to the cusp form is  reducible and non-split.  This in turn implies one can construct certain field extensions with prescribed ramification properties. One can see Ribet's seminal paper on the converse to Herbrand's theorem for such a construction \cite{RibetInvent76}.  The classification of congruence primes for elliptic modular forms was given by Hida in a series of papers \cite{HidaInvent63-81, HidaInvent81,HidaAJM88} where he showed special values of the symmetric square $L$-function control such congruences.

As in the case of elliptic modular forms, congruences between automorphic forms on other reductive algebraic groups have numerous arithmetic applications. One can see \cite{AgarwalBrownMathZ14, AgarwalKlosinJNT13, BalasubramanyamRaghuram15, BergerCompMath09,BochererDummiganSchulzePillotJMSJ12, BrownCompMath07, BrownIMRN11,  DummiganIbukiyamaKatsurada,KatsuradaMathZ08, Katsuradacongruence,  KlosinAnnInstFourier2009, Klosin15,SkinnerUrbanJussieu06,SkinnerUrbanInventMath14}
for examples of such applications. As such, there is considerable interest in classifying congruence primes for automorphic forms on these groups. One would expect that the congruence primes should be controlled by certain special values of $L$-functions attached to the automorphic form. While this remains an open problem, partial progress has been made for automorphic forms of full level defined on symplectic groups in the form of a sufficient condition for a prime to be a congruence prime \cite{KatsuradaMathZ08}.  In this paper we provide a sufficient condition for an automorphic form $f$  (which is an eigenform for the Hecke algebra) defined on the unitary  group $\U(n,n)(\AF)$ to be congruent to an automorphic form on the same group that is orthogonal to $f$. This condition is expressed  in terms of the divisibility of a special value of the standard $L$-function associated to $f$. Here $n>1$ and  $F$ is any totally real finite extension of $\bfQ$ which has the property that every ideal capitulates in the imaginary quadratic extension over which $\U(n,n)_F$ splits (this is true for example if $F$ has class number one).  On the other hand most of the results referenced above \cite{AgarwalBrownMathZ14, AgarwalKlosinJNT13, BergerCompMath09,BochererDummiganSchulzePillotJMSJ12, BrownCompMath07, BrownIMRN11,  DummiganIbukiyamaKatsurada,KatsuradaMathZ08, Katsuradacongruence,  KlosinAnnInstFourier2009, Klosin15,SkinnerUrbanJussieu06,SkinnerUrbanInventMath14}
on congruence primes for automorphic forms on reductive groups require the automorphic forms to be defined over $\bfQ$.
The results of \cite{KatsuradaMathZ08} on congruence primes for automorphic forms defined on $\GSp(2n)(\bfA_{\bfQ})$
 also require the forms to have full level while our result holds for congruence subgroups of the form $\mK_{0,n}(\fN)$ for $\fN$ an ideal in $\mO_{F}$.


More precisely, we prove the following result. Let $\ell$ be a rational prime and fix embeddings $\ov{\bfQ} \hookrightarrow \ov{\bfQ}_{\ell} \hookrightarrow \bfC$. Let $K/F$ be an imaginary quadratic extension.
Let $\fM \subset \mO_{F}$ be an ideal and $f$ a cuspidal Hecke eigenform of level $\fM$ and parallel weight $k$ defined on  $\U(n,n)(\bfA_{F})$ (for precise definitions of level and weight see section \ref{Notation}).  Let $\xi$ be a Hecke character of $K$ of infinity type
$\xi_{\bfa}(z) = \left(\frac{z}{|z|}\right)^{-t}$ for $t = (t, \dots, t) \in \bfZ^{\bfa}$.
  We denote the standard $L$-function associated to $f$ twisted by $\xi$ by $L(s, f, \xi; \st)$.  We consider the value \begin{equation*}
L^{\rm alg}(2n+t/2, f, \xi;\st) = \frac{L(2n+t/2, f, \xi; \st)}{\pi^{nd(k + 2n + t +1)} \langle f, f \rangle}
\end{equation*}
where $d= [F:\bfQ]$, which was shown to be algebraic by Shimura \cite{ShimuraArithmeticity}.  The main result of the first part of this paper is that for a prime $\ell$ up to some technical conditions one has $f$ is congruent modulo $\ell^{b}$ to an automorphic form $f'$, orthogonal to $f$, if $-b = \val_{\ell}\left(\frac{\pi^{dn^2}}{\vol(\mF_{\mK_{0,n}(\fM)})}\ov{L^{\rm alg}(2n+t/2, f, \xi; \st)}\right) < 0$ where $\mF_{\mK_{0,n}(\fM)}$ is a fundamental domain for the congruence subgroup of level $\fM$ (please, see Remark \ref{volume explicit} where its volume is made explicit).  One can see Theorem \ref{thmmain} for a self-contained precise statement of the result.

In the second part of the paper we study arithmetic properties of  hermitian Ikeda lifts 
and then apply the results of the first part to construct a congruence between this lift and an orthogonal automorphic form on $\U(n,n)(\AQ)$ of full level. More specifically, let $K=\bfQ(\sqrt{-D_K})$ be an imaginary quadratic field of discriminant $-D_K$. Let $n>1$ be an integer  and $\phi$ be either a modular form of level $D_K$ and Nebentypus being the quadratic character  associated with $K$ (if $n$ is even) or a modular form of level one (if $n$ is odd). The Ikeda lift is an automorphic form $I_{\phi}$ on $\U(n,n)(\AQ_{\bfQ})$ of full level. It was proven in \cite{Ikeda08} that $I_{\phi}$ is non-zero unless $n \equiv 2 \pmod{4}$ and $\phi$ arises from a Hecke character of an imaginary quadratic field. In this paper we are interested in the non-vanishing of $I_{\phi}$ modulo a prime $\ell$  which is important for constructing congruences. We show that $I_{\phi}$ has Fourier coefficients which are algebraic integers (Proposition \ref{integers1}) and then prove these coefficients do not all vanish modulo $\ell$ unless $n\equiv 2$ (mod 4) and the residual (mod $\ell$) Galois representation attached to $\phi$ restricted to $G_K$  has abelian image (Theorem \ref{not cong to zero}). The condition on the image of the Galois representation is not surprising in light of a result of Ribet \cite{RibetModFuncOneVar77} which says that  modular forms whose $\ell$-adic (i.e., characteristic zero) Galois representations restricted to the absolute Galois group of an imaginary quadratic field have abelian image arise from Hecke characters. 
See section \ref{Arithm} for more details.

Given these arithmetic results and an inner product formula due to Katsurada \cite{Katsurada17}  we construct the aforementioned congruence in section \ref{Congruence to Ikeda lift}. More precisely, we show that up to some technical hypotheses $I_{\phi}$ is congruent to an orthogonal automorphic form $f'$ on $\U(n,n)(\AQ_{\bfQ})$ modulo $\ell^b$ where
\be  b=\val_{\ell}(\mV \eta_{\phi}) \quad \textup{with}\quad
\mV:= \begin{cases} \prod_{i=2}^n\frac{L(i+2k-1, \Sym^2 \phi \otimes \chi_K^{i+1})}{\pi^{2k+2i-1}\Omega_{\phi}^+\Omega_{\phi}^-} & n=2m+1\\ \prod_{i=2}^n \frac{L(i+2k, \Sym^2 \phi \otimes \chi_K^{i})}{\pi^{2k+2i}\Omega_{\phi}^+\Omega_{\phi}^-} & n=2m \end{cases}
\ee
and $\eta_{\phi}$ is a generator of the congruence module of $\phi$.
 Here we require the choice of an auxiliary character $\xi$ so that $\ell$ does not divide a  product of certain special values of an $L$-function twisted by $\xi$. (See Theorem \ref{Ikedacong} for a precise statement.)  While this result is in the spirit of the results of Katsurada on congruence primes for Ikeda lifts on $\GSp(2n)(\bfA_{\bfQ})$ \cite{Katsuradacongruence},
 our result provides more freedom in ``missing'' the $L$-values as we can twist a large family of Hecke characters of $K$ while Katsurada's result allows one only to vary the evaluation point of the $L$-functions.
We then prove that replacing $\val_{\ell}(\mV\eta_{\phi})$ by $\val_{\ell}(\mV)$ 
we can ensure that $f'$ is not an Ikeda lift itself. This result however requires some expected (but so far to the best of our knowledge absent from the literature) properties of the Hecke algebra and the Galois representations attached to automorphic forms on $\U(n,n)(\AF)$ to hold. For details see section \ref{congint}.  One can see section \ref{sec:examples} for explicit examples of the main congruence result on Ikeda lifts. Our methods should apply directly to $\GSp(2n)(\bfA_{F})$ as well where $F$ is a totally real extension of $\bfQ$; this will be the subject of future work.

We include a discussion on how the constructed congruence can be applied to provide evidence for a new case of the Bloch-Kato conjecture, which concerns the motive $\ad^0\rho_{\phi}(3)$, where $\rho_{\phi}$ is the Galois representation attached to $\phi$. Such evidence for the motive $\ad^0\rho_{\phi}(2)$ was provided in \cite{KlosinAnnInstFourier2009, Klosin15}. We refrain ourselves however from formulating a precise theorem because we can only prove it under an assumption that certain cohomology classes lie in the correct eigenspace of the complex conjugation and such a result appears to us less than satisfying on the one hand and clumsy to state on the other (cf. Section \ref{Speculations about consequences for the Bloch-Kato conjecture}).

We would like to thank Thanasis Bouganis for sending us a copy of his paper \cite{Bouganis15preprint} before it was published and David Loeffler for a helpful discussion of the example found in \cite[Section 6]{LoefflerZerbesIwasawa}.  We are also indebted to the anonymous referee for a very careful reading of the paper and numerous suggestions which greatly improved the exposition in the article and prompted us to include section 9.

\section{Notation and terminology} \label{Notation}

\subsection{Number fields and Hecke characters} We fix once and for all an algebraic closure $\ov{\bfQ}$ and an embedding $\iota_{\infty}: \ov{\bfQ} \hookrightarrow \bfC$.
For $z \in \bfC$ we write $e(z) = e^{2 \pi i z}$.
Given a number field $L$, we write $\mO_{L}$ for the ring of integers of $L$.  Given a prime $w$ of $L$, we write $L_{w}$ for the completion of $L$ at $w$, $\mO_{L,w}$ for the valuation ring of $L_{w}$, and $\varpi_{w}$ for a uniformizer. If $L/E$ is an extension of number fields and $\up$ a prime of $E$, we write $L_{\up} = E_{\up} \otimes_{E} L$ and $\mO_{L,\up}  = \mO_{E,\up} \otimes_{\mO_{E}} \mO_{L}$.  We also set $\widehat{\mO}_{L} = \prod_{w \nmid \infty} \mO_{L,w}$. Given a finite prime $w$ of $L$, we let $\val_{w}$ denote the $w$-adic valuation on $L_{w}$. For $\alpha \in L_{w}$, we set $|\alpha|_{L_{w}} = q^{-\val_{w}(\alpha)}$ where $q$ is the cardinality of the residue class field at $w$. We will write $|\alpha|_{w}$ for $|\alpha|_{L_{w}}$ if it is clear from context what is meant.

Let $\bfA_{L}$ denote the adeles of $L$, $\bff_{L}$ the set of finite places of $L$, and $\bfa_{L}$ the set of embeddings of $L$ into $\ov{\bfQ}$. We regard $\bfa_{L}$ as the set of infinite places of $L$ via the
fixed embedding $\iota_{\infty}: \ov{\bfQ} \hookrightarrow \bfC$. We write $\bfA_{L,\bfa_{L}}$ and $\bfA_{L,\bff_{L}}$ for the infinite and finite part of $\bfA_{L}$ respectively.  For $\alpha = (\alpha_w) \in \bfA_{L}$, set $|\alpha|_{L} = \prod_{w} |\alpha_{w}|_{L_{w}}$.  For $\alpha \in \bfA_{L}^{\times}$ or $\alpha \in \bfC^{\bfa_{L}}$ and $c \in \bfR^{\bfa_{L}}$, we set $\alpha^{c} = \prod_{w \in \bfa_{L}} \alpha_{w}^{c_{w}}$. 


We say $\psi$ is a Hecke character of $L$ if $\psi$ is a continuous homomorphism from $\bfA_{L}^{\times}$ to $\{z \in \bfC : |z| = 1\}$ so that $\psi(L^{\times}) = 1$. We write $\psi_{\up}, \psi_{\bff_{L}}$ and $\psi_{\bfa_{L}}$ for its restrictions to $L_{\up}^{\times}$, $\bfA_{L,\bff_{L}}^{\times}$, and $\bfA_{L,\bfa_{L}}^{\times}$ respectively. Given $\psi$, there is a unique ideal $\ff \subset \mO_{L}$ satisfying $\psi_{\up}(\alpha) =1$ if $\up \in \bff_{L}$, $\alpha \in \mO_{L,\up}^{\times}$ and $\alpha - 1 \in \ff_{\up}$ and if $\ff'$ is another ideal of $\mO_{L}$ with this property then $\ff' \subset \ff$. The ideal $\ff$ is referred to as the conductor of $\psi$.  Given an integral ideal $\fN$ of $\OL$ and a Hecke character $\psi$, we write
\begin{equation*}
\psi_{\fN} = \prod_{\substack{\up \mid \fN}} \psi_{\up}.
\end{equation*}

Let $F$ be a totally real extension of $\bfQ$ and $K$ an imaginary quadratic extension of $F$.   Throughout the paper we let $\bfa$ (resp. $\bfb$) denote $\bfa_{F}$ (resp. $\bfa_{K}$) and $\bff$ (resp. $\bfk$) denote $\bff_{F}$ (resp. $\bff_{K}$).  Let $D_{K}$ denote the discriminant of $K$.
Set $\chi_{K}$ to denote the quadratic character associated with the extension $K/F$.  We identify $F_{\bfa} := \bfR \otimes_{\bfQ} F$ with $\bfR^{\bf a}:= \prod_{\up \in {\bf a}} \bfR$ via the map $a \mapsto (\sigma(a))_{\sigma \in {\bf a}}$. 

\subsection{Unitary group}
Let $\bfG_{a}$ denote the additive group scheme and $\bfG_{m}$ the multiplicative group scheme. Let $\Mat_n$ denote the $\bfZ$-group scheme of $n\times n$ matrices ($\cong \bfG_a^{n^2}$). Given a matrix $A \in \Res_{\OK/\OF}\Mat_{n/\OK}$, we set $A^{*} = \ov{A}^{t}$ and $\hat{A} = (A^{*})^{-1}$ where bar denotes the action of the nontrivial element of $\Gal(K/F)$. Associated to the imaginary quadratic extension $K/F$ we have the unitary similitude group scheme over $\OF$:
\begin{equation*}\textrm{GU}(n,n) = \{ A \in \Res_{\mO_{K}/\mO_{F}} \GL_{2n/\OK} : A J_{n} A^{*} = \mu_{n}(A) J_{n}\}
\end{equation*}
where $J_{n} = \bmat & -1_{n} \\ 1_{n} & \emat$, $1_{n}$ is the $n \times n$ identity matrix, and $\mu_n: \Res_{\mO_{K}/\mO_{F}} \GL_{2n/\mO_{K}} \rightarrow \bfG_{m/\mO_{F}}$ is a morphism of $\mO_{F}$-group schemes. Set $G_{n} = \ker \mu_{n}$.
For a $2n\times 2n$ matrix $g$ we will write $a_g, b_g, c_g, d_g$ to be the $n\times n$ matrices defined by $g=\bmat a_g & b_g \\ c_g & d_g \emat.$

Write $P_{n}$ for the standard Siegel parabolic of $G_{n}$, i.e.,
\begin{equation*}
P_{n} = \left\{ g = \bmat a_{g} & b_{g} \\ 0_n & d_{g} \emat \in G_{n}\right\}.
\end{equation*}
We have the Levi decomposition of $P_{n} = M_{P_{n}} N_{P_{n}}$ where
\begin{equation*}
M_{P_{n}} = \left\{ \bmat A & \\ &  \hat{A} \emat : A \in \Res_{\OK/\OF}\GL_{n/\OK} 
\right\}
\end{equation*}
and
\begin{equation*}
N_{P_{n}} = \left\{ \bmat 1_{n} & S \\   & 1_{n} \emat: S \in \Res_{\OK/\OF}\Mat_{n/\OK}, S^{*}= S \right\}.
\end{equation*}

Let $\fN \subset \mO_{F}$ be an ideal.   For $v \in \bff$ 
 we set
\begin{equation*}
\mK_{0,n,\up}(\fN) = \{ g \in G_{n}(F_{\up}): a_{g}, b_{g}, d_{g} \in \Mat_{n}(\mO_{K,\up}), c_{g} \in \Mat_{n}(\fN \mO_{K,\up})\}
\end{equation*}
and
\begin{equation*}
\mK_{1,n,\up}(\fN) = \{ g \in \mK_{0,n,\up}(\fN): a_{g} - 1_{n} \in \Mat_{n}(\fN \mO_{K,\up})\}.
\end{equation*}
We put $\mK_{0,n,{\bf f}}(\fN) = \prod_{\up \in \bff} \mK_{0,n,\up}(\fN)$ and similarly for $\mK_{1,n,{\bf f}}(\fN)$. Set
\begin{equation*}
\mK_{0,n,\bfa}^{+} = \left\{ \bmat A & B \\ - B & A\emat \in G_{n}(\bfR^{\bf a}) : A, B \in \GL_{n}(\bfC^{\bf a}), A A^{*} + BB^{*} = 1_{n}, AB^{*} = B A^{*} \right\}
\end{equation*}
and $\mK_{0,n,\bfa}$
 to be the subgroup of $G_{n}(\bfR^{\bfa})$ generated by $\mK_{0,n,\bfa}^{+}$ and $J_{n}$. Set
$\mK_{0,n}(\fN) = \mK_{0,n,\bfa}^{+} \mK_{0,n,\bff}(\fN)$ and $\mK_{1,n}(\fN) = \mK_{0,n,\bfa}^{+} \mK_{1,n,\bff}(\fN)$.

%
%
%
%
%

\subsection{Automorphic forms}
Define
\begin{equation*}
\bfH_{n}= \{Z \in \Mat_{n}(\bfC): - i1_{n}(Z - Z^{*}) > 0 \}.
\end{equation*}
For $g_{\bfa} = (g_{v})_{v \in \bfa} \in G_{n}(\bfR^{\bfa})$ and $Z = (Z_{v})_{v \in \bfa} \in \bfH_{n}^{\bfa}$,
set $j(g_{\bfa}, Z) = (j(g_{v}, Z_{v}))_{v \in \bfa}$ where $j(g_{v}, Z_{v}) = \det(c_{g_{v}}Z_{v} + d_{v})$.

Let $\mK$ be an open compact subgroup of $G_{n}(\bfA_{F, \bf f})$. For $k, \nu \in \bfZ^{\bf a}$ let $\mM_{n,k,\nu}(\mK)$ denote the $\bfC$-space of functions $f: G_{n}(\bfA_{F}) \rightarrow \bfC$ satisfying the following:
\begin{itemize}
\item[(i)] $f(\gamma g) = f(g)$ for all $\gamma \in G_{n}(F), g \in G_{n}(\bfA_{F})$,
\item[(ii)] $f(g \kappa) = f(g)$ for all $\kappa \in \mK, g \in G_{n}(\bfA_{F})$,
\item[(iii)] $f(gu) = (\det u)^{-\nu} j(u, i 1_{n})^{-k} f(g)$ for all $g \in G_{n}(\bfA_{F}), u \in \mK_{0,n,\bfa}$,
\item[(iv)] $f_c(Z) = (\det g_{\bfa})^{\nu} j(g_{\bfa}, i 1_{n})^{k} f(g_{\bfa}c)$ is a
holomorphic function of $Z = g_{\bfa} i 1_{n} \in \bfH_{n}^{\bfa}$ for every $c \in G_{n}(\bfA_{F, {\bf f}})$ where $g_{\bfa}\in G_{n}(\bfR^{\bfa})$.
\end{itemize}

If $n=1$, one also needs to require that the $f_c$ in (iv) be holomorphic at cusps (cf. \cite[p. 31]{ShimuraArithmeticity} for what this means). We let $\mS_{n,k,\nu}(\mK)$ denote the space of cusp forms in $\mM_{n,k,\nu}(\mK)$ (cf. \cite[p. 33]{ShimuraArithmeticity}).
If $\nu = 0$ we drop it from the notation.

Let $\psi$ be a Hecke character of $K$ of conductor dividing $\fN$.  For $k, \nu \in \bfZ^{\bfa}$ we set
\begin{align*}
\mM_{n,k,\nu}(\fN, \psi):=\{&f \in \mM_{n, k, \nu}(\mK_{1, n, {\bf f}}(\fN)) \mid f(g\kappa) = \psi_{\fN}(\det (a_{\kappa}))^{-1} f(g), \\ &g\in G_n(\AF), \kappa \in \mK_{0, n,  {\bf f}}(\fN)\}.
\end{align*}
If $\psi$ is trivial we drop it from the notation.

Every automorphic form $f \in \mM_{n,k,\nu}(\mK)$ has a Fourier expansion which we now discuss. We first define $e_{\bfA_{F}}$ as follows.  Let $\alpha = (\alpha_{\up}) \in \bfA_{F}$, where $\up$ runs over places of $F$.  We set $e_{\up}(\alpha_{\up}) = e(\alpha_{\up})$ for $\up \in \bfa$ and write $e_{\bfa}(\alpha) = e\left(\sum_{\up \in \bfa} \alpha_{\up}\right)$.  If $\up$ is finite, we set $e_{\up}(\alpha_{\up}) = e(-y)$ where $y \in \bfQ$ is chosen such that $\Tr_{F_{\up}/\bfQ_{p}}(\alpha_{\up}) - y \in \bfZ_{p}$ if $\up \mid p$.   We then set $e_{\bfA_{F}}(\alpha) = \prod_{\up} e_{\up}(\alpha_{\up})$. For every $q \in \GL_{n}(\bfA_{K, \bfk})$ and $h \in S_{n}(F)$ there exist complex numbers $c_{f}(h,q)$ such that one has (a Fourier expansion of $f$)
\begin{equation*}
f\left(\bmat 1_{n} & \sigma \\ & 1_{n} \emat \bmat q \\ & \hat{q} \emat \right) = \sum_{h \in S_{n}(F)} c_{f}(h,q) e_{\bfA_{F}}(\tr h \sigma)
\end{equation*}
for every $\sigma \in S_{n}(\bfA_{F})$ where
\begin{equation*}
S_{n} = \{h \in \Res_{\mO_{K}/\mO_{F}} \Mat_{n/\mO_{K}}: h^{*} = h\}.
\end{equation*}
One can see \cite[Chapter III, Section 18]{ShimuraCBMS97} for further discussion of such Fourier expansions.

Our next step is to define what we mean by a congruence between two automorphic forms.  One knows (see e.g., \cite[Theorem 3.3.1]{BumpCambridgeUniversityPress88}) that for any finite subset $\mB$ of $\GL_{n}(\bfA_{K,\bfk})$
 of cardinality $h_{K}$ with the property that the canonical projection $c_{K}: \bfA_{K}^{\times} \rightarrow {\rm Cl}_{K}$ restricted to $\det \mB$ is a bijection, the following decomposition holds
\begin{equation*}
\GL_{n}(\bfA_{K}) = \bigsqcup_{b \in \mB} \GL_{n}(K) \GL_{n}^{+}(K_{\bfb}) b \GL_{n}(\widehat{\mO}_{K}).
\end{equation*}
Here $+$ refers to a totally positive determinant.
Such a set $\mB$ will be called a {\it base}.

Let $\Cl_K^-$ denote the subgroup of $\Cl_K$ on which the non-trivial element of $\Gal(K/F)$ acts by inversion.

\begin{lemma}\label{corl:nicebase} If  $(h_{K},2n) = 1$ and $\Cl_K^-=\Cl_K$ then  a base $\mB$ can be chosen so that all $b \in \mB$ (and hence also $p_{b} := \bmat b & \\ & \hat{b} \emat$)  are scalar matrices and $b b^{*} = b^{*} b = 1_{n}$ (and hence also $p_bp_b^* = p_b^*p_b=1_{2n}$).
\end{lemma}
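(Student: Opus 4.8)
The plan is to build the base $\mB$ by hand starting from the class group $\Cl_K$, exploiting the two hypotheses to arrange that the chosen idele representatives are simultaneously scalar and unitary. Recall that since the base must have cardinality $h_K$ and $c_K$ must restrict to a bijection on $\det\mB$, the essential content is: choose, for each ideal class, an idele $t \in \AKu$ representing it whose archimedean component can be taken to be $1$ (or otherwise normalized) and whose finite part can be embedded diagonally as a scalar matrix $b = t\cdot 1_n$ with $bb^* = b^*b = 1_n$. The identity $b = t 1_n$ automatically gives a scalar matrix; the conditions $bb^* = 1_n$ and $p_b p_b^* = 1_{2n}$ then reduce to the single scalar identity $t\bar t = 1$ at every finite place, i.e.\ we need the class-group representatives to be realizable by \emph{norm-one} ideles.

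First I would use the hypothesis $\Cl_K^- = \Cl_K$: the nontrivial element $\rho$ of $\Gal(K/F)$ acts on $\Cl_K$ by inversion, which says precisely that for every ideal class $\mathfrak{c}$ one has $\mathfrak{c}^\rho = \mathfrak{c}^{-1}$, equivalently $\mathfrak{c}\,\mathfrak{c}^\rho = \mathfrak{c} N_{K/F}(\mathfrak{c})$ is trivial — so every class $\mathfrak{c}$ contains an ideal $\fa$ with $\fa \bar\fa$ principal, say $\fa\bar\fa = (\mu)$ with $\mu \in F^\times$ (and one can arrange $\mu$ totally positive after adjusting by the archimedean components, since $h_K$ classes are being represented). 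Picking a finite idele $t$ with associated ideal $\fa$, the product $t\bar t$ then generates the principal ideal $(\mu)$ at the finite places. Second, I would correct $t$ by a unit idele to make $t\bar t$ equal to $\mu$ on the nose: this is where $(h_K, 2n) = 1$ does \emph{not} yet enter — rather, one needs that the ambiguity in $t$ (an element of $\widehat{\mO}_K^\times$) can absorb the discrepancy between $t\bar t$ and $\mu$, which follows because $t\bar t \mu^{-1} \in \widehat{\mO}_K^\times$ and the norm map $\widehat{\mO}_K^\times \to \widehat{\mO}_F^\times$ restricted appropriately is surjective on the relevant local unit groups (unramified places split or inert both work; ramified places need a short local check). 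Having arranged $t\bar t = \mu$ with $\mu$ a totally positive unit-adjusted scalar, rescale $t \mapsto t\mu^{-1/2}$ — and \emph{here} is where I expect coprimality of $h_K$ with $2n$ (or at least with $2$) to be needed: extracting a square root of $\mu$ inside the idele class group, or equivalently adjusting so that the resulting scalar matrix still represents the same class in $\Cl_K$ without introducing $2$-torsion obstructions, uses that squaring is invertible on $\Cl_K$. After this rescaling $t\bar t = 1$, so $b := t 1_n$ satisfies $bb^* = b^*b = 1_n$, it is scalar, and $p_b = \diag(b, \hat b) = \diag(b, \bar b^{-1}) = \diag(b, b)$ is likewise scalar and unitary.

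Finally I would verify that the set $\mB = \{b_\mathfrak{c}\}$ so produced is genuinely a base in the sense defined before the lemma: it has cardinality $h_K$ by construction (one representative per class), and $\det b_\mathfrak{c} = t_\mathfrak{c}^n$, so I must check that $\mathfrak{c} \mapsto c_K(t_\mathfrak{c}^n)$ is still a bijection $\Cl_K \to \Cl_K$ — i.e.\ that raising to the $n$-th power permutes the class group. This is exactly where the full strength of $(h_K, 2n) = 1$ is used (in particular $(h_K, n) = 1$), since $x \mapsto x^n$ is then an automorphism of the finite abelian group $\Cl_K$; composing the original bijection $\mathfrak{c} \mapsto c_K(t_\mathfrak{c})$ with this automorphism keeps it a bijection, and the disjoint-union decomposition of $\GL_n(\AK)$ then follows from the cited general fact. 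The main obstacle, I expect, is the square-root step: controlling that $\mu$ (a totally positive element of $F^\times$, well-defined modulo norms of units and modulo squares coming from the choice of $t$) can be normalized to a square in the idele class group so that dividing $t$ by one of its square roots lands in the same $\Cl_K$-class — this is precisely what forces the hypothesis $(h_K, 2) = 1$, and getting the bookkeeping of which archimedean/finite adjustments are allowed at each stage is the delicate part; everything else is a sequence of standard weak/strong approximation manipulations.
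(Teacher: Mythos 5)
There is a genuine gap at your square-root step, and a secondary one just before it. To turn your representative $t$ (with $t\ov{t}=\mu$, $\mu\in F^{\times}$ totally positive) into a norm-one idele in the \emph{same} ideal class, you must multiply $t$ by some $\gamma\in K^{\times}$ with $\gamma\ov{\gamma}=\mu^{-1}$; in other words you need $\mu$ to be a global norm from $K^{\times}$. Writing this as ``rescale $t\mapsto t\mu^{-1/2}$'' hides the problem: $\mu^{1/2}$ need not exist in $K$, nor even in every completion $K_w$ (total positivity only controls the archimedean places; at inert and ramified finite places being a local norm is an index-two condition that an arbitrary $\mu$ has no reason to satisfy). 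Neither $(h_K,2)=1$ nor ``squaring is invertible on $\Cl_K$'' produces such a $\gamma$: oddness of $h_K$ lets you take square roots of \emph{ideal classes}, not of the element $\mu\in F^{\times}$. The earlier step, in which you absorb $t\ov{t}\mu^{-1}\in\widehat{\mO}_{F}^{\times}$ into $u\ov{u}$ for a unit idele $u$, is also not automatic: at ramified places the norm map on local units is not surjective, so this ``short local check'' can fail.

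The paper's proof avoids both issues by using the two hypotheses differently. Since $2\nmid h_K$, the squares $\alpha_1^2,\dots,\alpha_{h_K}^2$ of any set of idele representatives of $\Cl_K$ again form a complete set of representatives; and $\Cl_K^-=\Cl_K$ gives $\ov{\alpha}_i^{-1}=\alpha_i k_i$ with $k_i\in K^{\times}$, so the class of $\alpha_i^2$ equals the class of $\alpha_i\ov{\alpha}_i^{-1}$. The idele $\alpha_i\ov{\alpha}_i^{-1}$ satisfies $t\ov{t}=1$ \emph{identically}, with no square root of a field element and no local norm computation required. Your reduction of the matrix conditions to the scalar condition $t\ov{t}=1$, and your use of $(h_K,n)=1$ to keep $\mathfrak{c}\mapsto c_K(\det b)=c_K(t^n)$ bijective, both match the paper; it is only the production of the norm-one representative that must be replaced by the $\alpha\ov{\alpha}^{-1}$ construction.
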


\begin{proof} This is just a modification of the proof of \cite[Corollary 3.9]{Klosin15}. Identifying $\Cl_K$ with a quotient of $\AK^{\times}/K^{\times}$ and using the fact that raising to the power $n$ acts as automorphism on $\Cl_K$  since $(h_{K},n)=1$, it is enough to show that every element of $\Cl_K$ can be written in the form $\alpha\ov{\alpha}^{-1}$ for some $\alpha \in \AK^{\times}$. Let $\alpha_1, \dots, \alpha_{h_K}$ be representatives of $\Cl_K$. Since $2\nmid h_K$, $\alpha_1^2, \dots, \alpha_{h_K}^2$ also form a complete set of representatives. By our assumption that $\Cl_K=\Cl_K^-$ we get that for each $1 \leq i \leq h_K$ we have $\ov{\alpha}_i^{-1} = \alpha_i k_i$ for some $k_i \in K^{\times}$.   \end{proof}

\begin{rem} The assumption $\Cl_K=\Cl_K^-$ is satisfied if the image of the canonical map  $\Cl_K \to \Cl_F$ given by $\fa \mapsto \fa \ov{\fa}$ is trivial, so   in particular when $h_F=1$. 
Indeed, then $\ov{\fa} = \fa^{-1} k$ for some $k \in F$. The assumption $\Cl_K^-=\Cl_K$ is therefore weaker for allowing $k \in K$ and in fact since $h_K$ is odd it  is equivalent to the assumption that every ideal of $\OF$ capitulates in $\OK$. \end{rem}

%
%


\begin{definition}  Let $\mB$ be a base.  We say $\mB$ is {\it admissible} if
 all $b \in \mB$ are scalar matrices with $b b^{*} = 1_{n}$ (and hence also $p_{b}$ are scalar matrices with $p_{b} p_{b}^{*} = 1_{2n}$). Furthermore given a prime $\ell$ we say that $\mB$ is \emph{$\ell$-admissible}, if it is admissible and
 for every $b \in \mB$ there exists a rational prime $p \nmid 2 D_{K} \ell$ such that $b_{\fq} = 1_{n}$ for all $\fq \nmid p$ and $b_{\bfb} = 1_{n}$.
The set of primes $\fp$ for which $b_{\fq} \neq 1_{n}$ with $\fq \mid \fp$ will be called the support of $\mB$.
\end{definition}

Note that if $(h_K,2n)=1$ and $\Cl_K^-=\Cl_K$, then the Tchebotarev density theorem combined with Lemma \ref{corl:nicebase} implies that for every $\ell \nmid D_K$ an $\ell$-admissible base always exists.

From now on for the rest of the article for every rational prime $\ell$ we fix compatible embeddings $\ov{\bfQ} \hookrightarrow \ov{\bfQ}_{\ell} \hookrightarrow \bfC$.
\begin{definition} \label{def of cong} Let $\ell$ be a rational prime and $\mO$ the ring of integers in some algebraic extension $E$ of $\bfQ_{\ell}$. 

\begin{enumerate}
\item Let $f \in \mM_{n,k, \nu}(\fN,\psi)$.  We say $f$ has {\it $\mO$-integral} Fourier coefficients (with respect to $\mB$) if there exists a base $\mB$ so that for all $b \in \mB$ and all $h \in S_{n}(F)$ we have $e_{\bfa}(-i \tr h_{\bfa})c_{f}(h,b) \in \mO$.
\item Let $f, g \in \mM_{n,k, \nu}(\fN,\psi)$ and suppose that $f$ and $g$ both have $\Oo$-integral Fourier coefficients with respect to a base $\mB$. Let $E'$ be a finite extension of $\bfQ_{\ell}$ with $E' \subset E$ and write $\Oo'$ for its ring of integers and $\varpi$ for a uniformizer. We say $f$ is congruent to $g$ modulo $\varpi^{n}$ if for all $b \in \mB$ and all $h \in S_{n}(F)$ we have $e_{\bfa}(-i \tr h_{\bfa})c_{f}(h,b) -e_{\bfa}(-i \tr h_{\bfa})c_{g}(h,b) \in \varpi^n\Oo'$.  We denote this by $f \equiv g \pmod{\varpi^{n}}$.
\end{enumerate}
\end{definition}

\begin{rem} \label{not in number field} Our definition of a congruence between automorphic forms on $G_n(\AF)$ allows for the situation when one knows that each individual Fourier coefficient of $f$ is an algebraic integer, but one does not know if collectively they generate a number field. This is certainly known for modular forms on $\GL_2$ due to a straightforward connection between Hecke eigenvalues and Fourier coefficients. The connection in the case of higher rank groups is much more delicate. \end{rem}

\begin{rem} If $f$ and $g$ are congruent (mod $\varpi^n$) with respect to one
admissible base, say $\mB$, and $\mB'$ is another admissible base such that $f$ and $g$ both have $\Oo$-integral Fourier coefficients with respect to $\mB'$, then we have $e_{\bfa}(-i \tr h_{\bfa})c_{f}(h,b) \equiv e_{\bfa}(-i \tr h_{\bfa})c_{g}(h,b) \pmod{\varpi^{n}}$ for all $b\in \mB'$ and all $h \in S_n(F)$. This is easily proved using admissibility and a generalization (to the setting of totally real $F$ and rank $n$) of formula (5.4) in \cite{Klosin15} using the references indicated in the proof of Lemma 5.5 in \cite{Klosin15}. Formula (5.4) in [loc.cit.] can also be used to show that if $f$ and $g$ have $\Oo$-integral Fourier coefficients with respect to one admissible base, then they have $\Oo$-integral Fourier coefficients with respect to all admissible bases. Hence we see that the congruence relation between automorphic forms is transitive, and thus an equivalence relation. \end{rem}

\subsection{$L$-functions}

We collect here the definitions of the $L$-functions needed for this paper.
Let $F$ be a number field and, as before, let $\bff_{F}$ denote the set of all finite places of $F$.

Given an integral ideal $\fN$ of $\OF$ and an Euler product
\begin{equation*}
L(s) = \prod_{\up \in \bff_{F}} L_{\up}(s),
\end{equation*}
we write
\begin{equation*}
L^{\fN}(s) = \prod_{\substack{\up \in \bff_{F} \\ \up \nmid \fN}} L_{\up}(s)
\end{equation*}
and
\begin{equation*}
L_{\fN}(s) = \prod_{\substack{\up \mid \fN}} L_{\up}(s).
\end{equation*}

Let $\chi$ be a Hecke character of $F$.  We set
\begin{equation*}
L(s,\chi) = \prod_{\substack{\up \in \bff_{F}\\ v \nmid \cond (\chi)}} (1 - \chi(\varpi_{\up}) |\varpi_{\up}|_{\up}^{s})^{-1},
\end{equation*}
where we identify a uniformizer $\varpi_v$ with its image in $\AF^{\times}$.

Now, let $F$ be a totally real field, $K$ an imaginary quadratic extension of $F$ and $\psi$ and $\chi$ Hecke characters of $K$.
Let $f \in \mS_{n,k,\nu}(\fN, \psi)$ be an eigenform with parameters $\lambda_{\up,1}, \dots, \lambda_{\up,n}$ determined by the Satake isomorphism \cite[Section 20.6]{ShimuraArithmeticity}.  The standard $L$-function of $f$ is defined as in \cite[p. 171]{ShimuraArithmeticity} by
\begin{equation*}
L(s,f,\chi;\st) = \prod_{\substack{\up \in \bff\\ \up \nmid \mO_{F} \cap \cond(\chi)}} L_{\up}(s, f,\chi;\st)
\end{equation*}
where for $\up \in \bff$ and $\up \nmid \fN$ we set
\begin{align*}
L_{\up}&(s,f,\chi;\st)\\
 &= \left\{\begin{array}{ll}  \prod_{i=1}^{n} \left [ (1 - \lambda_{\up,i} \chi(\varpi_w)|\varpi_w|_{w}^{s-n+1})(1 - \lambda_{\up,i}^{-1}\chi(\varpi_w)|\varpi_w|_{w}^{s-n})\right ]^{-1} & K\otimes_F F_v \hspace{2pt} \textup{is a field} \\
    \prod_{i=1}^{2n} \left [ (1 - \lambda_{\up,i}^{-1} \chi(\varpi_{v_1})|\varpi_{v}|_{v}^{s-2n})(1 - \lambda_{\up,i}\chi(\varpi_{v_2})|\varpi_{v}|_{v}^{s+1})\right ]^{-1} & K\otimes_F F_v \cong F_v \times F_v  \end{array}\right.
\end{align*}
and when $\up \mid \fN$ we set
\begin{align*}
L_{\up}&(s,f,\chi;\st)\\
    &= \left\{ \begin{array}{ll} \prod_{i=1}^{n} \left[1 - \lambda_{\up,i} \chi(\varpi_w)|\varpi_{w}|_{w}^{s-n+1}\right]^{-1} & K\otimes_F F_v \hspace{2pt} \textup{is a field} \\
        \prod_{i=1}^{n} \left[(1 - \lambda_{\up,i} \chi(\varpi_{w_1})|\varpi_{w_1}|_{w_1}^{s-n+1})(1 - \lambda_{\up,n+i} \chi(\varpi_{w_2})|\varpi_{w_2}|_{w_2}^{s-n+1})\right]^{-1} &K\otimes_F F_v \cong F_v \times F_v. \end{array} \right.
\end{align*}
Here $\varpi_v$ denotes a uniformizer of $F_v$, $\varpi_w$ denotes a uniformizer of $K\otimes F_v$ if the latter is a field and $\varpi_{v_1}, \varpi_{v_2} \in K \otimes_F F_v$ correspond to $(\varpi_v,1)$ and $(1, \varpi_v)$ respectively under the isomorphism $K\otimes_F F_v \cong F_v \times F_v$ if such an isomorphism holds. Here again we identify $K \otimes_F F_v$ with its image in $\AK^{\times}$.

\section{An inner product relation}

Fix a totally real extension $F/\bfQ$ and an imaginary quadratic extension $K/F$.
Let $d = [F:\bfQ]$.  In this section we generalize a certain Rankin-Selberg type formula which for the case of $F=\bfQ$ was derived in \cite[Section 7]{Klosin15}. Since all the calculations carried out in [loc.cit.] carry over without difficulty to the general case,  we include here only the necessary setup as well as indicate where changes need to be made and  refer the reader to [loc.cit.] for a less condensed version (see also \cite{Bouganis15preprint}).

\subsection{Eisenstein series}

In this section we define the hermitian Siegel Eisenstein series and recall its properties needed to prove our congruence result.

Let $\fN \subset \mO_{F}$ be an ideal. Let $X_{m,\fN}$ be the set of Hecke characters $\psi'$ of $K$   of conductor dividing $\fN$ satisfying
\begin{equation}\label{eqn:7.4}
\psi'_{\bfa}(x) = (x/|x|)^{m}
\end{equation}
for $m \in \bfZ^{{\bf a}}$.  Here and later we regard Hecke characters of K as characters of $(\textup{Res}_{\OK/\OF}\mathbf{G}_{m/\OK})(\AF)$.
Let $\psi \in X_{m, \fN}$.
Recall we set $\psi_{\fN} = \prod_{\up \mid \fN} \psi_{\up}$.

For a place $\up$ of $F$ and an element $p = \bmat a_{p} & b_{p} \\ 0 & d_{p} \emat \in P_n(F_{\up})$, set
\begin{equation*}
\delta_{P_{n},\up}(p) = |\det (d_{p} d^{*}_{p})|_{v}
\end{equation*}
and $\delta_{P_{n}} = \prod_{\up} \delta_{P_{n},\up}$.  Set $\mu_{P_{n}} = \prod_{\up} \mu_{P_{n},\up}: G_n(\AF) \to \bfC$
to be a function vanishing outside of $P_n(\AF)\mK_{0,n}(\fN)$  where for $p_{\up} \in P_n(F_v)$
and $k_v \in \mK_{0,n,v}(\fN)$ we set
\begin{equation*}
\mu_{P_{n},\up}(p_{\up} k_{\up}) = \left\{ \begin{array}{ll} \psi_{\up}(\det d_{q_{\up}})^{-1} &  v \in \bff, \up \nmid \fN \\
        \psi_{\up}(\det d_{q_{\up}})^{-1} \psi_{\up} (\det d_{k_{\up}}) & v \in \bff, \up \mid \fN \\
        \psi_{\bfa}(\det d_{q_{\bfa}})^{-1} j(k_{\bfa}, i)^{-m} & \up \in \bfa. \end{array}\right.
\end{equation*}

The hermitian Siegel Eisenstein series  of weight $m \in \bfZ^{\bf a}$, level $\fN$, and character $\psi$ is defined by
\begin{equation*}
E(g,s,m,\psi, \fN) = \sum_{\gamma \in P_{n}(F)\backslash G_{n}(F)} \mu_{P_{n}}(\gamma g) \delta_{P_{n}}(\gamma g)^{-s}
\end{equation*}
for $\textup{Re}(s) >>0$. The
meromorphic  continuation of $E(g,s,m,\psi, \fN)$ to all $s \in \bfC$ is given by Shimura \cite[Proposition 19.1]{ShimuraCBMS97}.
We will make use of the following normalized Eisenstein series
\begin{equation*}
D(g,s,m,\psi,\fN) = \prod_{j=1}^{n} L^{\fN}(2s-j+1,\psi_{F}\chi_{K}^{j-1}) E(g,s,m,\psi,\fN)
\end{equation*}
where $\psi_{F}$ denotes the restriction of $\psi$ to $\bfA_{F}^{\times}$.

Let $\eta_{{\bff}} \in G_{n}(\bfA_{F})$ be the matrix with the infinity components equal to $1_{2n}$ and finite components equal to $J_{n} = \bmat & -1_{n} \\1_{n} \emat$. Given a function $f: G_{n}(\bfA_{F}) \rightarrow \bfC$, set $f^{*}(g) = f(g \eta_{\bff}^{-1})$.  If $f \in \mM_{n,k}(\mK)$, we have $f^{*} \in \mM_{n,k}(\eta_{\bff}^{-1} \mK \eta_{\bff})$.

Write
\begin{equation*}
D^{*}\left(\bmat q & \sigma q \\ & \hat{q} \emat, n - m/2, m, \psi, \fN \right) = \sum_{h \in S_{n}(F)} c_{D^{*}}(h,q) e_{\bfA_{F}}(h \sigma).
\end{equation*}
We have the following result giving the integrality of the Fourier coefficients of $D^{*}$.

\begin{prop}\label{thm:eiscoeffs} Let $\ell$ be a prime and assume $\ell \nmid D_{K} \Nm_{F/\bfQ}(\fN)(n-2)!$, $(h_K, 2n)=1$, and that $\Cl_K^-=\Cl_K$. Let $\mB$ be an $\ell$-admissible base  whose support is relatively prime to $\cond(\psi)$.   There exists a finite extension $E$ of $\bfQ_{\ell}$ so that
\begin{equation*}
\pi^{-n(n+1)d/2} e_{\bfa}(-i \tr h_{\bfa}) c_{D^{*}}(h,b) \in \mO_{E}
\end{equation*}
for all $h \in S_{n}(F)$ and all $b \in \mB$.
\end{prop}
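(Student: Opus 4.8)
The whole statement should follow from Shimura's explicit formula for the Fourier coefficients of a hermitian Siegel Eisenstein series, the point being that replacing $\bfQ$ by a totally real $F$ introduces only a product over the $d$ archimedean places of $F$. So the strategy is: cite Shimura's formula for $c_{D^{*}}(h,q)$ at the arithmetic point $s=n-m/2$, split it into its archimedean and its finite parts, and verify $\ell$-integrality of each after the three normalizations built into the statement — multiplication by the $L$-factors defining $D=\prod_{j=1}^{n}L^{\fN}(2s-j+1,\psi_{F}\chi_{K}^{j-1})E$, multiplication by $e_{\bfa}(-i\tr h_{\bfa})$, and division by $\pi^{n(n+1)d/2}$. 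For $F=\bfQ$ (and $n=2$) this is precisely the computation carried out in \cite[\S 7]{Klosin15}, and every local computation there goes through verbatim; I would therefore present the proof as that reduction, recording only the places where the totally real field or the general rank $n$ changes anything.

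First I would recall from \cite[Ch.~III]{ShimuraCBMS97} and \cite{ShimuraArithmeticity} (see also \cite{Bouganis15preprint}) the explicit expression for $c_{D^{*}}(h,q)$, the $h$-th Fourier coefficient of the twist $D^{*}$ of $D$, at $s=n-m/2$. Shimura's results show the relevant coefficients are supported on positive semidefinite $h$ and that for such $h$ the coefficient factors as an archimedean term (a product over $v\in\bfa$ of values of the confluent hypergeometric function attached to $\U(n,n)(\bfR)$) times a finite term (a product over $v\in\bff$ of local Siegel series, all but finitely many equal to $1$ for fixed $h$). The purpose of the normalization $D=\prod_{j=1}^{n}L^{\fN}(2s-j+1,\psi_{F}\chi_{K}^{j-1})E$ is to clear the denominators of the local Siegel series at the good finite places $v\nmid\fN D_{K}$, leaving there local factors that are $\ell$-integral polynomial expressions with coefficients in $\bfZ[\psi(\varpi_{v})]$; thus the contribution from those places, at $s=n-m/2$, lies in $\mO_{E}$ as soon as $E/\bfQ_{\ell}$ contains the finitely many values of $\psi$ that occur. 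For the bad places $v\mid\fN$ the local factor — incorporating the effect of the $\eta_{\bff}$-translation implicit in $D^{*}$ — has denominator dividing a power of $\Nm_{F/\bfQ}(\fN)$, using $\cond(\psi)\mid\fN$ exactly as in \cite[\S 7]{Klosin15}, hence is $\ell$-integral because $\ell\nmid\Nm_{F/\bfQ}(\fN)$; the places ramified in $K/F$ cause no trouble since $\ell\nmid D_{K}$. Finally, since $\mB$ is $\ell$-admissible, each $b$ is a scalar matrix with $b_{\bfb}=1_{n}$ and $b_{\fq}=1_{n}$ for every prime $\fq$ not lying over a fixed rational prime $p\nmid 2D_{K}\ell$, and the support of $\mB$ is prime to $\cond(\psi)$, so the places over $p$ are good for both $K/F$ and $\psi$; therefore $c_{D^{*}}(h,b)$ differs from $c_{D^{*}}(h,1)$ only in its local factors over $p$, which are good-place Siegel series values at a place not dividing $\ell$ and hence $\ell$-integral by the previous sentence.

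It remains to treat the archimedean term. After multiplication by $e_{\bfa}(-i\tr h_{\bfa})$, which cancels the exponential $e^{-2\pi\tr h}$, this term equals $\pi^{n(n+1)d/2}$ times a polynomial in the entries of $h$ whose coefficients have denominators dividing a power of $(n-2)!$ — one factor $\pi^{n(n+1)/2}$ and one such polynomial per archimedean place. This is exactly where the exponent $n(n+1)d/2$ and the hypothesis $\ell\nmid(n-2)!$ enter, and it comes from Shimura's evaluation of the confluent hypergeometric function at a holomorphic point. Combining this with the finite part and enlarging $E$ once more to absorb the finitely many remaining character values, one obtains $\pi^{-n(n+1)d/2}e_{\bfa}(-i\tr h_{\bfa})c_{D^{*}}(h,b)\in\mO_{E}$ for all $h\in S_{n}(F)$ and all $b\in\mB$, as claimed.

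The one genuinely substantive ingredient — and the main obstacle — is this archimedean special-value computation: pinning down both the power of $\pi$ and the precise factorial in the denominator of the confluent hypergeometric value at $s=n-m/2$. Everything else is a place-by-place bookkeeping that is identical to the $F=\bfQ$ case already done in \cite{Klosin15}, so in the write-up I would state that computation as a citation to Shimura and keep the rest brief.
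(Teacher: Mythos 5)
Your overall strategy is the paper's: reduce to the explicit $F=\bfQ$ computation of \cite[Theorem 7.8, Corollary 7.11]{Klosin15}, and you correctly isolate the first of the two changes that reduction requires, namely that the archimedean factor now contributes one copy of $\pi^{n(n+1)/2}$ (and one factorial denominator dividing a power of $(n-2)!$) per place in $\bfa$, whence the exponent $n(n+1)d/2$; the paper gets this from \cite[(18.11.5)]{ShimuraCBMS97}.

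There is, however, one genuine gap in your claim that everything else is ``place-by-place bookkeeping identical to the $F=\bfQ$ case.'' Your description of the normalizing factors $\prod_{j=1}^{n}L^{\fN}(2s-j+1,\psi_{F}\chi_{K}^{j-1})$ as serving only to clear the denominators of the good local Siegel series is accurate only for nondegenerate $h$. For $h$ positive semidefinite of rank $r<n$ (which you must treat, since the statement is for all $h\in S_n(F)$), not all of these factors are consumed: the coefficient $c_{D^{*}}(h,b)$ retains uncancelled global values $L(j,\psi')$ of Hecke $L$-functions of $F$ at negative integers $j$, and their membership in $\mO_E$ is a separate input. In \cite{Klosin15} this input is \cite[Corollary 5.13]{Washingtonbook}, a statement about generalized Bernoulli numbers of Dirichlet characters that is specific to $F=\bfQ$ and does not transport to a general totally real field. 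One needs instead the integrality of Hecke $L$-values of $F$ at negative integers (Shintani/Deligne--Ribet; the paper cites \cite[Theorem 1, p.~104]{HidaLondonMathSoc93}). The gap is repaired by a citation, so the proof is salvageable as written, but this is precisely the second of the two changes the reduction requires and it should be stated rather than absorbed into ``identical bookkeeping.''
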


\begin{proof}  This is proved as in \cite[Theorem 7.8]{Klosin15} and \cite[Corollary 7.11]{Klosin15}.  There are only two significant changes to the proof given in \cite{Klosin15}. The first is
the power of $\pi$ has an additional $d=[F: \bfQ]$, which follows immediately from \cite[Equation 18.11.5]{ShimuraCBMS97}. The second is the reference that $L(n,\psi')$ lies in the ring of integers of a finite extension of $\bfQ_{\ell}$ for all $n \in \bfZ_{<0}$ must be changed from \cite[Corollary 5.13]{Washingtonbook} to \cite[Theorem 1, p. 104]{HidaLondonMathSoc93} as our characters are now defined over $F$ instead of $\bfQ$.
\end{proof}

\subsection{Theta series}\label{sec:theta}
Let $k \in \bfZ^{{\bf a}}$ be such that $k_{\up} > 0$ for all $\up \in {\bf a}$ and let $\xi$ be a Hecke character of $K$ with conductor $\ff_{\xi}$ and infinity type $|x|^{t} x^{-t}$ for $t \in \bfZ^{{\bf a}}$ with $t_{\up} \geq -k_{\up}$ for each $\up \in {\bf a}$. In \cite[Section A5]{ShimuraArithmeticity} and \cite[Section A7]{ShimuraCBMS97} Shimura defines a theta series on $G_n(\AQ)$ associated to a quadruple consisting of $k$ (as above), the Hecke character $\xi$, a matrix $r \in \GL_n(\bfA_{K,\bfk})$  and a matrix $\tau \in S^{+}_{n}(F) = \{h \in S_{n}(F)\subset S_n(\AF): h_{\up} > 0 \, \textrm{for all}\, \up \in \bfa\}$ which satisfies the following condition \be \label{reldiff} \{g^* \tau g \mid g \in r\OK^n\} =\fd_{F/\bfQ}^{-1},\ee
 where for an extension $L'/L$ of number fields we will denote by $\fd_{L'/L}$ the relative different of $L'$ over $L$ (see Remark \ref{Relative different}). %
Let $\varphi$ be a Hecke character of $K$ with infinity type $\prod_{\up \in \bfa} \frac{|x_{\up}|}{x_{\up}}$ such that the restriction of $\varphi$ to $\bfA_{F}^{\times}$ equals $\chi_{K}$.  Such a character always exists \cite[Lemma A5.1]{ShimuraArithmeticity}.  Set $\psi' = \xi^{-1} \varphi^{-n}$.
Then the theta series, which we (following Shimura) will denote by $\theta_{\xi}$ (so in particular we will suppress $k$ and $\tau$ from notation) belongs to the space $\mS_{n,t+k+n}(\fN_{\ft},\psi')$ (cf. \cite[Proposition A7.16]{ShimuraCBMS97}; cuspidality follows from our assumption that $k_v>0$ \cite[p. 277]{ShimuraCBMS97}).
Here
\be \label{fNt} \fN_{\ft} = (\ft D_{K/F} \Nm_{K/F}(\ff_{\xi})\cap \fd_{F/\bfQ}^{-1} D_{K/F} \cap \fd_{F/\bfQ}^{-1} \ff_{\xi})\fd_{F/\bfQ}
\ee
is an ideal of $\OF$, where $D_{K/F}$ is the relative discriminant of $K$ over $F$ and $$\ft^{-1}:= \{ g^* \tau^{-1} g \mid g \in \OK\}$$  is a fractional ideal of $F$ (cf. \cite[Section A5.5]{ShimuraArithmeticity} and \cite[Prop. A7.16]{ShimuraCBMS97}).

\begin{rem} \label{Relative different} The theta series considered in \cite{ShimuraCBMS97,ShimuraArithmeticity} are of a more general type and are not required to satisfy (\ref{reldiff}). However, in this article we treat only the case of congruence subgroups of the form $\mK_{1,n}(\fM)$ as opposed to the more general kind of $\mK_{1,n}(\fb^{-1}, \fb \fM)$ considered in [loc.cit.]. The assumption (\ref{reldiff}) implies that for the level of the theta series we have $\fb=1$. Furthermore, we will later choose $r$ and $\tau$ (and hence a theta series associated to them) in such a way the $(\tau,r)$-Fourier coefficient $c_f(\tau,r)$ of a form $f \in \mM_{n,k}(\fM)$ (to which we will construct congruences - cf. Theorem \ref{thmmain}) will be an $\ell$-adic unit. However, it follows from \cite[Proposition 18.3(2)]{ShimuraCBMS97} that $c_f(\tau,r)=0$ if $(r^* \tau r)_v \not\in (\fd_{F/\bfQ}^{-1})_v S_n(\OFv)$ for all $v \in \bff$. From this it is easy to show that for a pair $(\tau,r)$ where $c_f(\tau,r) \neq 0$ (and these are the only pairs we are interested in) one has $\{g^* \tau g \mid g \in r\OK^n\} \subset\fd_{F/\bfQ}^{-1}$, hence the assumption (\ref{reldiff}) is often satisfied.
Let us also remark that the purpose of this restriction is to merely simplify the exposition which would require a great deal of additional notation should we consider the general case. However, the method is entirely general and the main result (Theorem \ref{thmmain}) remains true in the case of $\fb \neq 1$ provided one makes the necessary modifications of level of the forms involved. \end{rem}

\begin{prop}[\cite{Klosin15}, Corollary 7.13]\label{prop:thetacoeffs} Let $k, \xi, r, \tau$ be a quadruple as above determining a theta series $\theta_{\xi}$. Assume that $r_{w} = 1_{n}$ for all $w \mid \ff_{\xi} D_{K} \ell$.  For an $\ell$-admissible base $\mB$ there exists a finite extension $E$ of $\bfQ_{\ell}$ so that $e_{\bfa}(-i \tr h_{\bfa}) c_{\theta_{\xi}}(h,b) \in \mO_{E}$ for all $b \in \mB$ and all $h \in S_{n}(F)$.
\end{prop}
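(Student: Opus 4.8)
The plan is to follow \cite[Corollary 7.13]{Klosin15}, verifying that the passage from $F=\bfQ$ to a general totally real base field is purely notational, exactly as in Proposition \ref{thm:eiscoeffs}. First I would fix the given $\ell$-admissible base $\mB$ and recall from \cite[Section A7]{ShimuraCBMS97} (see also \cite[Section A5]{ShimuraArithmeticity}) the explicit series defining $\theta_{\xi}$: evaluated at $g = \bsmat 1_n & \sigma \\ & 1_n \esmat \bsmat b & \\ & \hat{b} \esmat$ with $b\in\mB$, it is a sum over a lattice coset attached to $r$, $b$ and $\tau$ of a product of the Gaussian archimedean factor, the holomorphic weight-$(t+k+n)$ factor, the additive character in $\sigma$, and the value of the finite part of $\xi$ (times a power of $\varphi$) at the lattice vector. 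Isolating the $h$-th Fourier mode and dividing by the normalization $e_{\bfa}(-i\,\tr h_{\bfa})$ cancels the Gaussian and leaves an elementary constant $\kappa_{t,k,n}$ --- depending only on the weight and on powers of $2$ and $D_K$ --- times a \emph{finite} sum (indexed by the lattice vectors $x$ cut out by $h$, $r$, $b$, $\tau$; finiteness comes from the total positive-definiteness of $\tau$, equivalently from holomorphy of $\theta_{\xi}$) of products of values of the finite parts of the Hecke characters $\xi$ and $\varphi$. In contrast with Proposition \ref{thm:eiscoeffs} for the Eisenstein series, no compensating power of $\pi$ is needed here: once the Gaussian is stripped off, the Fourier coefficients of a theta series are purely arithmetic.

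Next I would note that this expression lies in the number field generated over $\bfQ$ by the values of $\xi$ and $\varphi$ and by roots of unity, hence, through the fixed embedding $\ov{\bfQ}\hookrightarrow\ov{\bfQ}_{\ell}$, inside a finite extension $E$ of $\bfQ_{\ell}$, which produces the field $E$ of the statement. It then remains to check $\ell$-integrality. A Hecke character is $\mO_{K,w}^{\times}$-valued on local units at any finite place $w$ outside its conductor, hence takes $\ell$-adic unit values there; the hypothesis $r_{w}=1_n$ for all $w\mid\ff_{\xi}D_K\ell$, together with the $\ell$-admissibility of $\mB$ (the $b\in\mB$ are scalar matrices with $bb^{*}=1_n$ and the support of $\mB$ is prime to $\ell$), forces every argument at which $\xi$ or $\varphi$ is evaluated in the sum to be a local unit at the primes over $\ell$ and away from the conductors of $\xi$ and $\varphi$. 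Hence each summand is an $\ell$-adic integer, and since $2$, $D_K$ and the weight factor in $\kappa_{t,k,n}$ are all prime to $\ell$, the whole expression lies in $\mO_E$ for every $b\in\mB$ and every $h\in S_n(F)$.

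The main obstacle I anticipate is bookkeeping rather than anything substantive: one must pin down Shimura's normalization of the hermitian theta series over a general totally real field and check that the relative different $\fd_{F/\bfQ}$ entering (\ref{reldiff}) and the level ideal (\ref{fNt}) affect the Fourier coefficients only through $\ell$-adic unit factors, so that the transition from the $F=\bfQ$ computation of \cite{Klosin15} introduces no new denominators --- precisely the analogue of the single genuine change in Proposition \ref{thm:eiscoeffs}, namely an extra $d=[F:\bfQ]$ in a power of $\pi$ (absent here) and a change of reference for the integrality of Hecke $L$-values.
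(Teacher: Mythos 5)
Your proposal takes essentially the same route as the paper: the paper simply invokes \cite[Corollary 7.13]{Klosin15} and remarks that the Shimura results it rests on remain valid over a general totally real $F$, so the proof goes through unchanged. Your unpacking of the internal mechanics of that argument (stripping the Gaussian via the normalization, reducing to a finite sum of Hecke character values at local units, and using $r_w=1_n$ at the bad places together with $\ell$-admissibility of $\mB$) is consistent with the cited proof and with the way the analogous Eisenstein-series statement (Proposition \ref{thm:eiscoeffs}) is handled.
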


\begin{rem} One should note that \cite[Corollary 7.13]{Klosin15} is given for the case $F=\bfQ$, but the results of Shimura it depends on are valid in our case.  The proof given in \cite{Klosin15} then goes through.
\end{rem}

\subsection{Inner product} \label{Inner product}
Let $k,\xi, r,\tau$ and hence $\theta_{\xi}$ be fixed as above. We fix $\fN$ an ideal of $\OF$ with $\fN_{\ft} \mid \fN$ so that $\theta_{\xi} \in \mS_{n, t+k+n}(\fN, \psi')$.
We now give the inner product relation that forms the key to constructing the congruences in this paper.

Set
\begin{equation*}
\Gamma((s)) = (4 \pi)^{-nd\frac{2s + k+ l}{2}} \Gamma_{n}\left(s + (k+ l)/2\right)^{d}
\end{equation*}
where
\begin{equation*}
\Gamma_{n}(s) = \pi^{n(n-1)/2} \prod_{j=0}^{n-1} \Gamma(s - j).
\end{equation*}
Let $Y_{\textrm{re}} = \{h \in \Mat_{n}(\bfC_{\bfa}): h = h^{*}\}/\{h \in \Mat_{n}(\mO_{K}): h = h^{*}\}$ and $Y_{\textrm{im}} = \{h \in \Mat_{n}(\bfC_{\bfa}): h = h^{*}, h> 0\}/\sim$ where $h \sim h'$ if there exists $g \in \GL_{n}(\mO_{K})$ such that $h'= g h g^{*}$.  Then one has $Y_{\textrm{re}} \times Y_{\textrm{im}}$ is commensurable with $\mK_{0,n}(\fN) \cap P_{n}(F) \backslash \bfH_{n}^{\bfa}$,
i.e., the ratio of their volumes is a rational number \cite[p.179]{ShimuraArithmeticity}.

 Set
\begin{equation*}
C(s) = \frac{2^aD_K^{n(n-1)/2}\#X_{m,\fN} h_{K} \Gamma((\ov{s}-n)) (\det \tau)^{-\ov{s} +n - (k+l)/2} |\det r|_{K}^{\ov{s} - n/2} e_{\bfa}(-i \tr \tau_{\bfa}) c_{f}(\tau,r)}{ [\mK_{0,n}(\fN):\mK_{1,n}^{1}(\fN)] \prod_{\up \in \bfc} g_{\up}(\xi(\varpi_{\up}) |\varpi_{\up}|_{\up}^{2\ov{s}+n})}
\end{equation*}
where $\bfc$ is a finite subset of ${\bf f}$ as given in \cite[Proposition 19.2]{ShimuraCBMS97} and $\mK_{1,n}^{1}(\fN) = \{ \kappa \in \mK_{1,n}(\fN): \det \kappa = 1\}$ and $a=1+n(n-1)[F:\bfQ]/2$. Note this differs from \cite[(7.29)]{Klosin15} as the term $h_{K}$ is omitted there while the term $A_N$ there is replaced by $1/(2^{a}D_K^{n(n-1)/2})$ here. We explain this last replacement below (see proof of Theorem \ref{thm:innerproduct}).

For $f_1, f_2 \in \mM_{n,k,\nu}(\mK)$ (with at least one of them a cusp form) we define the inner product $\langle f_1,f_2 \rangle$ as in \cite[(10.9.6)]{ShimuraCBMS97} and set
\be \label{normalizedinner} \langle f_1,f_2 \rangle_{\mK}:=\vol(\mF_{\mK}) \langle f_1,f_2 \rangle,\ee
where $\mF_{\mK}:=(G_n(F)\cap p_b \mK p_b^{-1})\setminus \bfH_n^{\bfa}$ for $b \in \mB$ (we refer the reader to \cite[p. 81]{ShimuraCBMS97} for the definition of the volume and the explanation that the volume is independent of the choice of $b$, but see Remark \ref{volume explicit} where we give a formula for it in the case when $\mK=\mK_{0,n}(\fN)$, the group  which appears in Theorem \ref{thm:innerproduct}). We note here that $\langle f_1, f_2 \rangle$ is independent of the choice of level $\mK$ and satisfies $\langle \rho(g) f_{1}, \rho(g) f_2 \rangle = \langle f_1, f_2 \rangle$ for all $g \in G_{n}(\bfA_{F})$  where $\rho$ is the right regular representation \cite[(10.9.3)]{ShimuraCBMS97}. One can see \cite[(10.9.2a)]{ShimuraCBMS97} for the definition of the inner product for two Hermitian modular forms defined on Hermitian upper half-space.  The key input into our congruence result is the following inner product relation.

\begin{thm}[\cite{Klosin15}, Theorem 7.7]\label{thm:innerproduct} Assume $(h_{K},2n) = 1$
and $\Cl_K^-=\Cl_K$.  Let $f \in \mS_{n,k}(\fN)$ be a Hecke eigenform and let $\xi$ and $\psi'$ be as above.  Then
\begin{equation*}
\langle D(\cdot, s, m, (\psi')^{c}, \fN)\theta_{\xi}, f\rangle_{\mK_{0,n}(\fN)} = \overline{C(s)} \cdot \overline{L(\overline{s}+n/2,f,\xi;\st)}
\end{equation*}
where $C(s)$ is as given above.
\end{thm}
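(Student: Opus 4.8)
The plan is to establish the inner product relation by the Rankin--Selberg unfolding method, exactly along the lines of \cite[Theorem 7.7]{Klosin15}, and then to verify that each ingredient of that argument survives the passage from $F=\bfQ$ to a general totally real base field $F$ with $(h_K,2n)=1$ and $\Cl_K^-=\Cl_K$. First I would write the normalized inner product $\langle D(\cdot,s,m,(\psi')^c,\fN)\theta_\xi, f\rangle_{\mK_{0,n}(\fN)}$ as an integral over $\mF_{\mK_{0,n}(\fN)}$, insert the definition of the Eisenstein series $E(g,s,m,(\psi')^c,\fN)$ as a sum over $P_n(F)\backslash G_n(F)$, and unfold: the sum over cosets collapses the domain of integration from (a fundamental domain for) $G_n(F)\backslash\bfH_n^{\bfa}$ to one for $P_n(F)\backslash\bfH_n^{\bfa}$. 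Here one uses the commensurability of $P_n(F)\cap\mK_{0,n}(\fN)\backslash\bfH_n^{\bfa}$ with $Y_{\textup{re}}\times Y_{\textup{im}}$ recorded above (from \cite[p.~179]{ShimuraArithmeticity}), together with Lemma \ref{corl:nicebase} to guarantee an admissible base consisting of scalar matrices $b$ with $bb^*=1_n$, so that the various $p_b$-conjugates appearing in $\mF_{\mK}$ behave well. After unfolding, the integrand involves $\theta_\xi\cdot\ov{f}$ against the constant term of $D$ along $N_{P_n}$, and the integral over the Siegel domain factors, via the Fourier expansions of $\theta_\xi$ and $f$, into an archimedean integral producing the $\Gamma$-factor $\Gamma((\ov s-n))$ and a finite Euler product.

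Next I would identify that Euler product with the standard $L$-function: matching the local integrals at the unramified places against the local factors $L_\up(s,f,\xi;\st)$ from the definition in the $L$-functions subsection is the heart of the computation, and it rests on Shimura's local calculations \cite[Proposition 19.2]{ShimuraCBMS97}, which are stated for general number fields and therefore apply verbatim; this is the source of the correction factor $\prod_{\up\in\bfc}g_\up(\xi(\varpi_\up)|\varpi_\up|_\up^{2\ov s+n})$ in the definition of $C(s)$. The normalization by $\prod_{j=1}^n L^{\fN}(2s-j+1,\psi'_F\chi_K^{j-1})$ in passing from $E$ to $D$ is precisely what cancels the product of abelian $L$-values that otherwise clutters the unfolded expression, leaving the clean single $L$-function $\ov{L(\ov s+n/2,f,\xi;\st)}$. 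The complex conjugations on $C(s)$ and on the $L$-value arise because the inner product is conjugate-linear in the second variable $f$, so $\ov{f}$ forces conjugation of the Fourier coefficient $c_f(\tau,r)$ and of the Hecke eigenvalues; using $(\psi')^c$ (the complex conjugate character) in the Eisenstein series is exactly what is needed for the characters to match up after this conjugation.

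The two places where the change of base field $F$ genuinely intervenes — and which I would treat carefully rather than cite — are: (i) the archimedean factor, which becomes a $d$-fold product over $\bfa=\bfa_F$ (hence the $d$-th power in $\Gamma((s))=(4\pi)^{-nd(2s+k+l)/2}\Gamma_n(s+(k+l)/2)^d$ and the extra $\pi$-powers, following \cite[Equation 18.11.5]{ShimuraCBMS97}); and (ii) the volume and index constants: the term $A_N$ of \cite[(7.29)]{Klosin15} must be replaced by $1/(2^a D_K^{n(n-1)/2})$ with $a=1+n(n-1)[F:\bfQ]/2$, and an extra factor $h_K$ appears because over a general $F$ the class group $\Cl_K$ enters the relation between the adelic inner product and the integral over $\mF_{\mK_{0,n}(\fN)}$ through the base $\mB$ (this is the replacement explained in the text preceding the theorem, and I would justify it by the reference to \cite[p.~179]{ShimuraArithmeticity} and the structure of the base). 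I expect the main obstacle to be precisely this bookkeeping of volume, index, and class-number normalizations — tracking how $\vol(\mF_{\mK_{0,n}(\fN)})$, the index $[\mK_{0,n}(\fN):\mK_{1,n}^1(\fN)]$, the cardinality $\#X_{m,\fN}$, and $h_K$ fit together so that the stated $C(s)$ is correct — whereas the unfolding itself and the local Euler product identification are essentially formal consequences of Shimura's work, valid over any number field. Since, as noted, all the relevant results of Shimura are stated for general totally real $F$, and the computations of \cite[Section 7]{Klosin15} are local in nature at each place, the proof reduces to assembling these pieces with the corrected constants.
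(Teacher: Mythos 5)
Your proposal follows essentially the same route as the paper: both defer the Rankin--Selberg unfolding and the local Euler-product identification to \cite[Theorem 7.7]{Klosin15} and Shimura's general-field computations, and both isolate as the genuinely new content the normalization constant between the commensurable domains $\mK_{0,n}(\fN)\cap P_n(F)\backslash\bfH_n^{\bfa}$ and $Y_{\textup{re}}\times Y_{\textup{im}}$ (computed in the paper via Lemma \ref{corl:nicebase} and $\vol(Y_{\textup{re}})$ to give $A=1/(2^{1+n(n-1)[F:\bfQ]/2}D_K^{n(n-1)/2})$), together with the $d$-th power in the archimedean $\Gamma$-factor and the $h_K$ bookkeeping. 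Your identification of where the base change actually intervenes, and of the corrected constants, matches the paper's argument.
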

\begin{proof} Let us explain here only the reason for the slight difference between our current definition of $C(s)$ and the definition of $C(s)$ given in \cite[(7.29)]{Klosin15} where $A_N$ got replaced by $1/(2^{1+n(n-1)[F:\bfQ]/2}D_K^{n(n-1)/2})$. The constant $A_N$ is defined on \cite[p. 842]{Klosin15}, but let us denote it here by $A$ as it is denoted on \cite[p. 179]{ShimuraArithmeticity}, where it is defined as a normalization constant between two integrals over ``commensurable'' domains, denoted in \cite{ShimuraArithmeticity} by $\Gamma^P \setminus \mH$ and $X \times Y$.  Using the fact that under our assumptions that $(h_K, 2n)=1$ and $\Cl_K^-=\Cl_K$ we can choose a base $\mB$ consisting of  scalar matrices (Lemma \ref{corl:nicebase}) and use elements of this base as the matrices $q$ in \cite[(22.2a)]{ShimuraArithmeticity} in our case the above domains can be taken to be the following:  $\mK_{0,n}(\fN) \cap P_{n}(F) \backslash \bfH_{n}^{\bfa}$ and $Y_{\rm re} \times Y_{\rm  im}$, where $Y_{\textrm{re}} = \{h \in \Mat_{n}(\bfC_{\bfa}): h = h^{*}\}/\{h \in \Mat_{n}(\mO_{K}): h = h^{*}\}$ and $Y_{\textrm{im}} = \{h \in \Mat_{n}(\bfC_{\bfa}): h = h^{*}, h> 0\}/\sim$ where $h \sim h'$ if there exists $g \in \GL_{n}(\mO_{K})$ such that $h'= g h g^{*}$. It is not difficult to show that the direct product of the fundamental domain for $Y_{\rm re}$ and the fundamental domain for $Y_{\rm im}$ (which is clearly a fundamental domain for $Y_{\rm re} \times Y_{\rm  im}$) is also a fundamental domain for $\mK_{0,n}(\fN) \cap P_{n}(F) \backslash \bfH_{n}^{\bfa}$ after noting that $\mK_{0,n}(\fN) \cap P_{n}(F) \cong \GL_n(\OK) \ltimes \{h \in \Mat_{n}(\mO_{K}): h = h^{*}\}$ with $\GL_n(\OK)$ acting on $\bfH_n^{\bfa}$ by $z \mapsto gzg^*$. Thus the normalization constant can be computed by comparing the formula in \cite[p. 179, line 6]{ShimuraArithmeticity} with \cite[(22.9)]{ShimuraArithmeticity} taking into account differences in measures. More precisely, in our case one has (note that since $F$ is totally real $\nu$ in \cite{ShimuraArithmeticity} equals 2) $A=\frac{2^{n(1-n)[F:\bfQ]}}{2 \vol(Y_{\rm re})}$. Since
$Y_{\rm re} \cong (\bfR^{[F:\bfQ]}/\bfZ^{[F:\bfQ]})^n \times ((K \otimes_{\bfQ} \bfR)/\iota(\OK))^{n(n-1)/2})$ with $\iota: K \to K\otimes_{\bfQ}\bfR$ and $\vol( (K \otimes_{\bfQ} \bfR)/\iota(\OK))=D_K^{1/2}/2^{[F:\bfQ]}$, we get that $A=1/(2^{1+n(n-1)[F:\bfQ]/2}D_K^{n(n-1)/2})$.
\end{proof}

\begin{rem} \label{volume explicit} The definition of the inner product in Theorem \ref{thm:innerproduct} involves the volume of the fundamental domain $\mF_{\mK_{0,n}}(\fN)$. This volume can be computed combining \cite[Theorem 24.4]{ShimuraCBMS97} and \cite[(24.6.5)]{ShimuraCBMS97}. We provide the value here.
\begin{align*}
    \vol(F_{K_{0,n}(\fN)}) &= [\Gamma_{n}(\fN)\cap H:1]2\Nm(\fd_{K/F})^{-n} \left( 2^{2n^2} \pi^{n^2} \prod_{j=1}^{n} \frac{(j-1)!}{(n-j)!} \right)^{[F:\bfQ]} \\
        &\cdot \prod_{j=1}^{2n-1} (j!)^{d} D_{F}^{2n^2 - n} \Nm(\fN)^{4n^2} \prod_{j=1}^{n} \Nm(\fd_{K/F})^{j/2} D_{F}^{1/2} (2\pi)^{-jd} L_{\fN}(j,\chi_{K}^{j}),
    \end{align*}
where $\Gamma_n(\fN) = \mK_{0,n}(\fN) \cap G_n(F)$ and  $H = \{x \in K^{\times}: x \overline{x} =1 \}$.
\end{rem}

\section{Congruence}

For the moment fix a quadruple $k,\xi, r,\tau$ satisfying the conditions of section \ref{sec:theta} and an ideal $\fN$ of $\OF$ such that $\fN_{\ft} \mid \fN$  and let $\theta_{\xi}\in \mS_{n,t+k+n}(\fN, \psi')$ denote the associated theta series as in that section.  We ease the notation in this section by writing $D(g) = D(g,n-m/2,m, (\psi')^{c}, \fN)$
 and $D^{*}(g) = D(g \eta_{\bf f}^{-1}, n - m/2, m, (\psi')^{c}, \fN)$.  Set $l = t+k + n$ as above and put $m = k-l$.
Since $\theta_{\xi} \in \mS_{n,l}( \fN, \psi')$ and $D \in \mM_{n,m}(\fN, (\psi')^{-1})$, we have $D \theta_{\xi} \in \mS_{n,k}(\fN)$ and $(D\theta_{\xi})^{*} \in \mS_{n,k}(\eta_{\bf f}^{-1} \mK_{0,n, {\bf f}}( \fN) \eta_{\bf f})$.

Given $\mK_1 \subset \mK_2$, we define a trace operator
\begin{align*}
\tr_{\mK_1}^{\mK_2}: \mM_{n,k}(\mK_1) &\rightarrow \mM_{n,k}(\mK_2) \\
    f &\mapsto \sum_{\kappa \in \mK_1 \backslash \mK_2} \rho(\kappa) f
\end{align*}
where we recall that $\rho$ is the right regular representation.
If $f$ has $\mO$-integral Fourier coefficients the $q$-expansion principle implies that $\tr f$ also has $\mO$-integral Fourier coefficients \cite[Section 8.4]{HidaSpringer04}.

Now we fix one more ideal $\fM$ of $\OF$ with $\fM  \mid \fN$.
For $g \in G_{n}(\bfA_{K})$, write $\mK^{g} := g^{-1} \mK g$. Set
\begin{equation} \label{defofXi}
\Xi := \tr_{\mK_{0,n}(\fN)^{\eta_{\bf f}} }^{\mK_{0,n}(\fM)^{\eta_{\bf f}}} (\pi^{-nd(n+1)/2}(D\theta_{\xi})^{*}).
\end{equation}

\begin{lemma} \label{FcofXi}  Let $\ell$ be a prime.  Assume $(h_K, 2n)=1$, $\Cl_K^-=\Cl_K$. Further assume that $\ell \nmid D_{K} \Nm_{F/\bfQ}(\fN)(n-2)!$ and that $r$ satisfies $r_w=1_n$ for all $w \mid D_K \ell \textup{cond}(\xi)$. If $\mB$ is an $\ell$-admissible base with support relatively prime to $\cond(\psi)$ there exists a finite extension $E$ of $\bfQ_{\ell}$ so that $e_{\bfa}(- i \tr h_{\bfa}) c_{\Xi}(h,b) \in \mO_{E}$ for all $h \in S_{n}(F)$ and $b \in \mB$.
\end{lemma}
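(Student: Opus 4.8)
The plan is straightforward. I would unwind the definition of $\Xi$, use that $(\,\cdot\,)^{*}$ is multiplicative so $(D\theta_{\xi})^{*}=D^{*}\theta_{\xi}^{*}$ as functions on $G_{n}(\bfA_{F})$, combine the convolution formula for the Fourier coefficients of a product with the integrality statements of Propositions~\ref{thm:eiscoeffs} and~\ref{prop:thetacoeffs}, and finally invoke the integrality‑preservation of the trace operator recalled just before \eqref{defofXi}.

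In detail: comparing Fourier expansions, evaluating at $\bmat 1_{n} & \sigma \\ & 1_{n}\emat\bmat q \\ & \hat{q}\emat$ and collecting coefficients gives, for every $q$ and every $h\in S_{n}(F)$,
\begin{equation*}
c_{(D\theta_{\xi})^{*}}(h,q)=\sum_{\substack{h_{1},h_{2}\in S_{n}(F)\\ h_{1}+h_{2}=h}}c_{D^{*}}(h_{1},q)\,c_{\theta_{\xi}^{*}}(h_{2},q),
\end{equation*}
a finite sum since, by positivity of the indices occurring in the Fourier expansion of a holomorphic form (cf.\ \cite[Proposition~18.3]{ShimuraCBMS97}), $c_{D^{*}}(h_{1},q)=0$ unless $h_{1}\geq 0$ and $c_{\theta_{\xi}^{*}}(h_{2},q)=0$ unless $h_{2}>0$. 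Because $e_{\bfa}(-i\tr h_{\bfa})=e_{\bfa}(-i\tr (h_{1})_{\bfa})\,e_{\bfa}(-i\tr (h_{2})_{\bfa})$ when $h_{1}+h_{2}=h$, the normalization distributes through the convolution. Now Proposition~\ref{thm:eiscoeffs} (whose hypotheses $\ell\nmid D_{K}\Nm_{F/\bfQ}(\fN)(n-2)!$, $(h_{K},2n)=1$, $\Cl_{K}^{-}=\Cl_{K}$, and $\ell$‑admissibility of $\mB$ with support prime to $\cond(\psi)$ are all in force here) gives $\pi^{-n(n+1)d/2}e_{\bfa}(-i\tr (h_{1})_{\bfa})c_{D^{*}}(h_{1},b)\in\mO_{E}$; and Proposition~\ref{prop:thetacoeffs}, applied to $\theta_{\xi}$ (using $r_{w}=1_{n}$ for $w\mid D_{K}\ell\cond(\xi)$) and, by the identical reasoning — Shimura's transformation formulas for theta series, cf.\ \cite[Section A7]{ShimuraCBMS97} — to its finite translate $\theta_{\xi}^{*}$, gives $e_{\bfa}(-i\tr (h_{2})_{\bfa})c_{\theta_{\xi}^{*}}(h_{2},b)\in\mO_{E}$, after enlarging $E$ if necessary. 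Substituting into the convolution formula shows that all normalized Fourier coefficients $e_{\bfa}(-i\tr h_{\bfa})c(h,b)$ of $\pi^{-nd(n+1)/2}(D\theta_{\xi})^{*}$ lie in $\mO_{E}$, for all $h\in S_{n}(F)$ and $b\in\mB$.

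It remains to pass through the trace operator $\tr_{\mK_{0,n}(\fN)^{\eta_{\bff}}}^{\mK_{0,n}(\fM)^{\eta_{\bff}}}$, a finite sum of right translations by elements of $G_{n}(\bfA_{F,\bff})$; since $\fN$ (hence $\fM$) is prime to $\ell$, it preserves $\mO_{E}$‑integrality of Fourier coefficients by the $q$‑expansion principle (\cite[Section~8.4]{HidaSpringer04}), as recalled before \eqref{defofXi}. Applying it to $\pi^{-nd(n+1)/2}(D\theta_{\xi})^{*}$ yields $e_{\bfa}(-i\tr h_{\bfa})c_{\Xi}(h,b)\in\mO_{E}$ for all $h\in S_{n}(F)$ and $b\in\mB$, which is the claim. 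The substantive content of the argument lies entirely in Propositions~\ref{thm:eiscoeffs} and~\ref{prop:thetacoeffs}, not in these formal steps; within the present proof the only point demanding genuine care is the passage from $\theta_{\xi}$ to the translate $\theta_{\xi}^{*}$ appearing in $(D\theta_{\xi})^{*}$, which is precisely why one imposes $r_{w}=1_{n}$ at the primes dividing $D_{K}\ell\cond(\xi)$ and why the coprimality hypothesis $\ell\nmid D_{K}\Nm_{F/\bfQ}(\fN)(n-2)!$ is needed to keep the denominators introduced by the involution $\eta_{\bff}$ and by the special value of the Eisenstein series away from $\ell$.
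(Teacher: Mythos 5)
Your argument is correct and is exactly the paper's proof (which simply says the lemma "follows immediately upon combining Proposition \ref{thm:eiscoeffs}, Proposition \ref{prop:thetacoeffs}, and the $q$-expansion principle"), with the convolution of Fourier expansions and the passage from $\theta_{\xi}$ to $\theta_{\xi}^{*}$ spelled out explicitly. The extra care you take with $\theta_{\xi}^{*}$ is a detail the paper elides but does not change the route.
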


\begin{proof}   The result follows immediately upon combining Proposition \ref{thm:eiscoeffs}, Proposition \ref{prop:thetacoeffs}, and the $q$-expansion principle as explained above.
\end{proof}

\begin{lemma}\label{lem:prods}  For any eigenform $f \in \mS_{n,k}(\fM)$
with $\fM \mid \fN$ we have
\begin{equation*}
\langle \Xi, f^{*} \rangle = \pi^{-nd(n+1)/2} [\mK_{0,n}(\fM): \mK_{0,n}(\fN)] \langle D \theta_{\xi},  f \rangle.
\end{equation*}
\end{lemma}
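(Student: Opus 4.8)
The plan is to unfold the trace defining $\Xi$, slide it past the inner product by means of the invariance of $\langle\,\cdot\,,\,\cdot\,\rangle$ under the right regular representation $\rho$, and finally to cancel the twist by $\eta_{\bf f}$.

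First I would record the combinatorial input. Since $\fM\mid\fN$ we have $\mK_{0,n,{\bf f}}(\fN)\subseteq\mK_{0,n,{\bf f}}(\fM)$, so by the very definition of $\Xi$ the trace operator produces a sum $\sum_{\kappa}\rho(\kappa)\bigl(\pi^{-nd(n+1)/2}(D\theta_\xi)^*\bigr)$ over a set of representatives $\kappa$ for $\mK_{0,n}(\fN)^{\eta_{\bf f}}\backslash\mK_{0,n}(\fM)^{\eta_{\bf f}}$, which may (and will) be chosen inside the finite part $\mK_{0,n,{\bf f}}(\fM)^{\eta_{\bf f}}$. The number of terms is $[\mK_{0,n}(\fM)^{\eta_{\bf f}}:\mK_{0,n}(\fN)^{\eta_{\bf f}}]=[\mK_{0,n}(\fM):\mK_{0,n}(\fN)]$, since conjugation by $\eta_{\bf f}$ is a group automorphism and preserves indices. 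Moreover, because $f\in\mS_{n,k}(\fM)$ has trivial nebentypus, the form $f^*$ has level $\eta_{\bf f}^{-1}\mK_{0,n,{\bf f}}(\fM)\eta_{\bf f}=\mK_{0,n,{\bf f}}(\fM)^{\eta_{\bf f}}$, so $\rho(\kappa)f^*=f^*$ for each such $\kappa$ by condition (ii) in the definition of $\mM_{n,k}$.

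Then, using bilinearity of the inner product together with $\langle\rho(g)f_1,\rho(g)f_2\rangle=\langle f_1,f_2\rangle$ from \cite[(10.9.3)]{ShimuraCBMS97}, I would compute
\begin{align*}
\langle\Xi,f^*\rangle &= \pi^{-nd(n+1)/2}\sum_{\kappa}\langle\rho(\kappa)(D\theta_\xi)^*,f^*\rangle\\
&= \pi^{-nd(n+1)/2}\sum_{\kappa}\langle\rho(\kappa)(D\theta_\xi)^*,\rho(\kappa)f^*\rangle\\
&= \pi^{-nd(n+1)/2}\sum_{\kappa}\langle(D\theta_\xi)^*,f^*\rangle\\
&= \pi^{-nd(n+1)/2}[\mK_{0,n}(\fM):\mK_{0,n}(\fN)]\,\langle(D\theta_\xi)^*,f^*\rangle,
\end{align*}
where in the second line I replaced $f^*$ by $\rho(\kappa)f^*$. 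Finally, writing $(D\theta_\xi)^*=\rho(\eta_{\bf f}^{-1})(D\theta_\xi)$ and $f^*=\rho(\eta_{\bf f}^{-1})f$ and applying $\rho$-invariance once more gives $\langle(D\theta_\xi)^*,f^*\rangle=\langle D\theta_\xi,f\rangle$, which is the asserted identity. All inner products here are defined because $D\theta_\xi$—and hence $(D\theta_\xi)^*$ and $\Xi$—is a cusp form, as observed just before the statement.

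I do not expect any substantial obstacle. The only two points that require care are: (a) that the pairing appearing in the lemma is the \emph{unnormalized} $\langle\,\cdot\,,\,\cdot\,\rangle$ of \cite[(10.9.6)]{ShimuraCBMS97} (not the level-normalized $\langle\,\cdot\,,\,\cdot\,\rangle_{\mK}$), so that the trace genuinely contributes the index $[\mK_{0,n}(\fM):\mK_{0,n}(\fN)]$ rather than $1$; and (b) that the coset representatives for the trace may indeed be chosen in the finite-adelic part, so that $f^*$ is literally fixed by them and the $\rho$-invariance step is legitimate.
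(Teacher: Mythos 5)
Your proof is correct and follows essentially the same route as the paper: unfold the trace, use the $\rho$-invariance of the Petersson pairing together with the $\mK_{0,n}(\fM)^{\eta_{\bf f}}$-invariance of $f^*$ to collapse the sum into the index $[\mK_{0,n}(\fM):\mK_{0,n}(\fN)]$, and then undo the $\eta_{\bf f}$-twist. The paper phrases the middle step as moving $\rho(\kappa)$ to the other slot as $\rho(\kappa^{-1})$ rather than inserting $f^*=\rho(\kappa)f^*$, but this is the same computation; your extra remarks (a) and (b) are correct points of care that the paper leaves implicit.
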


\begin{proof}
We have
\begin{align*}
 \pi^{nd(n+1)/2} \langle \Xi, f^{*} \rangle &=  \sum_{\kappa \in \mK_{0,n}(\fN)^{\eta_{\bf f}}\backslash \mK_{0,n}(\fM)^{\eta_{\bf f}}} \langle \rho(\kappa) (D\theta_{\xi})^{*}, f^{*} \rangle \\
    &= \sum_{\kappa \in \mK_{0,n}(\fN)^{\eta_{\bf f}}\backslash \mK_{0,n}(\fM)^{\eta_{\bf f}}} \langle (D\theta_{\xi})^{*}, \rho(\kappa^{-1}) f^{*} \rangle \\
    &= [\mK_{0,n}(\fM): \mK_{0,n}(\fN)] \langle (D\theta_{\xi})^{*}, f^{*} \rangle\\
    &= [\mK_{0,n}(\fM): \mK_{0,n}(\fN)] \langle D \theta_{\xi}, f \rangle.
\end{align*}
\end{proof}

We now apply Lemma \ref{lem:prods} together with Theorem \ref{thm:innerproduct} to obtain the following result. For the convenience of the reader we make the statement of Theorem \ref{thmmain} self-contained, repeating all the assumptions and making some of the choices in a slightly different order than in the above narrative.


\begin{thm} \label{thmmain}  Let $F/\bfQ$ be a totally real extension of degree $d$, $K/F$ an imaginary quadratic extension, and assume $\Cl^{-}_{K} = \Cl_{K}$. Let $n$ be a positive integer with $(h_{K},2n) = 1$.
 Let $k = (k, \dots, k) \in \bfZ^{\bfa}$ be a parallel weight so that $k > 0$ and let $\ell$ be a rational prime so that $\ell > k$ and $\ell \nmid D_{K} h_{K}$.  Let $f \in \mS_{n,k}(\fM)$ be a Hecke eigenform with $\mO$-integral Fourier coefficients for $\mO$ the ring of integers in some algebraic extension $E/\bfQ_{\ell}$.  
Let $\xi$ be a Hecke character of $K$ so that
$\xi_{\bfa}(z) = \left(\frac{z}{|z|}\right)^{-t}$ for some $t = (t, \dots, t) \in \bfZ^{\bfa}$ with $-k \leq t < \min\{-6, -4n\}$. 
 Assume there exists
$\tau \in S^+_{n}(\OF)$
and $r \in \GL_{n}(\bfA_{K,{\bfk}})$ 
 such that $r_w=1_n$ for all $w \mid D_K\ell \textup{cond}(\xi)$,  the condition (\ref{reldiff}) holds and $\val_{\varpi}(e_{\bfa}(- i \tr h_{\bfa})c_{f}(\tau,r)) = \val_{\varpi}(\det r)= 0$.
Let $\fN$ be an ideal of $\OF$ such that $\fN_{\ft} \mid \fN$ and $\fM \mid \fN$ with $\fN_{\ft}$ defined as in (\ref{fNt}). Then there exists a field subextension $\bfQ_{\ell} \subset E'\subset E$, finite over $\bfQ_{\ell}$ with uniformizer $\varpi$, containing  the algebraic value \begin{equation*}
L^{\rm alg}(2n+t/2, f, \xi;\st) = \frac{L(2n+t/2, f, \xi; \st)}{\pi^{n(k + 2n + t +1)d} \langle f, f \rangle}
\end{equation*} such that if
$$\val_{\varpi}\left( \#(\mO_{K}/\fN \mO_{K})\#(\mO_{K}/\fN\mO_{K})^{\times}[\mK_{0,n}(\fM): \mK_{0,n}(\fN)]\right)=0$$
and
\begin{equation*}
-b = \val_{\varpi}\left(\frac{\pi^{dn^2}}{\vol(\mF_{\mK_{0,n}(\fM)})}\ov{L^{\rm alg}(2n+t/2, f, \xi; \st)}\right)  < 0,
\end{equation*}
then there exists $f' \in \mS_{n,k}(\fM)$, orthogonal to $f$, with $\Oo$-integral Fourier coefficients  so that $f \equiv f' \pmod{\varpi^{b}}$.
\end{thm}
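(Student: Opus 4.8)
The plan is to produce the congruence from the Eisenstein--theta product $\Xi$ of \eqref{defofXi}. Set $\tilde\Xi := \Xi^{*}$. Since $D\theta_{\xi} \in \mS_{n,k}(\fN)$ is a cusp form and both the trace operator and the operator $\phi \mapsto \phi^{*}$ preserve cuspidality, $\tilde\Xi \in \mS_{n,k}(\fM)$. First I would fix an $\ell$-admissible base $\mB$ with support prime to $\cond(\psi)$ and to $\fN$ (it exists by Lemma~\ref{corl:nicebase} and the Tchebotarev argument), and, enlarging $E$ if necessary, invoke Lemma~\ref{FcofXi} together with the compatibility of $\mO$-integrality with $\ast$ and with change of admissible base (the remarks after Definition~\ref{def of cong}) to conclude that $\tilde\Xi$, $f$ and $f^{*}$ all have $\mO$-integral Fourier coefficients with respect to $\mB$. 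I would then decompose $\mS_{n,k}(\fM) = \bfC f \oplus W$ orthogonally for the Petersson product, with $W = \{g : \langle g,f\rangle = 0\}$, and write $\tilde\Xi = c_{\Xi} f + \tilde\Xi'$ with $\tilde\Xi' \in W$, so $c_{\Xi} = \langle\tilde\Xi, f\rangle/\langle f,f\rangle$.

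Next I would compute $c_{\Xi}$. Because $f$ (of level $\mK_{0,n}(\fM)$ and trivial character) is fixed by the central element $\eta_{\bff}^{2} = -1_{2n}$, one has $f(\,\cdot\,\eta_{\bff}) = f^{*}$, and right $G_{n}(\AF)$-invariance of the inner product gives $\langle\tilde\Xi, f\rangle = \langle\Xi, f^{*}\rangle$. Applying Lemma~\ref{lem:prods} and then Theorem~\ref{thm:innerproduct} with $D = D(\cdot, n-m/2, m, (\psi')^{c}, \fN)$, so that the evaluation point is $\overline{s}+n/2 = 2n + t/2$ (using $m = k-l = -t-n$), and using $\langle\cdot,\cdot\rangle_{\mK_{0,n}(\fN)} = \vol(\mF_{\mK_{0,n}(\fN)})\langle\cdot,\cdot\rangle$, $\langle f^{*},f^{*}\rangle = \langle f,f\rangle$ and the definition of $L^{\rm alg}$, yields
\begin{equation*}
c_{\Xi} = \frac{\pi^{-nd(n+1)/2}\,[\mK_{0,n}(\fM):\mK_{0,n}(\fN)]}{\vol(\mF_{\mK_{0,n}(\fN)})}\cdot\overline{C(n-m/2)}\cdot\pi^{nd(k+2n+t+1)}\cdot\overline{L^{\rm alg}(2n+t/2, f, \xi;\st)}.
\end{equation*}
Every factor on the right other than powers of $\pi$ and $\vol(\mF_{\mK_{0,n}(\fM)})$ will turn out to be algebraic, so $c_{\Xi}\in\overline{\bfQ}$ and we may take $E'$ to be a finite subextension of $E/\bfQ_{\ell}$ containing $c_{\Xi}$, $L^{\rm alg}(2n+t/2,f,\xi;\st)$ and $\pi^{dn^{2}}/\vol(\mF_{\mK_{0,n}(\fM)})$.

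The bulk of the argument is the $\ell$-unit bookkeeping. Writing out $C(s)$, at $s = n-m/2$ the only factor carrying $\pi$ is $\Gamma((\overline{s}-n)) = (4\pi)^{-nd(t+n+k)}\Gamma_{n}(t+n+k)^{d}$; since $\Gamma_{n}(t+n+k) = \pi^{n(n-1)/2}\prod_{j=0}^{n-1}(t+n+k-1-j)!$ and $-k\le t<\min\{-6,-4n\}$ forces $0\le t+k\le t+n+k-1<k<\ell$, all these factorials are $\ell$-units, and $\overline{C(n-m/2)} = \varepsilon\cdot\pi^{\,dn(n-1)/2-nd(t+n+k)}$ with $\varepsilon$ an algebraic $\ell$-adic unit: indeed the remaining constituents $2^{a}$, $D_{K}^{n(n-1)/2}$, $h_{K}$, $\#X_{m,\fN}$, $4^{-nd(t+n+k)}$, $(\det\tau)^{-l}$, $|\det r|_{K}^{n+t/2}$, $e_{\bfa}(-i\tr\tau_{\bfa})c_{f}(\tau,r)$, the reciprocal of $[\mK_{0,n}(\fN):\mK_{1,n}^{1}(\fN)]$ and the reciprocals of the $g_{\up}$ are $\ell$-units by the hypotheses $\ell>k$, $\ell\nmid D_{K}h_{K}$, $\val_{\varpi}(\#(\mO_{K}/\fN\mO_{K})\,\#(\mO_{K}/\fN\mO_{K})^{\times}\,[\mK_{0,n}(\fM):\mK_{0,n}(\fN)])=0$, $\val_{\varpi}(e_{\bfa}(-i\tr\tau_{\bfa})c_{f}(\tau,r))=\val_{\varpi}(\det r)=0$ and $\det\tau$ prime to $\ell$, together with the $\ell$-admissibility of $\mB$ and $r_{w}=1_{n}$ for $w\mid D_{K}\ell\cond(\xi)$. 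An exponent count gives $-\tfrac{n+1}{2}-(t+n+k)+\tfrac{n-1}{2}+(k+2n+t+1) = n$, so
\begin{equation*}
\pi^{-nd(n+1)/2}\,\overline{C(n-m/2)}\,\pi^{nd(k+2n+t+1)} = \varepsilon\cdot\pi^{dn^{2}};
\end{equation*}
and comparing the formula of Remark~\ref{volume explicit} at levels $\fN$ and $\fM$ shows that $[\mK_{0,n}(\fM):\mK_{0,n}(\fN)]\cdot\vol(\mF_{\mK_{0,n}(\fM)})/\vol(\mF_{\mK_{0,n}(\fN)})$ is an $\ell$-unit, the $\fN$-dependence of the volume being powers of $\Nm(\fN/\fM)$, which the hypotheses make prime to $\ell$. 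Combining, $c_{\Xi}$ equals an $\ell$-adic unit times $\tfrac{\pi^{dn^{2}}}{\vol(\mF_{\mK_{0,n}(\fM)})}\,\overline{L^{\rm alg}(2n+t/2,f,\xi;\st)}$, hence $\val_{\varpi}(c_{\Xi}) = -b$.

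Finally I would conclude. Since $b\ge 1$ we have $L^{\rm alg}(2n+t/2,f,\xi;\st)\ne 0$, so $c_{\Xi}\ne 0$; put $f' := f - c_{\Xi}^{-1}\tilde\Xi = -c_{\Xi}^{-1}\tilde\Xi'\in\mS_{n,k}(\fM)$, which is orthogonal to $f$. As $\val_{\varpi}(c_{\Xi}^{-1}) = b\ge 1$ and $\tilde\Xi$ has $\mO$-integral Fourier coefficients, $c_{\Xi}^{-1}\tilde\Xi$ has Fourier coefficients in $\varpi^{b}\mO_{E'}\subseteq\mO$; therefore $f'$ has $\mO$-integral Fourier coefficients, and $f-f' = c_{\Xi}^{-1}\tilde\Xi$ shows $f\equiv f'\pmod{\varpi^{b}}$ in the sense of Definition~\ref{def of cong}. (One also sees $f'\ne 0$: if $\tilde\Xi = c_{\Xi}f$ then, comparing $(\tau,r)$-Fourier coefficients, $\val_{\varpi}(c_{\Xi})\ge 0$ because $e_{\bfa}(-i\tr\tau_{\bfa})c_{f}(\tau,r)$ is an $\ell$-unit and $\tilde\Xi$ has $\ell$-integral Fourier coefficients, contradicting $\val_{\varpi}(c_{\Xi})<0$.) I expect the main obstacle to be the $\ell$-unit bookkeeping of the third paragraph: verifying that each auxiliary constant appearing in $C(s)$, in the level index, and in the volume formula is prime to $\ell$ under precisely the stated hypotheses, most delicately the factor $(\det\tau)^{-l}$ (the reason one wants $\det\tau$ prime to $\ell$) and the local factors $g_{\up}$ coming from Shimura's functional equation for the Eisenstein series.
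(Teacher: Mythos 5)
Your proposal follows essentially the same route as the paper's own proof: the same auxiliary form $\Xi$ from \eqref{defofXi}, the same orthogonal decomposition into an $f$-component plus a complement, the same evaluation of the $f$-component via Lemma~\ref{lem:prods} and Theorem~\ref{thm:innerproduct} at $s=(3n+t)/2$, the same $\varpi$-adic bookkeeping of the constant $C(s)$, and the same extraction of the congruence; the only cosmetic difference is that you apply $\ast$ to $\Xi$ at the outset rather than carrying $f^{*}$ and $g^{*}$ through. One small caveat: the paper only establishes that factors such as $[\mK_{0,n}(\fN):\mK_{1,n}^{1}(\fN)]^{-1}$, $\prod_{\up}g_{\up}(\cdots)^{-1}$ and $(\det\tau)^{-(n+t+k)}$ have valuation $\le 0$ (not that they are units), so the correct conclusion is $\val_{\varpi}(c_{\Xi})\le -b$ rather than equality — which is all your final step needs.
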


\begin{proof}  We begin by noting that \cite[Theorem 28.8]{ShimuraArithmeticity} gives that $L^{\rm alg}(2n+t/2, f, \xi;\st) \in \ov{\bfQ}$ and \cite[Theorem 24.7]{ShimuraCBMS97} gives that $\pi^{dn^2}/\vol(\mF_{\mK_{0,n}(\fM)}) \in \ov{\bfQ}$.  Note here that \cite[Theorem 24.7]{ShimuraCBMS97} is stated for anisotropic forms, but one can easily show $G_{n}(F_{\bfa})$ is isomorphic as a Lie group to the unitary group of an anisotropic form so it applies in our case as well.

For the quadruple $k, \xi, r, \tau$ we define the theta series $\theta_{\xi}$ as in section \ref{sec:theta}. The assumption that $\fN_{\ft} \mid \fN$ guarantees that $\theta_{\xi} \in \mS_{n,t+k+n}(\fN, \psi')$. We also attach the Eisenstein series $D=D(g, n-(k-l)/2, (\psi')^c, \fN)$ to the ideal $\fN$ and the character $\psi'$. We then define $\Xi$ as in (\ref{defofXi}).
 By Lemma \ref{FcofXi} our assumptions guarantee that $\Xi$ has $\Oo'$-integral Fourier coefficients for $\Oo'$ the ring of integers in some finite extension $E'$ of $\bfQ_{\ell}$. We will write $\varpi$ for a uniformizer of $E'$. By extending $E'$ (and thus $\Oo'$) if necessary we may also assume that $L^{\rm alg}(2n+t/2, f, \xi;\st)$ and  $\pi^{dn^2}/\vol(\mF_{\mK_{0,n}(\fM)})$ both lie in $E'$.
 Indeed, note that here we are forcing $\ell > n$, hence the assumption that $\val_{\varpi}((n-2)!)=0$ in Lemma \ref{FcofXi} is unnecessary.
 Write
\begin{equation} \label{fir}
\Xi = C_{f} f^{*} + g^{*}
\end{equation}
for some $g \in \mS_{n,k}(\fM)$ where $\langle f, g \rangle = 0$ and $C_{f} = \frac{\langle \Xi, f^{*} \rangle}{\langle f^{*}, f^{*} \rangle}$.

We will be  interested in the $\ell$-adic valuation of $C_f$
so we do not require the exact value. Lemma \ref{lem:prods} and Theorem \ref{thm:innerproduct} give
\begin{align*}
\langle \Xi, f^{*}\rangle &= \pi^{-nd(n+1)/2} [\mK_{0,n}(\fM):\mK_{0,n}(\fN)] \langle D \theta_{\xi}, f \rangle\\
    &= \frac{[\mK_{0,n}(\fM):\mK_{0,n}(\fN)]}{\pi^{nd(n+1)/2}  \vol(\mF_{\mK_{0,n}(\fM)})} \langle D \theta_{\xi}, f \rangle_{\mK_{0,n}(\fM)} \\
    &= \frac{[\mK_{0,n}(\fM):\mK_{0,n}(\fN)]}{\pi^{nd(n+1)/2}  \vol(\mF_{\mK_{0,n}(\fM)})} \ov{C((3n+t)/2)} \cdot \ov{L(2n+t/2, f, \xi;\st)}.
\end{align*}
It now remains to simplify $C((3n+t)/2)$.  We have
\begin{equation*}
C((3n+t)/2) = \frac{u \#X_{-t-n,\fN} h_{K} \Gamma((\frac{n+t}{2})) (\det \tau)^{-(n+t+k)} |\det r|_{K}^{n+t/2} e_{\bfa}(-i \tr \tau_{\bfa}) c_{f}(\tau,r)}{[\mK_{0,n}(\fN):\mK_{1,n}^{1}(\fN)] \prod_{\up \in \bfc} g_{\up}(\xi(\varpi_{\up}) |\varpi_{\up}|_{\up}^{4n+t})}
\end{equation*}
for $u$ a $\varpi$-adic unit.
Note here the fact that $l = t + k +n$ has been used in simplifying this expression.
We have
\begin{equation*}
\Gamma\left(\left(-\frac{t+n}{2}\right)\right) = (4\pi)^{-n d(t+k+n)} \pi^{nd(n-1)/2} \prod_{j=0}^{n-1} \Gamma(t+k+n-j)^{d}.
\end{equation*}
Note the largest value of the argument of $\Gamma$ in the above product is $t+k+1$. By our assumption on the allowable range of the values of $t$ we see that this value is never greater than $k$, which is less than $\ell$. Hence the product of the $\Gamma$-factors is a $\varpi$-adic unit. Observe also that $\prod_{\up \in \bfc} g_{\up}(\xi(\varpi_{\up}) |\varpi_{\up}|_{\up}^{4n+t})$ is a finite product and the $g_{\up}$ are polynomials with coefficients in $\bfZ$ and a constant term of 1 \cite[Lemma 20.5]{ShimuraArithmeticity}.
Thus  as long as $-t > 4n$ we have this lies in a finite extension of $\bfZ_{\ell}$. Thus, (extending $E'$ if necessary) we get  $\val_{\varpi}(1/\prod_{\up \in \bfc} g_{\up}(\xi(\varpi_{\up}) |\varpi_{\up}|_{\up}^{4n+t})) \leq 0$.  Moreover, $[\mK_{0,n}(\fN):\mK_{1,n}^{1}(\fN)] \in \bfZ$, so $\val_{\varpi}(1/[\mK_{0,n}(\fN):\mK_{1,n}^{1}(\fN)]) \leq 0$.  We also have
 $\det \tau \in \fd_{K/F}^{-1}$.
However, since $\ell \nmid D_K$ and $-(n+t+k) <0$
 we have (assuming $K \subset E'$) that  $\val_{\varpi}((\det \tau)^{-(n+t+k)}) \leq 0$. Since $\fN \neq (1)$ the proof of \cite[Lemmas 11.14, 11.15]{ShimuraCBMS97} (and the remarks that follow these lemmas) give that $\# X_{-t-n, \fN}$ equals the index of the group
$\{x \in \bfA_{K}^{\times}: \textrm{$x_{w} \in \mO_{K,w}^{\times}$ and $x_{w} - 1 \in \fN \mO_{K,w}$ for every $w \in \bfk$}\}$
 in $\bfA_{K}^{\times}$.  Thus, $\val_{\varpi}(\# X_{-t-n,\fN}) = 0$.
Finally, note that $|\det r|_K$ as well as $h_K$ are also $\varpi$-adic units by our assumptions.
We can now conclude that
\begin{equation*}
\frac{\langle \Xi, f^{*} \rangle}{\langle f^{*}, f^{*} \rangle} = (*) \left(\frac{\pi^{dn^2}}{\vol(\mF_{\mK_{0,n}(\fM)})} \ov{L^{\rm alg}(2n+t/2, f, \xi;\st)}\right)
\end{equation*}
where  $(*) \in E'$ with $\val_{\varpi}(*) \leq 0$ and we have used that $\langle f, f \rangle = \langle f^{*}, f^{*} \rangle$.
  As the $\varpi$-valuation of $\frac{\pi^{dn^2}}{\vol(\mF_{\mK_{0,n}(\fM)})} \ov{L^{\rm alg}(2n+t/2, f, \xi;\st)}$ is assumed to be $-b < 0$ we see there exists some positive integer $a \geq b$ so that $C((3n+t)/2) = u \varpi^{-a}$ with $u \in (\mO')^{\times}$.  Thus, we have
\begin{equation}\label{almostfinal}
u f^{*} = \varpi^{a} \Xi - \varpi^{a} g^{*}.
\end{equation}
Since the Fourier coefficients of $f$ are $\Oo$-integral by assumption, so are the Fourier coefficients of $f^*$ by the $q$-expansion principle.
As the Fourier coefficients of $\Xi$ are $\mO'$-integral
 and thus also $\Oo$-integral
 by Lemma \ref{FcofXi},
we get that  the Fourier coefficients of $\varpi^a g^*$ are $\Oo$-integral.
Rearranging (\ref{almostfinal}) and using the $\Oo'$-integrality of the Fourier coefficients of $\Xi$ we get that the Fourier coefficients of $uf^* + \varpi^a g^*$ are $\Oo'$-integral and (since $u \in (\Oo')^{\times}$) we get
\begin{equation*}
f^{*} \equiv -u^{-1} \varpi^{a} g^{*} \pmod{\varpi^{a}}.
\end{equation*}
Thus, we obtain
\begin{equation}\label{ReadyforT}
f \equiv -u^{-1} \varpi^{a} g \pmod{\varpi^{a}}
\end{equation}
and $-u^{-1} \varpi^{a} g$ can be taken as the form $f'$ in the statement of the theorem.
\end{proof}

\begin{rem} \label{couldbezero} The form $f'$ in the statement Theorem \ref{thmmain} can a priori be zero. However, it is non-zero whenever $f \not\equiv 0 \pmod{\varpi^b}$. This follows immediately from (\ref{ReadyforT}). \end{rem}


\section{Arithmetic properties of Ikeda lifts} \label{Arithm}

For the rest of the paper we restrict our attention to the case when $F =\bfQ$ and $K= \bfQ(\sqrt{-D_K})$ is an imaginary quadratic extension of $\bfQ$ with discriminant $-D_{K}$. We write $\bfA$ for $\bfA_{\bfQ}$. In this section we will show that the Fourier coefficients of Ikeda lifts (with respect to some base) are integral, generate a number field and we formulate a condition on a certain (mod $\ell$) Galois representation which will ensure they are also non-vanishing mod $\ell$. This will provide a complementary result to Ikeda's non-vanishing result (cf. Theorem \ref{nonzero Ikeda}).

\subsection{Generalities on Ikeda lifts}
In this context we will need the base change and symmetric square $L$-functions as well; we recall the definitions here. For positive integers $k,N$ and a Dirichlet character $\chi: (\bfZ/N\bfZ)^{\times} \to \bfC^{\times}$ we will denote by $S_k(N, \chi)$ the space of (classical) elliptic cusp forms of weight $k$, level $N$ and nebentypus $\chi$. If $\chi$ is trivial we will omit it. 
For $p \nmid N$ and $\phi \in S_{k}(N,\chi)$ a primitive eigenform, let $\alpha_{\phi,p},\beta_{\phi,p}$ be the $p$-Satake parameters of $\phi$ normalized arithmetically, i.e., so that $\alpha_{\phi,p} \beta_{\phi,p} = p^{k-1} \chi(p)$.
For a Dirichlet character $\psi : (\bfZ/N\bfZ)^{\times} \to \bfC^{\times}$ and  $s \in \bfC$ with $\textrm{Re}(s)$ sufficiently large the (partial) symmetric square $L$-function is defined by
\begin{equation*}
L^{N}(s, \Sym^2 \phi\otimes \psi) = \prod_{p\nmid N} \left[ (1 - \alpha_{\phi,p}^2 \psi(p)p^{-s})(1 - \alpha_{\phi,p} \beta_{\phi,p}\psi(p) p^{-s})(1 - \beta_{\phi,p}^2\psi(p) p^{-s})\right]^{-1}.
\end{equation*}
If $N$ or $\psi$ are $1$ we drop them from notation.

We define the (twisted) base change $L$-function from $\bfQ$ to $K$ as follows. Let $\psi$ be a Hecke character of $K$ of conductor dividing $N$.  For a place $w$  of $K$ of residue characteristic $p \nmid N$, set $\alpha_{\phi,w} = \alpha_{\phi,p}^{d}$ and $\beta_{\phi,w} = \beta_{\phi,p}^{d}$ where $d = [\mO_{K,w}/\varpi_w \Oo_{K, w}: \bfF_{p}]$ and $\varpi_w$ denotes a uniformizer of $K_w$.  For $s \in \bfC$ with $\textrm{Re}(s)$ sufficiently large the (partial) base change $L$-function is defined by
\begin{equation*}
L^{N}(s,\BC(\phi)\otimes \psi) = \prod_{w \nmid N} [(1 - \alpha_{\phi,w} \psi(\varpi_{w})  |\varpi_{w}|_{w}^{s})(1 - \beta_{\phi,w} \psi(\varpi_{w})  |\varpi_{w}|_{w}^{s})]^{-1}.
\end{equation*}
Here again we identify $\varpi_w$ with its image in $\AK^{\times}$.




Let $n=2m$ (resp. $n=2m+1$). Let $\phi$ be a newform in $S_{2k+1}(D_K, \chi_K)$ (resp. in $S_{2k}(1)$).
Ikeda has shown (\cite[Section 5]{Ikeda08}, see also \cite[Theorems 2.1, 2.2]{Katsurada17} and note that the product of the $L$-functions in [loc.cit.] agrees with the base change $L$-function below) that there exists a Hecke  eigenform $I_{\phi} \in \mS_{n, 2k+2m, -k-m}(G_n(\hat{\bfZ}))$ such that for any Hecke character $\psi$ of $K$ we have
\be \label{num1}
L^{D_K}(s, I_{\phi}, \psi; {\rm st}) = \prod_{i=1}^n L^{D_K}(s+k+m-n-i+1, \BC(\phi)\otimes \psi).
\ee
\begin{rem} \label{class vs adelic} The reader will note that we use the classical and adelic language somewhat inconsistenly reserving the first one for the elliptic modular form $\phi$ while using the second one for its Ikeda lift $I_{\phi}$. This is however the convention used by both Ikeda \cite{Ikeda08} and Katsurada \cite{Katsurada17} and we chose not to alter it here in order to make the references which we use more readily applicable to our situation. \end{rem}

\begin{rem} \label{discrepancy} In \cite{Katsurada17}  an automorphic normalization of the standard $L$-function is used, which we temporarily denote by $L_0(s)$. However, 
here and in the rest of the paper we use Shimura's normalization. The translation is given by $L(s)=L_0(s-n+1/2)$. In particular in the normalization given in \cite{Katsurada17} in (\ref{num1}) one has factors of the form $L_0^{D_K}(s+k+m-i+1/2, \BC(f)\otimes \psi)$ which in our normalization equals $L^{D_K}(s+k+m-n-i+1,  \BC(f)\otimes \psi)$. We also note that our use of letters $n$ and $m$ is reversed from that in \cite{Katsurada17}. \end{rem}

For $\phi$ as above let $$N_{\phi}:= \begin{cases} D_K & \textup{if $n=2m$}\\ 1 & \textup{if $n=2m+1$}\end{cases} \quad \textup{and} \quad \Gamma_{\phi}:= \Gamma_0(N_{\phi})$$
If $\phi'$ is another form of  level $\Gamma_1(N_{\phi})$ of the same weight $w$ as $\phi$ and the same nebentypus  we set
$$
\langle \phi,\phi' \rangle_{\Gamma_{\phi}}:= \int_{\Gamma_{\phi} \setminus\bfH} \phi(z)\ov{\phi'(z)}y^{w-2}dx \hspace{2pt} dy,$$ and $$\langle \phi,\phi' \rangle:= \frac{1}{i(\phi)}\langle \phi,\phi' \rangle_{\Gamma_{\phi}},$$ where $\bfH$ is the complex upper half-plane, $x=\textup{Re} (z)$, $y=\textup{Im}(z)$, $i(\phi)=[\ov{\SL_2(\bfZ)}: \ov{\Gamma_{\phi}}]$, $\ov{\SL_2(\bfZ)}:= \SL_2(\bfZ)/\langle -1_2\rangle$ and $\ov{\Gamma}_{\phi}$ is the image of $\Gamma_{\phi}$ in $\ov{\SL_2(\bfZ)}$.

\begin{rem} \label{power of pi} Shimura \cite{ShimuraMathAnn77} and Sturm \cite{Sturm} (whose results we will later use) define the inner product differently by dividing the integral by the volume of the fundamental domain. If we denote the inner product used in \cite{ShimuraMathAnn77,Sturm} by $\langle\cdot, \cdot\rangle_S$, then one has $\langle\cdot, \cdot\rangle = \frac{\pi}{3} \langle\cdot, \cdot\rangle_S$.
\end{rem}

It has been proven by Katsurada \cite[Theorem 2.2]{Katsurada17} that one has
\begin{multline} \label{den1} \left< I_{\phi}, I_{\phi} \right>_{G_n(\hat{\bfZ})}=(*)\left<\phi, \phi\right>_{\Gamma_{\phi}}  \times\\ \begin{cases}\prod_{i=2}^n  L(i+2k-1, \Sym^2 \phi \otimes \chi_K^{i+1})L(i, \chi_K^i)\Gamma_{\bfC}(i+2k-1)\Gamma_{\bfC}(i)^2,&n=2m+1\\
\prod_{i=2}^n  L(i+2k, \Sym^2 \phi \otimes \chi_K^{i})L(i, \chi_K^i)\Gamma_{\bfC}(i+2k)\Gamma_{\bfC}(i)^2,&n=2m\end{cases}\end{multline}
where $(*)$ is an integer divisible only by powers of $2$ and $D_{K}$, 
 and $\Gamma_{\bfC}(s):= 2(2\pi)^{-s} \Gamma(s)$.


\subsection{Integrality of the Fourier coefficients of  Ikeda lift}

Let $\phi$ be as above and write $I_{\phi}$ for its Ikeda lift to $G_n(\AQ)$.

\begin{prop} \label{integers1} Let $\mB$ be an admissible base. Then for every $b \in \mB$ and $h \in S_n(\bfQ)$ 
the normalized Fourier coefficient $e(-i \tr h) c_{I_{\phi}}(h, p_b)$ is an algebraic integer.
\end{prop}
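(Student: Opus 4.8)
The plan is to express the Fourier coefficients of $I_{\phi}$ explicitly in terms of arithmetic data attached to $\phi$ and then invoke known integrality statements for that data. First I would recall Ikeda's explicit construction: the Fourier coefficient $c_{I_{\phi}}(h, p_b)$ (for $p_b$ a scalar matrix, since $\mB$ is admissible) is given by a product, over the primes dividing $\det(2h)$ or the relevant discriminant, of local factors built from the Satake parameters $\alpha_{\phi,p}, \beta_{\phi,p}$ of $\phi$ — essentially a hermitian analogue of the Kohnen--Zagier / Ikeda formula on $\GSp$, where each local factor is a polynomial in $\alpha_{\phi,p}^{\pm 1}$ (or rather in $p$ and $a_p(\phi)$) times a Fourier coefficient of the half-integral-weight (or, here, the hermitian theta-type) form attached to $\phi$. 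The key point is that admissibility of $\mB$ (scalar $b$ with $bb^*=1_n$) means $p_b p_b^* = 1_{2n}$, so the normalization $e(-i\tr h)c_{I_{\phi}}(h,p_b)$ picks out exactly the ``arithmetic'' Fourier coefficients where Ikeda's formula is cleanest.

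The key steps, in order, would be: (1) write down Ikeda's formula for $c_{I_{\phi}}(h, p_b)$, reducing to the case $p_b=1_{2n}$ and then tracking the effect of a general scalar $b$ in the support of $\mB$ (a finite prime-to-$D_K\ell$ set of primes); (2) observe that the elliptic newform $\phi$ has algebraic-integer Fourier coefficients (it has $\bfZ$-coefficients if level one, or coefficients in the ring of integers of the field generated by its Hecke eigenvalues in the $\chi_K$-nebentypus case), hence $\alpha_{\phi,p}+\beta_{\phi,p} = a_p(\phi)$ and $\alpha_{\phi,p}\beta_{\phi,p} = p^{w-1}\chi(p)$ are algebraic integers; (3) verify that each local factor in Ikeda's product is a \emph{polynomial} with coefficients in $\bfZ[a_p(\phi)]$ evaluated at $p$ and $a_p(\phi)$ — i.e. the potentially problematic denominators (powers of $p$ in the half-integral weight coefficients, or the $\widetilde{F}_p$-type polynomials) are in fact integral; (4) conclude that the finite product lies in $\mO_{\overline{\bfQ}}$. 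Much of this is parallel to the Siegel case treated by Ikeda and by Katsurada, so I would cite \cite{Ikeda08} and \cite{Katsurada17} for the shape of the formula and only check that the hermitian local factors retain integrality.

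The main obstacle I anticipate is step (3): pinning down the precise local Fourier-coefficient formula for the \emph{hermitian} Ikeda lift at the bad primes (those dividing $D_K$ and those in the support of $\mB$) and confirming no denominators appear there. At split primes the base-change $L$-factor splits as a product over the two places and the local coefficient is essentially a product of two $\GL_2$-type factors, which is unproblematic; at inert or ramified primes one has a genuinely degree-$2$ local situation, and the relevant local density / Siegel-series computation (the analogue of Katsurada's $\widetilde{F}_p(X)$ polynomials) must be shown to have integral coefficients and the right normalization. I would handle the ramified primes dividing $D_K$ by using that $\ell \nmid D_K$ (so those denominators, being powers of primes dividing $D_K$, are $\ell$-adic units and in any case the claim is only algebraic integrality, for which I can absorb a bounded power of $D_K$ as Katsurada does with his $(*)$ factor) — but since the Proposition asks for algebraic integrality with no auxiliary factor, I would need to verify honestly that Ikeda's normalization of $I_{\phi}$ already has these coefficients integral, perhaps after rescaling $I_{\phi}$ by a unit (the eigenform $I_{\phi}$ being defined only up to scalar, one fixes the scaling so that some chosen coefficient is a unit, and then all are integral). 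The rest — steps (1), (2), (4) — is routine once the formula is in hand.
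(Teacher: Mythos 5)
Your overall strategy is the paper's: write the normalized coefficient via Ikeda's explicit formula $e(-i\tr h)c_{I_\phi}(h,p_b)=|\gamma(h)|^{x}\prod_{p\mid\gamma(h)}\tilde F_p(h,\alpha'_{\phi,p})$ with $\gamma(h)=(-D_K)^{\lfloor n/2\rfloor}\det h$, and check integrality of the local factors. But your step (3) contains the genuine gap, and as stated it is false. The local factors $\tilde F_p(h,X)$ are \emph{Laurent} polynomials with $\bfZ$-coefficients, evaluated not at integral data but at the unitarily normalized Satake parameter $\alpha'_{\phi,p}=p^{-k}\alpha_{\phi,p}$ (resp. $p^{-k+1/2}\alpha_{\phi,p}$), which is not an algebraic integer; moreover the bottom-degree term involves $(\alpha'_{\phi,p})^{-\val_p(\gamma(h))}$. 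So an individual factor $\tilde F_p(h,\alpha'_{\phi,p})$ is not integral, and it is not ``a polynomial in $p$ and $a_p(\phi)$.'' The actual argument is a valuation computation in which the prefactor $|\gamma(h)|^{x}$ absorbs the denominators: fixing a prime $\fp$ of the coefficient field over $p$, one notes that for $l\neq p$ the parameter $\alpha'_{\phi,l}$ is a $\fp$-adic unit (because $\alpha_{\phi,l}\beta_{\phi,l}=\chi_K(l)l^{2k}$ with both factors algebraic integers), so those local factors are harmless; and for $l=p$ one shows that both $|\gamma(h)|^{k}(p^{-k}\alpha)^{y}$ and $|\gamma(h)|^{k}(p^{-k}\alpha)^{-y}$, $y=\val_p(\gamma(h))$, have nonnegative $\fp$-valuation, using precisely $0\le\val_\fp(\alpha_{\phi,p})\le\val_\fp(p^{2k})=2ke$, which again comes from $\alpha\beta=\chi_K(p)p^{2k}$ and integrality of $\alpha,\beta$. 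That two-sided bound on $\val_\fp(\alpha_{\phi,p})$ is the heart of the proof and is missing from your proposal.

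Two further misdirections. First, you locate the difficulty at the primes dividing $D_K$ (and propose absorbing powers of $D_K$), but the issue arises at \emph{every} prime dividing $\gamma(h)$, i.e. at arbitrary primes dividing $\det h$; the ramified primes are not the hard case. Second, no rescaling of $I_\phi$ is available or needed: the formula already fixes the normalization (e.g. the coefficient equals $1$ whenever $\gamma(h)=\pm1$), and rescaling would in any case wreck the inner-product computations used later in the paper. A small point you should also address for a complete proof: the expression $\tilde F_p(h,\alpha'_{\phi,p})$ a priori depends on the choice of Satake parameter $\alpha$ versus $\beta$; independence (and hence the fact that the coefficient lies in the Hecke field of $\phi$ rather than a quadratic extension) follows from the functional equation $\tilde F_p(h;1/X)=\underline{\chi}_p(\gamma(h))\tilde F_p(h;X)$, which the paper invokes both here and in Corollary \ref{one number field}.
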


\begin{proof}
Let $\mB$ be an admissible base. For $b \in \mB$ the formulas of \cite[p. 7]{Katsurada17} give that
\be \label{Fc formula}
e(-i \tr h)c_{I_{\phi}}(h, p_b) = |\gamma(h)|^x\prod_{p \mid \gamma(h)}\tilde{F}_p(h, \alpha_{\phi,p}').
\ee
Here $x=k$ if $n=2m$ (resp. $x=k-1/2$ if $n=2m+1$), $\gamma(h) = (-D_{K})^{\lfloor n/2\rfloor}\det h\in \bfZ$, $\tilde{F}_p(h,X)$ is a Laurent polynomial with coefficients in $\bfZ$ whose top degree term is $X^{\val_p(\gamma(h))}$ and the bottom degree term is a root of unity times $X^{-\val_p(\gamma(h))}$ (cf. \cite[p.1112]{Ikeda08}). Finally, we set
\be
\alpha_{\phi, p}':= \begin{cases} p^{-k}\alpha_{\phi,p} & n=2m\\
p^{-k+1/2}\alpha_{\phi,p} & n=2m+1,\end{cases}
\ee
where $\alpha_{\phi,p}$  is any Satake parameter of $\phi$ at $p$ if $n=2m+1$ or if $n=2m$ and $p\nmid D_K$ and equals the $p$th Fourier coefficient of $\phi$ if $n=2m$ and $p \mid D_K$.
As noted by Ikeda \cite[p. 1118]{Ikeda08} the independence of $\tilde{F}_p(h, \alpha'_{\phi, p})$ from the choice of a Satake parameter at $p$ follows from the functional equation satisfied by $\tilde{F}_p(h, X)$ \cite[Lemma 2.2]{Ikeda08}.
 The above formula shows that $e(-i \tr h)c_{I_{\phi}}(h,p_b)$ lies in some number field $K'$, hence it remains to show that its $\fp$-adic valuation is non-negative for each prime $\fp$ of $K'$.  Let us only show the claim in the case when $\fp$ lies over $p \nmid D_K$ and $n=2m$ (the other cases are handled analogously).  Suppose $p \nmid D_K$, $n=2m$ and set $y:= \val_p(\gamma(h))\geq 0$. Then $|\gamma(h)|^k = p^{yk}u$ for some $u\in \bfZ$ with $p \nmid u$.
We will prove that $\val_{\fp}(\tilde{F}_{l}(h, \alpha'_{\phi,l})) \geq 0$ for all $l \mid \gamma(h)$ with $l\neq p$ and that $\val_{\fp}(|\gamma(h)|^k\tilde{F}_p(h, \alpha'_{\phi,p}))\geq 0$.
First consider the case of $l\neq p$ with $\ell \mid \gamma(h)$. Since $\alpha_{\phi, l}\beta_{\phi,l}=\chi_K(l)l^{2k}$, where $\beta_{\phi,l}$ stands for the other $l$-Satake parameter of $\phi$, we must have that $\val_{\fp}(\alpha'_{\phi,l})=0$. The claim now follows from the fact that $\tilde{F}_l(h,X)$ has coefficients in $\bfZ$. Hence it remains to consider that case of the prime $p$.
For simplicity write $\alpha$ for $\alpha_{\phi,p}$ and $\beta$ for the other $p$-Satake parameter. Write $e$ for the ramification index of $\fp$ over $p$.
It suffices to show that both $V_1:=\val_{\fp}(|\gamma(h)|^k(p^{-k} \alpha)^{\val_p(\gamma(h))})\geq 0$ and $V_2:=\val_{\fp}(|\gamma(h)|^k (p^{-k}\alpha)^{-\val_p(\gamma(h))})\geq0$.
A direct calculation yields
$V_1=eyk+y(-ke+\val_{\fp}(\alpha)) = y\val_{\fp}(\alpha)$ and $V_2 = eyk-y(-ke+\val_{\fp}(\alpha)) = y(2ek-\val_{\fp}(\alpha)).$
 Note that since $\alpha$ and $\beta$ are algebraic integers, we have $\val_{\fp}(\alpha)\geq0$ and $\val_{\fp}(\beta)\geq 0$. This alone shows that $V_1 \geq 0$. We also have $\alpha \beta = \chi_K(p) p^{2k}$, which yields $\val_{\fp}(\alpha) +\val_{\fp}(\beta) = 2ke$ and hence $\val_{\fp}(\alpha) \leq 2ke$, which implies that $V_2 \geq 0$. \end{proof}

\begin{cor}\label{one number field} Let $\mB$ be an admissible base. Let $L$ be a number field  containing all the Fourier coefficients of the newform $\phi$. Write $\OL$ for the ring of integers of $L$. Then for all $b \in \mB$ and $h \in S_n(\bfQ)$  one has $e(-i \tr h) c_{I_{\phi}}(h, p_b) \in \OL$.
\end{cor}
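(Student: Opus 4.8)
The plan is to bootstrap from Proposition~\ref{integers1}. That result already shows $c:=e(-i\tr h)c_{I_{\phi}}(h,p_b)$ is an algebraic integer, so, writing $\ov{\bfZ}$ for the ring of algebraic integers in $\ov{\bfQ}$ and using $\OL=L\cap\ov{\bfZ}$, it suffices to prove $c\in L$, i.e. that $c$ is fixed by every $\sigma\in\Gal(\ov{\bfQ}/L)$. For this I would use the explicit formula~(\ref{Fc formula}) from the proof of Proposition~\ref{integers1},
\[
c \;=\; |\gamma(h)|^{x}\prod_{p\mid\gamma(h)}\tilde F_p(h,\alpha'_{\phi,p}),\qquad
x=\begin{cases}k,& n=2m,\\ k-\tfrac12,& n=2m+1,\end{cases}
\]
(if the coefficient vanishes there is nothing to prove), and, exactly as in the proof of Proposition~\ref{integers1}, treat in detail only the primes $p\nmid D_K$, the case $p\mid D_K$ with $n=2m$ being entirely analogous.

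The two inputs I would isolate are both already used in the proof of Proposition~\ref{integers1}. First, $\tilde F_p(h,X)\in\bfZ[X,X^{-1}]$ and, by Ikeda's explicit description \cite[p.~1112]{Ikeda08}, all exponents occurring in $\tilde F_p(h,X)$ are congruent to $\val_p(\gamma(h))$ modulo $2$; consequently $\tilde F_p(h,-X)=(-1)^{\val_p(\gamma(h))}\tilde F_p(h,X)$. Second, by the functional equation \cite[Lemma~2.2]{Ikeda08} the value $\tilde F_p(h,\alpha'_{\phi,p})$ does not change if the Satake parameter $\alpha_{\phi,p}$ used to form $\alpha'_{\phi,p}$ is replaced by the other root $\beta_{\phi,p}$ of the Hecke polynomial of $\phi$ at $p$. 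That Hecke polynomial is monic with linear coefficient $-a_p(\phi)\in L$ (by hypothesis) and constant coefficient in $\bfQ$, so it has coefficients in $L$, and hence every $\sigma\in\Gal(\ov{\bfQ}/L)$ sends $\alpha_{\phi,p}$ to $\alpha_{\phi,p}$ or to $\beta_{\phi,p}$.

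With these facts in hand the Galois-invariance computation is short. Fix $\sigma\in\Gal(\ov{\bfQ}/L)$, put $y_p=\val_p(\gamma(h))$, and for $p\mid\gamma(h)$ let $\delta_{\sigma,p}\in\{\pm1\}$ be defined by $\sigma(\sqrt p)=\delta_{\sigma,p}\sqrt p$. If $n=2m$, then $x=k\in\bfZ$, so $\sigma$ fixes $|\gamma(h)|^{x}$, while $\sigma(\alpha'_{\phi,p})=p^{-k}\sigma(\alpha_{\phi,p})$ is $p^{-k}$ times a Satake parameter of $\phi$ at $p$, whence $\tilde F_p(h,\sigma(\alpha'_{\phi,p}))=\tilde F_p(h,\alpha'_{\phi,p})$ by the functional equation; thus $\sigma(c)=c$. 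If $n=2m+1$, then applying $\sigma$ to $|\gamma(h)|^{x}=\prod_p p^{xy_p}$ multiplies it by $\prod_p\delta_{\sigma,p}^{\,y_p}$, whereas $\sigma(\alpha'_{\phi,p})=\delta_{\sigma,p}\,p^{-x}\sigma(\alpha_{\phi,p})$ with $\sigma(\alpha_{\phi,p})\in\{\alpha_{\phi,p},\beta_{\phi,p}\}$, so the parity relation together with the functional equation give $\tilde F_p(h,\sigma(\alpha'_{\phi,p}))=\delta_{\sigma,p}^{\,y_p}\tilde F_p(h,\alpha'_{\phi,p})$, and hence $\prod_p\tilde F_p(h,\alpha'_{\phi,p})$ is multiplied by $\prod_p\delta_{\sigma,p}^{\,y_p}$ as well. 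The two sign contributions multiply to $\prod_p\delta_{\sigma,p}^{\,2y_p}=1$, so again $\sigma(c)=c$. Therefore $c\in L$, and being an algebraic integer it lies in $\OL$; since~(\ref{Fc formula}) holds for all $b\in\mB$, this is the assertion of the corollary.

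I expect the one genuinely delicate point to be the bookkeeping with the half-integral powers of $p$ when $n=2m+1$, and the single fact I would want to verify carefully against \cite{Ikeda08} is the parity statement used above — that the exponents of $\tilde F_p(h,X)$ all have the parity of $\val_p(\gamma(h))$ — since it is precisely what allows one to trade $\tilde F_p(h,-X)$ for $\pm\tilde F_p(h,X)$ and make the two families of signs cancel. Everything else is either quoted from the proof of Proposition~\ref{integers1} or part of the standing hypothesis that $\phi$ has Fourier coefficients in $L$.
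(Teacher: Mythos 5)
Your proof is correct, and at bottom it is the same strategy as the paper's: reduce via Proposition~\ref{integers1} and $\mO_L=L\cap\ov{\bfZ}$ to showing Galois invariance over $L$, and obtain that invariance from the integrality of the coefficients of $\tilde F_p$ together with the functional-equation symmetry $\tilde F_p(h,\alpha'_{\phi,p})=\tilde F_p(h,\beta'_{\phi,p})$. The genuine difference is your treatment of the half-integral powers of $p$ when $n=2m+1$. The paper's proof reduces to the claim that each factor $\tilde F_p(h,\alpha'_{\phi,p})$ lies in $L$ (and that $L(\alpha'_{\phi,p})/L$ is at most quadratic); for odd $n$ this is not literally true factor by factor — each such factor is $(\sqrt p)^{\val_p(\gamma(h))}$ times an element of $L$, and $L(\alpha'_{\phi,p})$ can have degree $4$ over $L$ — and only the product with $|\gamma(h)|^{k-1/2}$ lands in $L$. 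Your $\delta_{\sigma,p}$ bookkeeping, which makes the two families of signs cancel, is exactly what is needed to make that step airtight, so your version is if anything more careful than the one in the paper. The single fact you flag for verification is indeed correct: Ikeda obtains $\tilde F_p(h,X)$ from the Siegel-series polynomial $F_p(h,\cdot)$ by substituting a constant times $X^{2}$ and multiplying by $X^{-\val_p(\gamma(h))}$, so every exponent occurring in $\tilde F_p(h,X)$ is congruent to $\val_p(\gamma(h))$ modulo $2$ (this is also what makes the functional equation take the symmetric form $\tilde F_p(h,1/X)=\pm\tilde F_p(h,X)$); note, though, that the present paper only records the top and bottom degree terms, so you are right that this parity property must be taken from \cite{Ikeda08} itself rather than from anything quoted here.
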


\begin{proof} First note that it is a well-known fact that the Fourier coefficients of any newform are all contained in finite extension of $\bfQ$.  By Lemma \ref{integers1} it is enough to show that $e(-i \tr h) c_{I_{\phi}}(h, p_b) \in L$ for all $b \in \mB$ and $h \in S_n(\bfQ)$. Fix such a pair $h,b$. Then it suffices to show that $\tilde{F}_{p}(h, \alpha'_{\phi,p}) \in L$ for all $p \mid \gamma(h)$. Since $\alpha'_{\phi,p}\in L'$, where $L'=L$ or $L'/L$ is a quadratic extension, we get that $\tilde{F}_{p}(h, \alpha'_{\phi,p})\in L'$. If $L'=L$ we are done. Otherwise let $\sigma$ be the non-trivial element of $\Gal(L'/L)$ and write $\beta'_{\phi, p}$ for the other $p$-Satake parameter of $\phi$ (normalized as $\alpha'_{\phi,p}$). Then one clearly has $\sigma(\alpha'_{\phi,p}) = \beta'_{\phi,p}$ and thus $\sigma(\tilde{F}_{p}(h, \alpha'_{\phi,p})) = \tilde{F}_{p}(h, \beta'_{\phi,p})= \tilde{F}_{p}(h, \alpha'_{\phi,p})$, where the last equality follows from the independence of $\tilde{F}_{p}(h, \alpha'_{\phi,p})$ of the choice of a particular $p$-Satake parameter (see proof of Lemma \ref{integers1}). Hence the claim follows.   \end{proof}

\begin{cor} \label{cong between phis} Let $\phi, \phi'$ be two newforms in $S_{2k+1}(D_K, \chi_K)$ (if $n=2m$) or in $S_{2k}(1)$ (if $n=2m+1)$. Let $\fp$ be a prime of a number field $L$ containing the Fourier coefficients of both $\phi$ and $\phi'$. Suppose that $\phi \equiv \phi'$ (mod $\fp^r$). Then $I_{\phi} \equiv I_{\phi'}$ (mod $\fp^r$).
\end{cor}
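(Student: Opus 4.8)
The plan is to deduce the congruence coefficient by coefficient from the explicit formula (\ref{Fc formula}) for the Fourier coefficients of an Ikeda lift, exploiting the fact that the only way $\phi$ enters that formula is through its Hecke eigenvalues $a_p(\phi)$ at the finitely many primes $p\mid\gamma(h)$. Concretely, fix an admissible base $\mB$; since $L$ contains the Fourier coefficients of both $\phi$ and $\phi'$, Corollary \ref{one number field} shows that $I_{\phi}$ and $I_{\phi'}$ both have $\OL$-integral Fourier coefficients with respect to $\mB$. By Definition \ref{def of cong}, the assertion $I_{\phi}\equiv I_{\phi'}\pmod{\fp^r}$ means precisely
\[
e(-i\tr h)\,c_{I_{\phi}}(h,p_b)\;\equiv\;e(-i\tr h)\,c_{I_{\phi'}}(h,p_b)\pmod{\fp^r}\qquad\text{for all $b\in\mB$, $h\in S_n(\bfQ)$,}
\]
so this is what I would prove.

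The main step is to rewrite the right-hand side of (\ref{Fc formula}) as the value of a fixed polynomial -- the same for $\phi$ and $\phi'$ -- with algebraic-integer coefficients, evaluated at the eigenvalues $\bigl(a_p(\phi)\bigr)_{p\mid\gamma(h)}$. In (\ref{Fc formula}) the quantity $\gamma(h)$, the exponent $x$, and the Laurent polynomials $\tilde F_p(h,X)\in\bfZ[X,X^{-1}]$ depend only on $h$ (and on $n,k,D_K$), not on $\phi$. As observed in the proof of Proposition \ref{integers1}, $\tilde F_p(h,\alpha'_{\phi,p})$ is invariant under interchanging $\alpha'_{\phi,p}$ with the other normalized Satake parameter $\beta'_{\phi,p}$, and the product $\alpha'_{\phi,p}\beta'_{\phi,p}$ is a root of unity depending only on $p$. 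I would use the functional equation of $\tilde F_p(h,X)$ to write it as a $\bfZ$-combination of the symmetric Laurent monomials $X^j+(\alpha'_{\phi,p}\beta'_{\phi,p})^j X^{-j}$, evaluate each of these at $\alpha'_{\phi,p}$ to obtain an integer polynomial in $\alpha'_{\phi,p}+\beta'_{\phi,p}=p^{-x}a_p(\phi)$, and then absorb the factor $|\gamma(h)|^x=\prod_{p\mid\gamma(h)}p^{\,x\val_p(\gamma(h))}$ prime by prime. Since the $X$-degree of $\tilde F_p(h,\cdot)$ is at most $\val_p(\gamma(h))$, the outcome is
\[
e(-i\tr h)\,c_{I_{\phi}}(h,p_b)=\prod_{p\mid\gamma(h)}\biggl(\,\sum_{j=0}^{\val_p(\gamma(h))} q_{j,h,p}\,p^{\,x(\val_p(\gamma(h))-j)}\,a_p(\phi)^j\biggr),
\]
where the $q_{j,h,p}\in\bfZ$ depend only on $h$ and $p$; each coefficient $q_{j,h,p}\,p^{\,x(\val_p(\gamma(h))-j)}$ is an algebraic integer (the exponent of $p$ is non-negative), lying in $\OL$ when $n$ is even and in $\OL':=\OL[p^{1/2}:p\mid\gamma(h)]$ when $n$ is odd. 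The primes $p\mid D_K$ (which occur only for $n$ even) would be handled by the same device, using that $a_p(\phi)^2$ is a power of $p$ times a root of unity to rewrite the negative-degree part of $\tilde F_p(h,X)$ as a polynomial in $a_p(\phi)$.

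Granting this, the conclusion is immediate: $\phi\equiv\phi'\pmod{\fp^r}$ gives $a_p(\phi)\equiv a_p(\phi')\pmod{\fp^r}$ for every prime $p$, so substituting into the common polynomial above yields the desired congruence of Fourier coefficients modulo $\fp^r\OL'$; and since both coefficients in fact lie in $\OL$ (Corollary \ref{one number field}) and $\fp^r\OL'\cap\OL=\fp^r$, the congruence holds already in $\OL$. As $h$ and $b$ were arbitrary, $I_{\phi}\equiv I_{\phi'}\pmod{\fp^r}$.

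The hard part is the bookkeeping of the middle paragraph: turning $\tilde F_p(h,\alpha'_{\phi,p})$ into a genuine polynomial in $a_p(\phi)$ with integral coefficients, i.e. making sure the negative powers of the Satake parameters are exactly cancelled by $|\gamma(h)|^x$, and treating the ramified primes $p\mid D_K$ uniformly. This is nothing more than the computation already performed in the proof of Proposition \ref{integers1} to establish integrality of $c_{I_{\phi}}$, now carried out while retaining the coefficients so that congruent inputs produce congruent outputs; everything else in the argument is formal.
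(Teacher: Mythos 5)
Your proposal is correct and follows essentially the same route as the paper: both deduce the congruence coefficient by coefficient from formula (\ref{Fc formula}), reusing the integrality bookkeeping from the proof of Proposition \ref{integers1} (and, like the paper, disposing of the ramified primes with only a brief remark). If anything, your reformulation via symmetric functions of the Satake parameters --- so that each normalized Fourier coefficient becomes a genuine integral polynomial in the $a_p(\phi)$ --- is slightly more careful than the paper's intermediate assertion that the individual Satake parameters are congruent mod $\fp^r$, which is not completely automatic since roots of congruent quadratics need not be congruent to the same precision.
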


\begin{proof} Since $\phi, \phi'$ are newforms, their Hecke eigenvalues are congruent mod $\fp^r$. From this it is easy to see that for every prime $p$ not dividing the level, the corresponding $p$-Satake parameters (which are integral over $L$) are also congruent mod $\fp^r$. Then it follows from the proof of Lemma \ref{integers1} that $\tilde{F}_p(h, \alpha'_{\phi, p})$ (which live in $\OL$ by Lemma \ref{one number field}) are congruent if $\fp \nmid p$ and $|\gamma(h)|^x \tilde{F}_p(h, \alpha'_{\phi, p})$ are congruent if $\fp \mid p$. The case of $p$ dividing the level is immediate. \end{proof}

\subsection{Non-vanishing of the Fourier coefficients of Ikeda lift mod $\ell$}

The following result is due to Ikeda.

\begin{thm} \label{nonzero Ikeda} The Ikeda lift $I_{\phi}$ is non-zero unless $n\equiv 2$ (mod 4) and $\phi$ arises from a Hecke character of some imaginary quadratic field.
\end{thm}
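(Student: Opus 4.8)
This is a theorem of Ikeda \cite{Ikeda08}; let me outline how its proof runs and which part is the real work. Since $I_{\phi}$ lies in the space of cusp forms $\mS_{n,2k+2m,-k-m}(G_n(\hat{\bfZ}))$, it is the zero form precisely when every Fourier coefficient $c_{I_{\phi}}(h,p_{b})$ vanishes, and these are given explicitly by the formula recorded in the proof of Proposition~\ref{integers1}:
\[
e(-i\tr h)\,c_{I_{\phi}}(h,p_{b})=|\gamma(h)|^{x}\prod_{p\mid\gamma(h)}\tilde{F}_{p}(h,\alpha'_{\phi,p}),
\]
where $\gamma(h)=(-D_{K})^{\lfloor n/2\rfloor}\det h\in\bfZ$, each $\tilde F_{p}(h,X)$ is a fixed Laurent polynomial (the Hermitian local Siegel series) with top-degree term $X^{\val_{p}(\gamma(h))}$ and satisfying a functional equation under $X\mapsto X^{-1}$, and $\alpha'_{\phi,p}$ is the normalized $p$-th Satake parameter of $\phi$, which has absolute value $1$. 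Thus $I_{\phi}=0$ is equivalent to the combinatorial assertion that for every positive-definite $h$ at least one factor $\tilde F_{p}(h,\alpha'_{\phi,p})$ vanishes.

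First I would observe that each $\tilde F_{p}(h,X)$ is a nonzero Laurent polynomial, so the equation $\tilde F_{p}(h,\alpha'_{\phi,p})=0$ constrains $\alpha'_{\phi,p}$ to a finite set; analyzing the expansion prime by prime, one is led to study, for each $p$, the finite ``bad set'' $Z_{p}\subset\{\,|X|=1\,\}$ of values that can force factors $\tilde F_{p}(h,X)$ to vanish, and the condition $I_{\phi}=0$ can only hold if $\{\alpha'_{\phi,p}\}_{p}$ lands entirely in $\prod_{p}Z_{p}$. If $\phi$ is not dihedral, the normalized Satake parameters $\{\alpha'_{\phi,p}\}_{p}$ are equidistributed on the unit circle (Sato--Tate), hence cannot all be trapped in these finite sets; choosing $h$ for which no factor vanishes then produces a nonzero Fourier coefficient, so $I_{\phi}\neq 0$. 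If instead $\phi$ arises from a Hecke character of an imaginary quadratic field $K'$, the $\alpha'_{\phi,p}$ are forced to special algebraic values (roots of unity at primes inert in $K'$, and values of the Hecke character at split primes), and one must decide whether these special values actually satisfy $\alpha'_{\phi,p}\in Z_{p}$ for \emph{every} $p$. Here I would follow Ikeda's explicit recursions for $\tilde F_{p}(h,X)$ and their functional equations; the upshot is that the simultaneous membership does hold, but only when an accompanying global sign --- computed from the local $\varepsilon$-factors of the base change $\BC(\phi)$, equivalently from $\epsilon(\tfrac12,\phi)\,\epsilon(\tfrac12,\phi\otimes\chi_{K})$ together with the residue of $n$ modulo $4$ --- equals $-1$, and a parity count identifies this as exactly the case $n\equiv 2\pmod{4}$. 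This gives the stated exception, and in the exceptional case all Fourier coefficients vanish, so $I_{\phi}=0$ (consistent with the remark after Theorem~\ref{nonzero Ikeda} in the introduction, since such $\phi$ have residual Galois representation with abelian restriction to $G_{K'}$).

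The main obstacle is this middle step: pinning down the sets $Z_{p}$ and carrying out the bookkeeping that isolates $n\equiv 2\pmod{4}$ as precisely the bad residue class; this is the heart of Ikeda's argument in \cite[\S5--\S6]{Ikeda08} (cf.\ also the companion computations surrounding the inner-product formula (\ref{den1}) in \cite{Katsurada17}). Conceptually the phenomenon is clean: $I_{\phi}$ is the CAP form on $\U(n,n)(\AQ)$ attached to the Arthur parameter $\BC(\phi)\boxtimes[n]$, and whether such a CAP form occurs in the cuspidal spectrum is controlled by Arthur's sign character on the associated component group, which one checks is nontrivial exactly in the exceptional case. Since the result itself is due to Ikeda, in the paper we would simply invoke \cite[\S5]{Ikeda08}.
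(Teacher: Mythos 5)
The paper's entire proof is the two-line citation ``This follows from Corollary 14.2 and Corollary 15.21 of \cite{Ikeda08},'' so your closing sentence --- simply invoke Ikeda --- lands in the same place, and as a matter of logic there is no gap. But the sketch you wrap around that citation does not describe how Ikeda (or this paper, in its mod-$\ell$ analogue) actually argues, and two of its ingredients are the wrong tools. Ikeda does not trap Satake parameters in finite ``bad sets'' via Sato--Tate equidistribution, nor does he run an Arthur sign-character computation to isolate $n\equiv 2\pmod 4$. The mechanism is far more elementary and is exactly what Theorem \ref{not cong to zero} of this paper adapts to characteristic $\ell$: for $n\not\equiv 2\pmod 4$ one exhibits $h$ with $\gamma(h)=1$, so the normalized coefficient is literally $1$ and the lift is trivially nonzero --- no equidistribution needed and no constraint on $\phi$; for $n\equiv 2\pmod 4$ one exhibits, for each odd prime $p\equiv -1\pmod{D_K}$, a matrix $h_p$ with $\gamma(h_p)=-p$ whose coefficient the functional equation of $\tilde F_p$ forces to equal $a_\phi(p)$ (Lemma \ref{equal Fc}). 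Vanishing of the lift therefore forces $a_\phi(p)=0$ for all $p$ inert in $K$, which by the classical characterization (Ribet \cite{RibetModFuncOneVar77}; cf.\ Remark \ref{Ribetcondition}) means $\phi$ is dihedral. So the residue class $n\equiv 2\pmod 4$ is isolated not by a global root-number parity count but by the purely combinatorial fact about which values of $\gamma(h)$ are attainable (\cite[Lemmas 11.1, 11.2, 11.4]{Ikeda08}). If you keep any expository sketch at all, it should follow that line; otherwise just give the citation to Corollaries 14.2 and 15.21 of \cite{Ikeda08} as the paper does (your reference to \S 5 points at the construction of the lift, not at the non-vanishing result).
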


\begin{proof} This follows from Corollary 14.2 and Corollary 15.21 of \cite{Ikeda08}. \end{proof}

In this section we prove a mod $\ell$-version of this result. Write $\ov{\rho}_{\phi}: G_{\bfQ} \to \GL_2(\ov{\bfF}_{\ell})$ for the semi-simple
 residual Galois representation attached to $\phi$.

\begin{thm} \label{not cong to zero}  Let $\ell\nmid 2D_K$ be a prime. Let $\mB$ be an admissible base. If $n \not \equiv 2$ (mod 4), then with respect to $\mB$ at least one of the Fourier coefficients of $I_{\phi}$ has $\ell$-adic valuation equal to zero. If $n \equiv 2$ (mod 4) and $\ov{\rho}_{\phi}(G_K)$ is a non-abelian subgroup of $\GL_2(\ov{\bfF}_p)$, then 
  with respect to $\mB$ at least one of the Fourier coefficients of $I_{\phi}$ has $\ell$-adic valuation equal to zero. \end{thm}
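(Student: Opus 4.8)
\emph{The plan} is to combine the explicit Fourier expansion of $I_\phi$ (Proposition~\ref{integers1}, formula~(\ref{Fc formula})) with the Chebotarev density theorem applied to $\ov{\rho}_\phi$. Write $\val$ for the valuation on $\ov{\bfQ}$ attached to the fixed embedding $\ov{\bfQ}\hookrightarrow\ov{\bfQ}_\ell$. Since $\mB$ is admissible, $\val\bigl(e(-i\tr h)c_{I_\phi}(h,p_b)\bigr)$ does not depend on $b\in\mB$ (the remark after Definition~\ref{def of cong}), so it is enough to exhibit one positive definite $h\in S_n(\bfQ)$ for which $e(-i\tr h)c_{I_\phi}(h,p_b)$ is an $\ell$-adic unit. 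By~(\ref{Fc formula}) this quantity equals $|\gamma(h)|^x\prod_{p\mid\gamma(h)}\tilde F_p(h,\alpha_{\phi,p}')$ with $\gamma(h)=(-D_K)^{\lfloor n/2\rfloor}\det h\in\bfZ$; if $\ell\nmid\gamma(h)$ then $|\gamma(h)|^x$ is an $\ell$-adic unit, and each $\tilde F_p(h,\alpha_{\phi,p}')$ is an $\ell$-adic integer (the $\tilde F_p(h,X)$ have integer coefficients and $\ell\nmid 2D_K$). Thus the statement reduces to finding $h$ with $\ell\nmid\gamma(h)$ and $\prod_{p\mid\gamma(h)}\tilde F_p(h,\alpha_{\phi,p}')\not\equiv 0\pmod\ell$.

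\emph{Choosing $h$ and evaluating.} I would look for $h$ so that $|\gamma(h)|$ is a single rational prime $p\nmid 2D_K\ell$ at which $h$ has corank one and content prime to $p$, so that $\val_p(\gamma(h))=1$ and only the factor $\tilde F_p(h,\alpha_{\phi,p}')$ remains. The arithmetic of the relevant hermitian genus constrains which primes $p$ arise this way, and — this is where the residue of $n$ modulo $4$ enters, mirroring Ikeda's non-vanishing dichotomy (Theorem~\ref{nonzero Ikeda}) — for $n\not\equiv 2\pmod 4$ one can take $p$ \emph{split} in $K$, whereas for $n\equiv 2\pmod 4$ one is forced to take $p$ \emph{inert} in $K$; in either case such $h$ exist for a density-one set of primes $p$ of the appropriate type, by standard results on representing integers by rank-$n$ positive definite hermitian forms. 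For such a corank-one $h$, the functional equation of $\tilde F_p$ used in the proof of Proposition~\ref{integers1}, together with the fact that its top coefficient is $1$, pins down $\tilde F_p(h,X)=X+c_p+\chi_K(p)X^{-1}$ with $c_p\in\bfZ$, and $c_p=0$ for the simplest choice of $h$; fix such an $h$. Since $\alpha_{\phi,p}'\beta_{\phi,p}'=\chi_K(p)$ and the power of $p$ in $|\gamma(h)|^x$ cancels the normalising power of $p$ in $\alpha_{\phi,p}'$, one computes
\begin{equation*}
e(-i\tr h)c_{I_\phi}(h,p_b)=\alpha_{\phi,p}+\beta_{\phi,p}=a_p(\phi),
\end{equation*}
whose reduction modulo $\ell$ is $\tr\ov{\rho}_\phi(\Frob_p)$ (valid since $p\nmid\ell N_\phi$, the eigenvalues of $\ov{\rho}_\phi(\Frob_p)$ being $\ov{\alpha}_{\phi,p},\ov{\beta}_{\phi,p}$).

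\emph{Finishing by Chebotarev.} Suppose for contradiction that $e(-i\tr h)c_{I_\phi}(h,p_b)\equiv 0\pmod\ell$ for every such $h$. Then $\tr\ov{\rho}_\phi(\Frob_p)\equiv 0\pmod\ell$ for a density-one set of primes $p$ of the chosen splitting type, and $g\mapsto\tr\ov{\rho}_\phi(g)$ is locally constant on $G_\bfQ$. If $n\not\equiv 2\pmod 4$, these $\Frob_p$ are the Frobenii of degree-one primes of $K$ and hence are dense in $G_K$, so $\tr\ov{\rho}_\phi\equiv 0$ on $G_K$; evaluating at the identity gives $2\equiv 0\pmod\ell$, contradicting $\ell\nmid 2$. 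If $n\equiv 2\pmod 4$, these $\Frob_p$ lie in and are dense in the non-trivial coset $G_\bfQ\setminus G_K$, so $\tr\ov{\rho}_\phi$ vanishes identically there; since it vanishes trivially against $1-\chi_K$ on $G_K$, we get $\chi_K(g)\tr\ov{\rho}_\phi(g)=\tr\ov{\rho}_\phi(g)$ for all $g\in G_\bfQ$, i.e.\ $\tr(\ov{\rho}_\phi\otimes\chi_K)=\tr\ov{\rho}_\phi$, hence $\ov{\rho}_\phi\cong\ov{\rho}_\phi\otimes\chi_K$ by semisimplicity and $\ell\ne 2$. If $\ov{\rho}_\phi$ is irreducible this forces $\ov{\rho}_\phi\cong\Ind_{G_K}^{G_\bfQ}\ov{\psi}$ for some character $\ov{\psi}$ of $G_K$, so that $\ov{\rho}_\phi|_{G_K}\cong\ov{\psi}\oplus\ov{\psi}^{\sigma}$ has abelian image; if $\ov{\rho}_\phi$ is reducible its restriction to $G_K$ is abelian a fortiori. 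In either case this contradicts the hypothesis, so some admissible $h$ yields an $\ell$-adic unit Fourier coefficient, as required.

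\emph{Expected main difficulty.} The density argument above is routine; the real work is the middle step, namely extracting from Ikeda's explicit description of local Siegel series \cite{Ikeda08} both (i) the exact shape $\tilde F_p(h,X)=X+\chi_K(p)X^{-1}$ for the simplest corank-one $h$, and (ii) the dichotomy that the primes $p=|\gamma(h)|$ realised in this way are split in $K$ when $n\not\equiv 2\pmod 4$ and inert when $n\equiv 2\pmod 4$, together with the existence of such $h$ over a positive-density family of primes. These rest on Ikeda's computations with hermitian Gauss sums and on the arithmetic of the relevant hermitian lattices.
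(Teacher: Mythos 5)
Your strategy for the main case $n\equiv 2\pmod 4$ is essentially the paper's, but there are two concrete gaps. First, the case $n\not\equiv 2\pmod 4$: you propose to realise $|\gamma(h)|$ as a \emph{split} prime $p$ and then claim the functional equation "pins down" $\tilde F_p(h,X)=X+c_p+\chi_K(p)X^{-1}$ with $c_p=0$. That is false in the split case: there $\underline{\chi}_p(\gamma(h))=+1$, the functional equation reads $\tilde F_p(h,1/X)=\tilde F_p(h,X)$, and this is satisfied by $X+c_p+X^{-1}$ for \emph{every} $c_p$, so the middle coefficient is not determined and your identification of the Fourier coefficient with $a_p(\phi)$ is unjustified. (It is only in the inert case, where $\underline{\chi}_p=-1$ and the functional equation is anti-symmetric, that $c_p$ is forced to vanish.) Moreover the whole detour through Chebotarev is unnecessary here: Ikeda's Lemmas 11.1 and 11.2 produce, for $n\not\equiv 2\pmod 4$, a matrix $h$ with $\gamma(h)=1$, so the product over $p\mid\gamma(h)$ is empty and $e(-i\tr h)c_{I_\phi}(h,p_b)=1$ outright. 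This is what the paper does, and it is the correct route; your proposed "split-prime" dichotomy for $n\not\equiv 2\pmod 4$ is a guess that does not match what Ikeda's lemmas actually provide.

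Second, in the case $n\equiv 2\pmod 4$ you assert that corank-one $h$ with $|\gamma(h)|=p$ exist "for a density-one set of primes $p$ of the appropriate type, by standard results on representing integers by hermitian forms." This is precisely the non-routine input, and what is actually available (Ikeda's Lemma 11.4) only produces $h$ with $\gamma(h)=-p$ for odd primes $p\equiv -1\pmod{D_K}$ — a proper subset of the inert primes when $D_K$ is composite. Consequently your final Chebotarev step, which needs the relevant Frobenii to be dense in the nontrivial coset of $\Gal(L/K)$ in $\Gal(L/\bfQ)$ (with $L$ the splitting field of $\ov{\rho}_\phi$), does not follow immediately: one must first show that conjugates of $\Frob_p$ for $p\equiv -1\pmod{D_K}$ already exhaust that coset. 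The paper does this via the auxiliary fact $L\cap\bfQ(\zeta_{D_K})=K$ (proved using the shape of $\ov{\rho}_\phi$ on inertia at primes dividing $D_K$) together with a coset argument in $\Gal(L\bfQ(\zeta_{D_K})/\bfQ)$. Your proposal omits this step entirely. The endgame (trace zero on the nontrivial coset implies $\ov{\rho}_\phi\cong\ov{\rho}_\phi\otimes\chi_K$, hence dihedral, hence $\ov{\rho}_\phi(G_K)$ abelian) is a valid and slightly cleaner alternative to the paper's matrix computation with $\ov{\rho}_\phi(c)=\diag(1,-1)$, but it only becomes available once the two gaps above are filled.
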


\begin{rem} \label{Ribetcondition} A connection between a modular eigenform $\phi$  arising from a Hecke character of an imaginary quadratic field $K'$ and the condition that its $\ell$-adic (so, in particular characteristic zero) Galois representation $\rho_{\phi}$ has abelian image when restricted to $G_{K'}$ was proved by Ribet  \cite{RibetModFuncOneVar77}. This combined with Theorem \ref{nonzero Ikeda} above yields a connection between non-vanishing of the Ikeda lift $I_{\phi}$ and the condition that  $\rho_{\phi}(G_K)$ be non-abelian.  Our result in essence provides an analogous connection on a mod $\ell$ level. \end{rem}

\begin{proof} [Proof of Theorem \ref{not cong to zero}]
We pick an admissible base $\mB$.  If $n \not \equiv 2$ (mod 4), then the  assertion follows from \cite[Lemmas 11.1 and 11.2]{Ikeda08} because they ensure that there exists $h$ for which $\gamma(h)=1$ and thus we get $e(-i \tr h) c_{I_{\phi}}(h, p_b)=1$ for all $b \in \mB$. Hence for the rest of the proof we assume $n\equiv 2$ (mod 4). We will prove Theorem \ref{not cong to zero} by a sequence of lemmas.

\begin{lemma} \label{inert1} Suppose $p$ is an odd prime such that $p \equiv -1 \pmod{D_K}$. Then $p$ is inert in $K$.
\end{lemma}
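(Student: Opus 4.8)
The plan is to deduce this from the standard description of how an odd, unramified prime splits in an imaginary quadratic field, together with the fact that the associated quadratic character is odd. First I would observe that $p$ is unramified in $K$: since $p \equiv -1 \pmod{D_K}$ we have $D_K \mid p+1$, so a common prime divisor of $p$ and $D_K$ would divide $1$; hence $p \nmid D_K = |\disc K|$. As $p$ is also odd, $p$ is inert in $K$ precisely when $\chi_K(p) = -1$, where $\chi_K = \leg{-D_K}{\cdot}$ is the quadratic Dirichlet character attached to $K/\bfQ$, a primitive character of conductor $D_K$.

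Then I would invoke that $\chi_K$ is odd, i.e.\ $\chi_K(-1) = -1$; this is just the assertion that $\disc K = -D_K < 0$, that is, that $K$ is imaginary. Since $\chi_K$ has modulus $D_K$ and $p \equiv -1 \pmod{D_K}$, we get $\chi_K(p) = \chi_K(-1) = -1$, and therefore $p$ is inert in $K$. That is the whole argument.

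The only step carrying any content is the input that $\chi_K(-1) = -1$; everything else is elementary bookkeeping, so I do not expect a real obstacle. If one wanted to bypass quoting properties of $\chi_K$ altogether, one could instead expand $\leg{-D_K}{p}$ via quadratic reciprocity in terms of the prime factorization of $D_K$ and verify it equals $-1$ directly from $p \equiv -1 \pmod{D_K}$, but the character formulation handles all $D_K$ uniformly and is cleaner.
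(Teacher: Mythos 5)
Your proof is correct and is essentially the paper's argument: both reduce the claim to $\chi_K(p)=\chi_K(-1)=-1$, the paper phrasing the oddness of $\chi_K$ via the embedding $K\subset\bfQ(\zeta_{D_K})$ and the action of complex conjugation, while you phrase it directly as $\disc K<0$. No gap.
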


\begin{proof} Note that $K \subset \bfQ(\zeta_{D_K})$ and that the composite of the canonical homomorphisms $(\bfZ/D_K \bfZ)^{\times} \xrightarrow{\sim} \Gal(\bfQ(\zeta_{D_K})/\bfQ) \twoheadrightarrow \Gal(K/\bfQ) \xrightarrow{\sim} \{\pm 1\} \subset \bfC^{\times}$ equals $\chi_K$. The Lemma follows immediately from this fact.
\end{proof}

\begin{lemma} \label{equal Fc}  For every odd prime $p$ with $p \equiv -1$ \textup{(mod $D_K$)} there exists a matrix $h_{p}$ such that  $e(-i \tr h_{p})c_{I_{\phi}}(h_{p}, p_b)=a_{\phi}(p)$ for every $b \in \mB$.  Here $a_{\phi}(p) = \alpha_{\phi,p} + \beta_{\phi,p}$ denotes the $p$th Fourier coefficient of $\phi$.\end{lemma}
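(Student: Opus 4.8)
The plan is to exhibit a single positive-definite hermitian matrix $h_p$ lying in the lattice that supports the Fourier expansion of $I_\phi$, with $\gamma(h_p) = -p$, and then to read the coefficient off from formula (\ref{Fc formula}). Since $n \equiv 2 \pmod 4$ we are in the case $n = 2m$ with $m$ odd, so in (\ref{Fc formula}) the exponent is $x = k$, $\gamma(h) = (-D_K)^{m}\det h$, and $\alpha'_{\phi,p} = p^{-k}\alpha_{\phi,p}$, $\beta'_{\phi,p} = p^{-k}\beta_{\phi,p}$.

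First I would extract the local consequence of the hypothesis. By Lemma \ref{inert1}, $p$ is inert in $K$; in particular $p \nmid D_K$ (otherwise $D_K \mid p+1$ together with $p \mid D_K$ would force $D_K = 1$), so $\alpha_{\phi,p}$ is a genuine Satake parameter. From $\alpha_{\phi,p}\beta_{\phi,p} = \chi_K(p)p^{2k}$ and $\chi_K(p) = -1$ for $p$ inert, we get $\alpha'_{\phi,p}\beta'_{\phi,p} = -1$, i.e. $\beta'_{\phi,p} = -(\alpha'_{\phi,p})^{-1}$. This sign is exactly what makes the final computation yield $a_\phi(p)$ rather than $\alpha_{\phi,p} - \beta_{\phi,p}$, and it is the only place where the congruence $p \equiv -1 \pmod{D_K}$ is used.

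Next I would produce $h_p$. We need a positive-definite hermitian form over $K$ of rank $n$ with $\det h_p = p/D_K^{m}$ (so that $\gamma(h_p) = (-D_K)^{m}\det h_p = -p$, using $m$ odd), unimodular — in the $\fd_{K/\bfQ}^{-1}$-twisted sense appropriate to Shimura's Fourier expansions — at every finite prime $q \neq p$, and of the unique ``almost unimodular'' type with $\val_p(\det) = 1$ at $p$. Existence follows from the classification of positive-definite hermitian forms over $K$ (a Hasse-principle argument): the discriminant class of $p/D_K^{m}$ in $\bfQ^\times/\Nm_{K/\bfQ}(K^\times)$ is the nontrivial one, because $p$ is inert, and every such class is realized in each rank $\geq 1$; the genus at each $q$ (including the primes dividing $2D_K$, working with the twisted normalization) is then the one dictated by this determinant. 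This is the exact analogue, for $\gamma = p$, of the $\gamma = 1$ matrices constructed in \cite[Lemmas 11.1, 11.2]{Ikeda08}. For such an $h_p$ the only prime dividing $\gamma(h_p)$ is $p$, so (\ref{Fc formula}) collapses to
\[
e(-i\tr h_p)\, c_{I_\phi}(h_p, p_b) = |\gamma(h_p)|^{k}\,\tilde F_p(h_p, \alpha'_{\phi,p}) = p^{k}\,\tilde F_p(h_p, \alpha'_{\phi,p}),
\]
the right-hand side being independent of $b$ (admissibility of $\mB$ is what makes the formula applicable for each $p_b$).

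Finally I would compute the remaining local factor. Since $p$ is inert and $h_p$ is of the almost-unimodular hermitian type with $\val_p(\det h_p) = 1$, Ikeda's explicit evaluation of the local hermitian Siegel series together with its functional equation \cite[Lemma 2.2]{Ikeda08} gives $\tilde F_p(h_p, X) = X - X^{-1}$; this Laurent polynomial is invariant under $X \mapsto -X^{-1}$, consistently with $\alpha'_{\phi,p}\beta'_{\phi,p} = -1$ and with the independence of $\tilde F_p(h_p, \alpha'_{\phi,p})$ from the choice of Satake parameter noted in the proof of Lemma \ref{integers1}. Substituting and using $\alpha'_{\phi,p} = p^{-k}\alpha_{\phi,p}$ and $p^{2k}\alpha_{\phi,p}^{-1} = \chi_K(p)^{-1}\beta_{\phi,p} = -\beta_{\phi,p}$ gives
\[
e(-i\tr h_p)\, c_{I_\phi}(h_p, p_b) = p^{k}\bigl(\alpha'_{\phi,p} - (\alpha'_{\phi,p})^{-1}\bigr) = \alpha_{\phi,p} - p^{2k}\alpha_{\phi,p}^{-1} = \alpha_{\phi,p} + \beta_{\phi,p} = a_\phi(p)
\]
for every $b \in \mB$, as claimed. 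I expect the main obstacle to be the middle step: checking that a hermitian matrix with exactly the prescribed local invariants sits in the support lattice of $I_\phi$ (the delicate points being the primes dividing $2D_K$, where one must handle the $\fd_{K/\bfQ}^{-1}$-twisted unimodularity), together with extracting the precise value $\tilde F_p(h_p, X) = X - X^{-1}$ from Ikeda's local Siegel-series formulas in the normalization used here; both are in essence contained in \cite{Ikeda08} but need careful bookkeeping.
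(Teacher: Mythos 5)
Your proposal is correct and follows essentially the same route as the paper: produce $h_p$ with $\gamma(h_p)=-p$, reduce via formula (\ref{Fc formula}) to the single local factor $p^k\tilde F_p(h_p,\alpha'_{\phi,p})$, pin down $\tilde F_p(h_p,X)=X-X^{-1}$ using the degree bounds and the functional equation of \cite[Lemma 2.2]{Ikeda08}, and conclude with $\alpha_{\phi,p}\beta_{\phi,p}=-p^{2k}$. The only difference is that where you sketch a Hasse-principle construction of $h_p$, the paper simply invokes \cite[Lemma 11.4]{Ikeda08} and its proof, which already supplies exactly such a matrix.
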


\begin{proof} By \cite[Lemma 11.4]{Ikeda08} and its proof for every such $p$ there exists a matrix $h_p$ such that $\gamma(h_p)=-p$, where $\gamma$ is as in (\ref{Fc formula}). Hence by that same formula, we get that $$e(-i \tr h_{p})c_{I_{\phi}}(h_p, p_b) = |\gamma(h_p)|^k\tilde{F}_p(h_p, \alpha'_{\phi, p}).$$
By \cite[p. 1112]{Ikeda08} we know that $\tilde{F}_p(h_p,X)$ is a Laurent polynomial whose highest degree term is $X$ and whose lowest degree term is $\underline{\chi}_p(\gamma(h_p))^{n-1}X^{-1}$, i.e., $\tilde{F}_p(h_p,X) = X + a +\underline{\chi}_p(\gamma(h_p)) 1/X$ for some integer $a$. Here  $\underline{\chi}_p(a):= \left(\frac{-D_K,a}{\bfQ_p}\right)$ (cf. \cite[p. 1110]{Ikeda08}), where the latter denotes the Hilbert symbol. Using the assumptions on $p$, we get $\underline{\chi}_p(\gamma(h_p))= -1$.
 By \cite[Lemma 2.2]{Ikeda08} we have that the functional equation for $\tilde{F_p}$ reads $$\tilde{F}_p(h_p; 1/X) = \underline{\chi}_p(\gamma(h_p))\tilde{F}_p(h_p;X).$$
So, we have $$
1/X+a-X =\tilde{F}_p(h_p; 1/X)=-\tilde{F}_p(h_p,X)= (-1)(X+a-1/X).$$ From this it follows that $\tilde{F}_p(h_p,X)=X-1/X$.

Thus $$e(-i \tr h_{p})c_{I_{\phi}}(h_p, p_b)=|\gamma(h_p)|^k\tilde{F}_p(h_p, p^{-k}\alpha_{\phi,q}) = p^k\left(\alpha_{\phi,p}p^{-k}-\frac{1}{\alpha_{\phi,p}p^{-k}}\right).$$ Since $\alpha_{\phi,p}\beta_{\phi,p}=p^{2k}\chi_K(p) = -p^{2k}$, we get $$e(-i \tr h_{p})c_{I_{\phi}}(h_p, p_b)= \alpha_{\phi,p}+\beta_{\phi,p} = a_p(\phi).$$
\end{proof}

As already noted  one has $K \subset \bfQ(\zeta_{D_K})$. Write $L$ for the splitting field of $\ov{\rho}_{\phi}$.

\begin{lemma} \label{intersection} One has $L \cap \bfQ(\zeta_{D_K}) = K$.
\end{lemma}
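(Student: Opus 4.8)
The plan is to show that $L\cap\bfQ(\zeta_{D_K})$ corresponds to exactly the characters $1$ and $\chi_K$. Concretely, $L\cap\bfQ(\zeta_{D_K})$ is abelian over $\bfQ$ and unramified outside $D_K$, so it is the fixed field of the group of Dirichlet characters modulo $D_K$ which factor through $\Gal(L/\bfQ)=\overline{\rho}_{\phi}(G_{\bfQ})$; since $K\subseteq\bfQ(\zeta_{D_K})$ is classical, it suffices to prove that this group of characters is $\langle\chi_K\rangle$. I would extract this from three facts. First, $\phi$ has nebentypus $\chi_K$, so $\det\overline{\rho}_{\phi}=\overline{\chi}_{\ell}^{\,2k}\chi_K$ with $\overline{\chi}_{\ell}$ the mod $\ell$ cyclotomic character (of $\ell$-power conductor, hence coprime to $D_K$). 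Second, since $\phi$ has level exactly $D_K$, for each prime $p\mid D_K$ the representation $\overline{\rho}_{\phi}$ is tamely, principal-series ramified at $p$ with $\det\overline{\rho}_{\phi}|_{I_p}$ equal to the order-$2$ component $\chi_{K,p}$ of $\chi_K$ (nontrivial mod $\ell$ since $\ell\neq 2$), so that $\overline{\rho}_{\phi}(I_p)$ is cyclic of order $2$. Third, the hypothesis that $\overline{\rho}_{\phi}(G_K)$ is non-abelian forces $\overline{\rho}_{\phi}|_{G_K}$ to be irreducible: $\overline{\rho}_{\phi}$ is semisimple and $[G_{\bfQ}:G_K]=2$ is prime to $\ell$, so $\overline{\rho}_{\phi}|_{G_K}$ is semisimple, and a two-dimensional semisimple representation with non-abelian image is irreducible; hence $\overline{\rho}_{\phi}$ is irreducible and is not induced from $K$.

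From the second fact, any subfield of $L$ that is abelian over $\bfQ$ and unramified at $\ell$ has inertia of order at most $2$ at every $p\mid D_K$; being generated by its inertia subgroups, it is therefore a compositum of quadratic fields, each ramified only at a single prime dividing $D_K$. So the characters we must understand are quadratic and of conductor dividing $D_K$; the group they form contains $\chi_K$ if and only if $K\subseteq L$, and the assertion of the lemma is that this group is precisely $\langle\chi_K\rangle$. Equivalently, no ``new'' quadratic character — one cutting out a field ramified at a proper divisor of $D_K$, or more generally one not lying in $\langle\chi_K\rangle$ — factors through $\overline{\rho}_{\phi}$.

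This last point is, I expect, the main obstacle, and I would settle it via the Dickson classification of the image $\overline{\rho}_{\phi}(G_{\bfQ})\subseteq\GL_2(\overline{\bfF}_{\ell})$ (which, by the third fact, is irreducible and not dihedral induced from $K$). If the image is ``large'' (containing $\SL_2$ of a subfield up to twist), then its abelianization is generated by $\det\overline{\rho}_{\phi}$, and one checks using the first fact and $\ell\nmid D_K$ that the only characters of $\langle\det\overline{\rho}_{\phi}\rangle$ of conductor dividing $D_K$ are $1$ and $\chi_K$. If the image is dihedral, $\overline{\rho}_{\phi}\cong\Ind_{K'}^{\bfQ}\psi$ with $K'\neq K$, one compares $\det\overline{\rho}_{\phi}=\overline{\chi}_{\ell}^{\,2k}\chi_K$ with the determinant of the induced representation and uses that the conductor of $\overline{\rho}_{\phi}$ away from $\ell$ is exactly $D_K$ and that $\overline{\rho}_{\phi}(G_K)$ is non-abelian to rule out the extra characters; the exceptional projective images $A_4,S_4,A_5$ are dealt with the same way, by computing the abelianization and matching conductors. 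The real work lies in these non-``large image'' cases, where the non-abelian hypothesis on $\overline{\rho}_{\phi}(G_K)$, the inertial types of $\phi$ at the primes dividing $D_K$, and the precise shape $\overline{\chi}_{\ell}^{\,2k}\chi_K$ of the determinant must all be used together.
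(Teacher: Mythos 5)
Your proposal takes a genuinely different — and much heavier — route than the paper, and it is not complete. The paper's argument is a short ramification computation that never classifies the global image: from $\overline{\rho}_{\phi}|_{I_{\fp}}\cong 1\oplus\chi_K'$ at each $p\mid D_K$ (your ``second fact'') it follows that the image of inertia at such $p$ in $\Gal(L/\bfQ)$ has order two and maps isomorphically onto $\Gal(K/\bfQ)$, hence meets $\Gal(L/K)$ trivially; so $L/K$ is unramified at every prime above $D_K$, and the paper concludes by arguing that $\Gal\bigl((L\cap\bfQ(\zeta_{D_K}))/K\bigr)$ is generated by these (trivial) inertia images. Your reformulation in terms of Dirichlet characters modulo $D_K$ factoring through $\Gal(L/\bfQ)$ is correct, your three ``facts'' are all fine (including that the non-abelian hypothesis gives irreducibility of $\overline{\rho}_{\phi}|_{G_K}$ — though note the lemma as stated does not carry that hypothesis; you are importing it from the surrounding theorem), and the large-image case is plausible. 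But the classification strategy pushes all of the difficulty into the dihedral and exceptional cases, which you do not carry out: you say yourself that ``the real work lies in these non-large-image cases.'' As written, the argument is a plan, not a proof.

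Moreover, the deferred work is not routine. In the dihedral case $\overline{\rho}_{\phi}\cong\Ind_{K'}^{\bfQ}\psi$ with $K'$ quadratic and ramified only at primes dividing $D_K\ell$; if $K'$ is unramified at $\ell$ then $K'\subseteq\bfQ(\zeta_{D_K})$, and if additionally $K'\neq K$ — which your non-abelian hypothesis does \emph{not} exclude, since it only rules out induction from $K$ itself — then $\chi_{K'}$ is precisely an ``extra'' quadratic character of conductor dividing $D_K$ that factors through $\Gal(L/\bfQ)$. When $D_K$ is composite, a field such as $K'=\bfQ(\sqrt{p^{*}})$ for $p$ a proper prime divisor of $D_K$ is not visibly excluded by the determinant identity, the inertial types at $p\mid D_K$, or the conductor: $K'K/K$ is unramified (the genus-field phenomenon), so all your local constraints can be met. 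You would need a genuine additional argument here, and the same subtlety is exactly where the paper's own step ``$\Gal(K'/K)$ is generated by images of inertia'' requires justification. A parallel issue arises if $\overline{\rho}_{\phi}|_{G_K}$ is reducible (excluded for you only by the extra hypothesis). So the proposal establishes the lemma only in the large-image case; the remaining cases are asserted rather than proved, and the tools you list do not obviously close them.
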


\begin{proof} For simplicity set $D=D_K$. For any $p \mid D$ and any prime $\fp$ of $L$ lying over $p$, write $I_{\fp}$ for the inertia subgroup of $\fp$. One has that \be\label{when res} \ov{\rho}_{\phi}|_{I_{\fp}} \cong 1 \oplus \chi'_K,\ee where $\chi'_K: \Gal(\bfQ(\zeta_{D})/\bfQ) \to \ov{\bfZ}_{\ell}^{\times}$ is the Galois character obtained from $\chi_K: (\bfZ/D\bfZ)^{\times}\to \bfC^{\times}$ via the canonical identification $\Gal(\bfQ(\zeta_{D})/\bfQ) \cong (\bfZ/D\bfZ)^{\times}$ (\cite[Theorem 3.1(e)]{DarmonDiamondTaylor95} or \cite[Theorem 3.26(3)]{HidaGaloisCohomology}).

Set $K'=L\cap \bfQ(\zeta_D)$. One has the following tower of fields $\bfQ \subset K \subset K' \subset L$ and
\eqref{when res} implies that the extension $L/K'/K$ is unramified outside the primes above  $\ell$. On the other hand, since $K' \subset \bfQ(\zeta_D)$, the group $\Gal(K'/K)$ is generated
by images  of the inertia subgroups $I_{\fp}$ for $\fp$ lying over $p$ with $p \mid D$. We conclude that $K' = K$.
\end{proof}

 Let $\varpi$ be a uniformizer of a finite extension $E$ of $\bfQ_{\ell}$ in which all the Fourier coefficients of $\phi$ lie. Suppose that $e(-i \tr h)c_{I_{\phi}}(h, p_b)\equiv 0 \pmod{\varpi}$ for all matrices $h$ (this makes sense by Corollary \ref{one number field}). By Lemma \ref{equal Fc} this implies that $a_{\phi}(p) \equiv 0 \pmod{\varpi}$ for all odd primes $p \equiv -1 \pmod{D_K}$. Since for every prime $p \nmid \ell D_K$ we have $\tr \rho_{\phi} (\Frob_p) = a_p(\phi)$, we see that we must have $\tr \ov{\rho}_{\phi}(\Frob_p) \equiv 0 \pmod{\varpi}$ for all odd primes $p \nmid \ell D_K$ with $p \equiv -1 \pmod{D_K}$.

\begin{lemma}\label{Lemma 1} One has $\Gal(L/\bfQ) = G \sqcup cG$, where $$G:=\{\textup{conjugates of }\Frob_p \mid p \equiv 1 \pmod{D_{K}}\}=\Gal(L/K)$$ and $cG=\{\textup{conjugates of }\Frob_p \mid p \neq 2, p \equiv -1 \pmod{D_{K}}\}.$ Here $c$ is complex conjugation. \end{lemma}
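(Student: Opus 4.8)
The plan is to combine Lemma~\ref{intersection} (which says $L$ and $\bfQ(\zeta_{D_K})$ are linearly disjoint over $K$) with the Chebotarev density theorem applied to the compositum $M:=L\bfQ(\zeta_{D_K})$. First I would record the structural facts: since $K/\bfQ$ is Galois of degree $2$ and $K\subseteq L$, the subgroup $\Gal(L/K)$ is normal of index $2$ in $\Gal(L/\bfQ)$; moreover $c|_K$ is the non-trivial element of $\Gal(K/\bfQ)$ because $K$ is imaginary, so $c\notin\Gal(L/K)$ and hence $\Gal(L/\bfQ)=\Gal(L/K)\sqcup c\,\Gal(L/K)$. It then suffices to show $G=\Gal(L/K)$ and $cG=c\,\Gal(L/K)$.

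The inclusions ``$\subseteq$'' are immediate. If $p\nmid \ell D_K$ and $p\equiv 1\pmod{D_K}$, then $\Frob_p$ is trivial on $\bfQ(\zeta_{D_K})$, hence on $K$, so $\Frob_p|_L\in\Gal(L/K)$, and the whole conjugacy class lies there since $\Gal(L/K)$ is normal. If instead $p$ is odd and $p\equiv -1\pmod{D_K}$, then by Lemma~\ref{inert1} $p$ is inert in $K$, so $\Frob_p|_K=c|_K$, and $\Frob_p|_L$ together with its conjugates lies in $c\,\Gal(L/K)$.

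For the reverse inclusions I would use the fibre-product description $\Gal(M/\bfQ)\cong\Gal(L/\bfQ)\times_{\Gal(K/\bfQ)}\Gal(\bfQ(\zeta_{D_K})/\bfQ)$ provided by Lemma~\ref{intersection}. Given $g\in\Gal(L/K)$, the pair $(g,\mathrm{id})$ is a legitimate element of $\Gal(M/\bfQ)$ since both components restrict trivially to $K$; Chebotarev then yields a prime $p\nmid\ell D_K$ whose Frobenius class in $\Gal(M/\bfQ)$ is that of $(g,\mathrm{id})$, so that $\Frob_p|_{\bfQ(\zeta_{D_K})}=\mathrm{id}$ (the group being abelian) forces $p\equiv 1\pmod{D_K}$, while $\Frob_p|_L$ is a conjugate of $g$. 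Given $h\in c\,\Gal(L/K)$, I would instead take the pair $(h,\sigma_{-1})$, where $\sigma_{-1}\in\Gal(\bfQ(\zeta_{D_K})/\bfQ)\cong(\bfZ/D_K\bfZ)^{\times}$ corresponds to $-1$; the two components agree on $K$ because $\sigma_{-1}|_K=c|_K$ (equivalently $\chi_K(-1)=-1$, which holds as $K$ is imaginary and also follows from the character description in the proof of Lemma~\ref{inert1}), so $(h,\sigma_{-1})$ lies in $\Gal(M/\bfQ)$ and Chebotarev produces a prime $p$ with $p\equiv -1\pmod{D_K}$ and $\Frob_p|_L$ conjugate to $h$; since Chebotarev supplies infinitely many such $p$ we may discard the single possibility $p=2$.

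The only genuinely delicate ingredient is Lemma~\ref{intersection}: it is precisely what allows the restriction to $L$ and the restriction to $\bfQ(\zeta_{D_K})$ to be prescribed independently, subject only to compatibility on the common subfield $K$. Once that linear disjointness is in hand, the remainder is bookkeeping with Chebotarev and the normality of $\Gal(L/K)$, and I would take care to state the fibre-product isomorphism and the identity $\sigma_{-1}|_K=c|_K$ cleanly, as these are what make the coset decomposition ``visible'' to the Chebotarev argument.
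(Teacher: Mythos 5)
Your proof is correct and follows essentially the same route as the paper: both rest on Lemma \ref{intersection}, pass to the compositum $L\bfQ(\zeta_{D_K})$, and apply Chebotarev there, with the residue of $p$ modulo $D_K$ read off from the restriction of Frobenius to $\bfQ(\zeta_{D_K})$. Your fibre-product formulation is just a cleaner packaging of the paper's argument with the quotient map $\varphi$ and the cosets of $\Gal(L\bfQ(\zeta_{D_K})/\bfQ(\zeta_{D_K}))$, and you are somewhat more explicit about the two easy inclusions (via Lemma \ref{inert1}) which the paper leaves implicit.
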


\begin{proof} Consider the following diagram of fields $$\xymatrix{& L \bfQ(\zeta_{D_{K}}) \ar@{-}[dl]\ar@{-}[dr] \\ \bfQ(\zeta_{D_{K}})\ar@{-}[dr] && L\ar@{-}[dl]\\ & \bfQ}$$ By the Tchebotarev Density Theorem we know that  the conjugates of $\Frob_p$ (as $p$ runs over all primes  not dividing $\ell D_{K}$) generate $\Gal(L\bfQ(\zeta_{D_{K}})/\bfQ)$. Furthermore for any $\tau \in \Gal(L\bfQ(\zeta_{D_K})/\bfQ)$ the image of $\tau \Frob_p \tau^{-1}$ in $\Gal(\bfQ(\zeta_{D_{K}})/\bfQ)\cong(\bfZ/D_{K}\bfZ)^{\times}$ equals $n$ if and only if $p \equiv n \pmod{D_{K}}$. Write $\varphi: \Gal(L \bfQ(\zeta_{D_{K}})/\bfQ) \twoheadrightarrow \Gal(L/\bfQ)$ for the canonical quotient map. Since $L\cap\bfQ(\zeta_{D_{K}})=K$ by Lemma \ref{intersection}, 
the image $\varphi(\Gal(L\bfQ(\zeta_{D_{K}})/\bfQ(\zeta_{D_{K}})))$ is an index two subgroup $G$ of $\Gal(L/\bfQ)$ and in fact this subgroup must be $\Gal(L/K)$. Since $\Gal(L\bfQ(\zeta_{D_{K}})/\bfQ(\zeta_{D_{K}}))$ is generated by the set $$\{\textup{conjugates of }\Frob_p \mid p\equiv 1 \pmod{D_{K}}\},$$ we get the same for its image in $\Gal(L/\bfQ)$.

Now note that $c$ itself is a conjugate of $\Frob_p$ with $p \equiv -1 \pmod{D_{K}}$.
 Consider $c \Gal(L\bfQ(\zeta_{D_{K}})/\bfQ(\zeta_{D_{K}})) \subset \Gal(L\bfQ(\zeta_{D_{K}})/\bfQ)$. On the one hand we have that $\varphi(c \Gal(L\bfQ(\zeta_{D_{K}})/\bfQ(\zeta_{D_{K}}))) =c G$ and on the other hand we must have that the image of $c \Gal(L\bfQ(\zeta_{D_{K}})/\bfQ(\zeta_{D_{K}}))$ in $\Gal(\bfQ(\zeta_{D_{K}})/\bfQ)$ is $\{-1\}$. Thus every element of $c \Gal(L\bfQ(\zeta_{D_{K}})/\bfQ(\zeta_{D_{K}}))$ (and hence also of $c G$) is a conjugate of $\Frob_p$ with $p \equiv -1 \pmod{D_{K}}$. Finally, by the Tchebotarev Density Theorem we can omit $\Frob_2$ from this set. \end{proof}

We will now finish the proof of Theorem \ref{not cong to zero}.
Let $\sigma \in G=\Gal(L/K)$. Then $c \sigma =\tau \Frob_p \tau^{-1}$ for some $\tau \in \Gal(L/\bfQ)$ and some $p \equiv -1$ mod $D_K$. 
Hence $$\tr \ov{\rho}_{\phi}(c \sigma) = \tr\ov{\rho}_{\phi}(\tau \Frob_p \tau^{-1}) = \tr \ov{\rho}_{\phi}(\Frob_p) = 0.$$ In some basis we have $\ov{\rho}_{\phi}(c)=\bmat 1\\ &-1\emat$ and thus if we write $\ov{\rho}_{\phi}(\sigma)=\bmat a&b \\ c&d \emat$, then we have $\ov{\rho}_{\phi}(c\sigma) = \bmat a&b \\ -c&-d \emat.$ Since $\tr \ov{\rho}_{\phi}(c\sigma)=0$ we get $a=d$. So, we have now proved that with respect to some fixed basis all elements $\sigma\in G$ have the property that $\ov{\rho}_{\phi}(\sigma)$ has its upper-left and its lower-right entries equal. This ensures that $\ov{\rho}_{\phi}(G)$ is abelian (see the last few lines of the proof of \cite[Proposition 8.13]{KlosinAnnInstFourier2009} for details).
 This finishes the proof of the theorem. \end{proof}

\begin{cor} \label{concretematrix} Fix an admissible base $\mB$.  If $n \not\equiv 2 \pmod{4}$, then for every $b \in \mB$  there exists $h \in S^+_{n}(\bfQ)$ with $\gamma(h)=1$ such that $\val_{\varpi}(\det h)=
\val_{\varpi}(e(-i \tr h) c_{I_{\phi}}(h, p_b))=0$. If $n \equiv 2 \pmod{4}$
then for every $b \in \mB$ there exists a prime $p \nmid 2\ell D_K$, inert in $K$ and $h\in S_n(\bfQ)$ with $\gamma(h)=-p$ such that $\val_{\varpi}(\det h)=\val_{\varpi}(e(-i \tr h) c_{I_{\phi}}(h, p_b))=0$.
\end{cor}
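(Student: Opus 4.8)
The plan is to read off the required matrices directly from the ingredients already assembled for Theorem~\ref{not cong to zero}, combined with formula~(\ref{Fc formula}) for the Fourier coefficients of $I_\phi$ and the identity $\gamma(h)=(-D_K)^{\lfloor n/2\rfloor}\det h$. Throughout one uses that $\ell\nmid 2D_K$, so $\val_\varpi(D_K)=0$; and, in the case $n\equiv 2\pmod 4$, one assumes (as in Theorem~\ref{not cong to zero}) that $\ov{\rho}_\phi(G_K)$ is non-abelian, since otherwise $I_\phi$ vanishes and the assertion is empty.

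For $n\not\equiv 2\pmod 4$ I would argue exactly as in the first paragraph of the proof of Theorem~\ref{not cong to zero}: invoking \cite[Lemmas 11.1 and 11.2]{Ikeda08} one produces a positive definite $h\in S^+_n(\bfQ)$ with $\gamma(h)=1$ (the same $h$ serving all $b\in\mB$). For such $h$ the product over $p\mid\gamma(h)$ in (\ref{Fc formula}) is empty and $|\gamma(h)|^x=1$, so $e(-i\tr h)c_{I_\phi}(h,p_b)=1$, of $\varpi$-valuation $0$; and $\det h=(-D_K)^{-\lfloor n/2\rfloor}$, again of $\varpi$-valuation $0$ since $\ell\nmid D_K$.

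For $n\equiv 2\pmod 4$ the key point is that the last part of the proof of Theorem~\ref{not cong to zero} in fact establishes the contrapositive: if $\val_\varpi(a_\phi(p))>0$ for \emph{every} odd prime $p\nmid\ell D_K$ with $p\equiv -1\pmod{D_K}$, then (via Lemma~\ref{Lemma 1} and the trace computation against complex conjugation) $\ov{\rho}_\phi(G_K)$ would be abelian. Hence under our hypothesis there is a prime $p\nmid 2\ell D_K$ with $p\equiv -1\pmod{D_K}$ and $\val_\varpi(a_\phi(p))=0$; such $p$ is inert in $K$ by Lemma~\ref{inert1}. Feeding this $p$ into Lemma~\ref{equal Fc} gives $h=h_p\in S_n(\bfQ)$ with $\gamma(h)=-p$ and $e(-i\tr h)c_{I_\phi}(h,p_b)=a_\phi(p)$ for all $b\in\mB$, so that Fourier coefficient has $\varpi$-valuation $0$; and since $n/2$ is odd, $(-D_K)^{\lfloor n/2\rfloor}<0$, whence $\det h=pD_K^{-n/2}>0$, also of $\varpi$-valuation $0$ (one checks incidentally that $h_p$ is in fact positive definite, as a nonzero Fourier coefficient of a cusp form).

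I do not anticipate a genuine obstacle here: everything needed is contained in Theorem~\ref{not cong to zero} and Lemmas~\ref{inert1}, \ref{Lemma 1} and \ref{equal Fc}. The only step requiring mild care is the passage from the \emph{statement} of Theorem~\ref{not cong to zero} (``some Fourier coefficient of $I_\phi$ is a $\varpi$-adic unit'') to what its proof actually yields (``some $a_\phi(p)$ with $p\equiv -1\pmod{D_K}$ is a $\varpi$-adic unit''), since it is this sharper form that pins down the shape of $h$ and the auxiliary prime $p$; the two valuation claims are then immediate from $\ell\nmid 2D_K p$.
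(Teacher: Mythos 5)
Your proof is correct and follows essentially the same route as the paper: for $n\not\equiv 2\pmod 4$ it invokes Ikeda's Lemmas 11.1 and 11.2 together with formula (\ref{Fc formula}) and $\ell\nmid D_K$, and for $n\equiv 2\pmod 4$ it extracts from the proof of Theorem \ref{not cong to zero} (via Lemmas \ref{inert1}, \ref{Lemma 1}) a prime $p\equiv -1\pmod{D_K}$ with $a_\phi(p)$ a $\varpi$-adic unit and feeds it into Lemma \ref{equal Fc}, which is exactly what the paper's terse ``follows from the proof of Theorem \ref{not cong to zero}'' means. One small inaccuracy in a side remark: an abelian image $\ov{\rho}_\phi(G_K)$ does not imply that $I_\phi$ vanishes (Theorem \ref{nonzero Ikeda} concerns characteristic-zero Hecke characters, not the mod-$\ell$ image); the non-abelian hypothesis is simply inherited implicitly from Theorem \ref{not cong to zero}, as in the paper.
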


\begin{proof} If $n \not\equiv 2 \pmod{4}$, then \cite[Lemmas 11.1 and 11.2]{Ikeda08}  allow us to find $h$ with $\gamma(h)=1$ hence the claim follows from the definition of $\gamma$, formula (\ref{Fc formula}) and the assumption that $\ell \nmid D_K$. If $n \equiv 2 \pmod{4}$, then the existence of $h$ follows from  the proof of Theorem \ref{not cong to zero}.
\end{proof}

\section{Congruence to Ikeda lift} \label{Congruence to Ikeda lift}
We keep the notation and assumptions from section \ref{Arithm}.
 Set $\mJ(K) =\frac{1}{2}\# \OK^{\times}$. Note that $\mJ(K)=1$ when $D_K >12$. It was shown in \cite[Proposition 3.13]{Klosin15} that $\mM_{n,k}(\mK) \cong \mM_{n,k, \nu}(\mK)$ provided that $\mJ(K) \mid \nu$ and $(2n,h_K)=1$. The isomorphism between the two spaces is Hecke-equivariant and is given by a function $\Psi_{\beta}: f \mapsto \beta\otimes f$, where $\beta$ is an everywhere unramified Hecke character of $K$ of infinity type $\left(\frac{z}{|z|}\right)^{-2\nu}$. For details we refer the reader to \cite[p. 811-812]{Klosin15}. 

From now on we assume that $(h_K, 2n)=1$ and $\mJ(K) \mid \nu$ so that $\Psi_{\beta}^{-1}(I_{\phi})\in \mS_{n,2k+2m}(G_n(\hat{\bfZ}))$, and fix $\beta$ as above with $-2\nu = 2k+2m$.
We also fix a rational prime $\ell >2k+2m$ such that $\ell \nmid 2h_KD_K$ and define $\Omega_{\phi}^{\pm}\in\bfC^{\times}$ to be the integral periods associated with $\phi$ cf. e.g \cite{VatsalDuke99}. Let us note that
\begin{equation}\label{eqn:eta}
\eta_{\phi}:=\frac{\langle \phi, \phi \rangle_{\Gamma_{\phi}}}{\Omega_{\phi}^+ \Omega_{\phi}^-}\in \ov{\bfQ}^{\times}_{\ell}.
 \end{equation}
 In fact the ratio is in $\ov{\bfZ}_{\ell}$ when $\phi$ is ordinary at $\ell$ cf. e.g. \cite[Theorem 6.28]{Hida16}). Here $N_{\phi}$ denotes the level of $\phi$.
The aim of this section is to prove the following result.
Set
 $$\mV= \begin{cases} \prod_{i=2}^n\frac{L(i+2k-1, \Sym^2 \phi \otimes \chi_K^{i+1})}{\pi^{2k+2i-1}\Omega_{\phi}^+\Omega_{\phi}^-} & n=2m+1\\ \prod_{i=2}^n \frac{L(i+2k, \Sym^2 \phi \otimes \chi_K^{i})}{\pi^{2k+2i}\Omega_{\phi}^+\Omega_{\phi}^-} & n=2m. \end{cases}$$


By a result of Sturm \cite[p. 220-221]{Sturm}
for $n=2m+1$ (i.e., the weight of $\phi$ equals $2k$) we have
\be \label{St1}
\frac{L(i+2k-1, \Sym^2 \phi \otimes \chi_K^{i+1})}{\pi^{2k+2i-1}\left<\phi,\phi\right>} \in \ov{\bfQ}
\ee
and for $n=2m$ (i.e., the weight of $\phi$ equals $2k+1$) we have
\be \label{St2}
\frac{L(i+2k, \Sym^2 \phi \otimes \chi_K^{i})}{\pi^{2k+2i}\left<\phi,\phi\right>} \in \ov{\bfQ}
\ee
for $2 \leq i \leq 2k-1$ (cf. Remark \ref{power of pi} for the discrepancy in the exponent of $\pi$ between here and in \cite{Sturm}). Indeed, let us note that regardless of the parity of $n$ our points of evaluation are always in the second subset of what is called $S_1$ on page 220 of \cite{Sturm}. The inequalities there translate to exactly the above range for the values of $i$. Hence to apply this result to the $L$-factors appearing in $\mV$ we need to (and will from now on) make the assumption that $n \leq 2k-1$.
Thus in particular $\mV \in \ov{\bfQ}_{\ell}$.


\begin{thm}  \label{Ikedacong}
Assume $n\leq 2k-1$. Let $\ell \nmid 2h_KD_Ki(\phi)$ (for definition of $i(\phi)$ see section \ref{Arithm}) be a rational prime with $\ell > 2k+2m$
and write $\varpi$ for a choice of a uniformizer in some sufficiently large finite extension of $\bfQ_{\ell}$.
Let $\xi$ be a Hecke character of $K$ such that $\val_{\varpi}(\cond \xi) = 0$, $\xi_{\infty}(z) = \left(\frac{z}{|z|}\right)^{-t}$ for some $t \in \bfZ$ with $-2k-2m \leq t < \min\{-6, -4n\}$.
Then \be \begin{split}
\mU:=& \frac{\prod_{i=1}^{n}\pi^{-2n-2k-2m-t+2i-2} L^{D_K} (n+t/2+k+m-i+1, \BC(\phi)\otimes \xi^{-1}\beta)}{(\Omega_{\phi}^+\Omega_{\phi}^-)^n}\\
&\times \prod_{i=2}^{n} \frac{L(i, \chi_K^i)}{\pi^i} \times L_{D_{K}}(2n+t/2, \Psi_{\beta}^{-1}(I_{\phi}), \xi^{-1} ;\st)\end{split}
\ee belongs to $\ov{\bfQ}_{\ell}$.

 Let $\tau \in S_n(\bfQ)$ be as in Corollary \ref{concretematrix} and set $N =T D_{K} h_{K} \Nm_{K/\bfQ}(\cond \xi)$, where $T \in \bfZ$ is a generator of the inverse of the fractional ideal $\{g^* \tau^{-1} g \mid g \in \OK^n\}$ of $\bfQ$.
Assume that $\val_{\varpi}\left(T\#(\mO_{K}/N\mO_{K})^{\times}\right) =0$.
 If $\val_{\varpi}( \mU)=0$ and $b:=\val_{\varpi}(\eta_{\phi}\mV)>0$, then there exists a non-zero $f' \in \mS_{n,2k+2m}(G_n(\hat{\bfZ}))$, orthogonal to $\Psi_{\beta}^{-1}(I_{\phi})$, such that $f'\equiv \Psi_{\beta}^{-1}(I_{\phi}) \pmod{\varpi^b}$.
 \end{thm}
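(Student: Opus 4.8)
The plan is to deduce Theorem~\ref{Ikedacong} from Theorem~\ref{thmmain}, applied to the Hecke eigenform $f:=\Psi_{\beta}^{-1}(I_{\phi})\in\mS_{n,2k+2m}(\mK_{0,n}(\bfZ))$ (so $F=\bfQ$, $d=1$, $\fM=\OF=\bfZ$, and the parallel weight ``$k$'' of Theorem~\ref{thmmain} is $2k+2m$), and then to unwind the quantity $\pi^{n^2}\vol(\mF_{\mK_{0,n}(\bfZ)})^{-1}\,\overline{L^{\rm alg}(2n+t/2,f,\xi;\st)}$ that governs the congruence modulus in Theorem~\ref{thmmain} in terms of $\mU$, $\mV$ and $\eta_{\phi}$; the algebraicity assertion $\mU\in\ov{\bfQ}_{\ell}$ will then drop out of the same identity read in the other direction, using that $L^{\rm alg}$ and $\pi^{n^2}/\vol$ are algebraic (\cite{ShimuraArithmeticity,ShimuraCBMS97}), that $\mV\in\ov{\bfQ}_{\ell}$ by \eqref{St1}--\eqref{St2} and \eqref{eqn:eta}, and that the $L(i,\chi_K^i)/\pi^i$ are classically algebraic.

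First I would check the hypotheses of Theorem~\ref{thmmain}. Since $\Psi_{\beta}$ is Hecke-equivariant (\cite[Prop.~3.13]{Klosin15}), $f$ is a Hecke eigenform; since $(h_K,2n)=1$ and $\Cl_K^-=\Cl_K$ (automatic for $F=\bfQ$), an admissible base $\mB$ of scalar matrices $b$ with $bb^*=1_n$ exists, and as $\beta$ is everywhere unramified $\beta(\det p_b)=1$, so $f$ has the same normalized Fourier coefficients at the $p_b$ as $I_{\phi}$; Corollary~\ref{one number field} then gives $\Oo$-integrality of these, and Corollary~\ref{concretematrix} supplies a positive definite $\tau$ (with $\gamma(\tau)=1$ if $n\not\equiv2\pmod4$, resp.\ $\gamma(\tau)=-p$ for an inert $p\nmid2\ell D_K$ if $n\equiv2\pmod4$) with $\val_{\varpi}(e(-i\tr\tau)c_f(\tau,1_n))=\val_{\varpi}(\det\tau)=0$. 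I take $r=1_n$ (so $r_w=1_n$ for all $w$ and $\val_{\varpi}(\det r)=0$) and $\fN=N\bfZ$; then $\fM\mid\fN$ and, by \eqref{fNt} with $F=\bfQ$ (so $\fd_{F/\bfQ}=(1)$ and $\ft=T\bfZ$), also $\fN_{\ft}\mid\fN$, while $c_f(\tau,1_n)\neq0$ together with Remark~\ref{Relative different} shows $\tau$, $r=1_n$ satisfy \eqref{reldiff}. The infinity type of $\xi$ and the range $-2k-2m\leq t<\min\{-6,-4n\}$ are exactly what Theorem~\ref{thmmain} requires in weight $2k+2m$, and $\ell>2k+2m$, $\ell\nmid2D_Kh_K$ cover the remaining conditions. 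Finally $\ell\nmid2D_Kh_K$ and $\val_{\varpi}(\cond\xi)=0$ force $\ell\nmid N$, so $\#(\OK/\fN\OK)=N^2$ is a $\varpi$-unit; and evaluating the local indices $[\mK_{0,n,p}(\bfZ_p):\mK_{0,n,p}(N\bfZ_p)]$ (here $p\mid N$, hence $p\neq\ell$) expresses $[\mK_{0,n}(\fM):\mK_{0,n}(\fN)]$ as a product of powers of such $p$ with divisors of $\#(\OK/N\OK)^{\times}$, so the assumption $\val_{\varpi}(T\#(\OK/N\OK)^{\times})=0$ yields the numerical hypothesis $\val_{\varpi}(\#(\OK/\fN\OK)\#(\OK/\fN\OK)^{\times}[\mK_{0,n}(\fM):\mK_{0,n}(\fN)])=0$ of Theorem~\ref{thmmain}.

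Applying Theorem~\ref{thmmain} (after enlarging the coefficient field to contain $L^{\rm alg}(2n+t/2,f,\xi;\st)$ and $\pi^{n^2}/\vol(\mF_{\mK_{0,n}(\bfZ)})$) produces $f'\in\mS_{n,2k+2m}(\bfZ)$, orthogonal to $f$, with $\Oo$-integral Fourier coefficients, such that $f\equiv f'\pmod{\varpi^{b'}}$ whenever $-b'=\val_{\varpi}\bigl(\pi^{n^2}\vol(\mF_{\mK_{0,n}(\bfZ)})^{-1}\overline{L^{\rm alg}(2n+t/2,f,\xi;\st)}\bigr)<0$; moreover $f'\neq0$ by Remark~\ref{couldbezero}, because $f$ has a $\varpi$-unit Fourier coefficient (Corollary~\ref{concretematrix}) and hence $f\not\equiv0\pmod{\varpi}$. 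Thus it suffices to prove
\begin{equation*}
\frac{\pi^{n^2}}{\vol(\mF_{\mK_{0,n}(\bfZ)})}\,\overline{L^{\rm alg}(2n+t/2,f,\xi;\st)}=(\text{$\varpi$-unit})\cdot\frac{\mU}{\eta_{\phi}\,\mV\,\bigl(\prod_{i=2}^{n}L(i,\chi_K^i)/\pi^i\bigr)^{2}},
\end{equation*}
since then, using $\val_{\varpi}(\mU)=0$ and the $\varpi$-integrality of each $L(i,\chi_K^i)/\pi^i$ for $2\leq i\leq n$ (for $i$ even a rational multiple of a Bernoulli number whose denominator is prime to $\ell$, as $\ell$ is odd, $\ell>i$ and $(\ell-1)\nmid i$; for $i$ odd, via the functional equation, a rational multiple of the generalized Bernoulli number $B_{i,\chi_K}$, which is $\varpi$-integral as $\ell\nmid D_K$ and $\ell>i$), one gets $b'=\val_{\varpi}(\eta_{\phi}\mV)+2\val_{\varpi}\bigl(\prod_{i=2}^{n}L(i,\chi_K^i)/\pi^i\bigr)\geq b>0$, whence $-b'<0$ and $f\equiv f'\pmod{\varpi^{b}}$.

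The bulk of the work, and the main obstacle, is this valuation identity. For the denominator: twisting by the unitary character $\beta$ is an isometry, so $\langle f,f\rangle=\langle I_{\phi},I_{\phi}\rangle$, and by \eqref{normalizedinner} $\vol(\mF_{\mK_{0,n}(\bfZ)})\langle f,f\rangle=\langle f,f\rangle_{\mK_{0,n}(\bfZ)}=\langle I_{\phi},I_{\phi}\rangle_{G_n(\hat{\bfZ})}$, so the volume cancels the $\langle f,f\rangle$ sitting in $L^{\rm alg}$; substituting Katsurada's formula \eqref{den1}, using \eqref{eqn:eta} and the definition of $\mV$ to replace $\langle\phi,\phi\rangle_{\Gamma_{\phi}}\prod_iL(-,\Sym^2\phi\otimes\chi_K^{-})$ by $\eta_{\phi}\mV(\Omega_{\phi}^+\Omega_{\phi}^-)^{n}$ times a power of $\pi$, noting $(*)$ is a $\varpi$-unit ($\ell\neq2$, $\ell\nmid D_K$) and that each $\Gamma_{\bfC}(s)=2(2\pi)^{-s}\Gamma(s)$ contributes a $\varpi$-unit times a power of $\pi$ (the factorials being units since $\ell$ is odd and exceeds their arguments $\leq n+2k$), gives $\langle I_{\phi},I_{\phi}\rangle_{G_n(\hat{\bfZ})}=(\text{$\varpi$-unit})\cdot\eta_{\phi}\mV(\Omega_{\phi}^+\Omega_{\phi}^-)^{n}\prod_{i=2}^{n}L(i,\chi_K^i)/\pi^i$. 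For the numerator: $\overline{L^{\rm alg}(2n+t/2,f,\xi;\st)}=L^{\rm alg}(2n+t/2,f,\xi^{-1};\st)$ (complex conjugation exchanges $\xi$ with $\xi^{-1}$, fixes $\langle f,f\rangle$ and the powers of $\pi$, and the standard $L$-function is unchanged under the relevant conjugate-self-duality), and then $L(2n+t/2,f,\xi^{-1};\st)$ factors, after isolating the finitely many Euler factors at primes dividing $D_K$ (which are precisely $L_{D_K}(2n+t/2,\Psi_{\beta}^{-1}(I_{\phi}),\xi^{-1};\st)$), as $\prod_{i=1}^{n}L^{D_K}(n+t/2+k+m-i+1,\BC(\phi)\otimes\xi^{-1}\beta)$ times that bad factor by \eqref{num1} and \eqref{discrepancy} (tracking the $\beta$-twist through $I_{\phi}=\Psi_{\beta}(f)$); matching against the definition of $\mU$ and summing $\pi$-exponents gives $\overline{L(2n+t/2,f,\xi;\st)}=\mU\,(\Omega_{\phi}^+\Omega_{\phi}^-)^{n}\,\pi^{\,n(n+2k+2m+t+1)}\big/\prod_{i=2}^{n}L(i,\chi_K^i)/\pi^i$. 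Dividing, the $(\Omega_{\phi}^+\Omega_{\phi}^-)^{n}$ cancel, the powers of $\pi$ cancel exactly (one checks $n^2+n(n+2k+2m+t+1)=n(2k+2m+2n+t+1)$), and the displayed identity follows. The genuinely delicate part is this last matching: correctly pairing the Euler factors of the standard $L$-function of $\Psi_{\beta}^{-1}(I_{\phi})$ (in Shimura's normalization, and with its convention of omitting primes dividing $\cond\xi$) with the base-change $L$-factors of $\phi$ twisted by $\xi^{-1}\beta$ prescribed by \eqref{num1}, keeping accurate track of the bad Euler factors that are absorbed into $\mU$ via $L_{D_K}$, and verifying that the two copies of $\prod_{i=2}^{n}L(i,\chi_K^i)/\pi^i$ — one entering through Katsurada's denominator formula, one through the normalization built into $\mU$ — are $\varpi$-integral, so that they can only increase $b'$ beyond $b$.
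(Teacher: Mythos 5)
Your proposal is correct and follows essentially the same route as the paper's proof: apply Theorem \ref{thmmain} to $f=\Psi_{\beta}^{-1}(I_{\phi})$ with $r=1_n$, $\tau$ as in Corollary \ref{concretematrix} and $\fN=N\bfZ$, then unwind $\pi^{n^2}\vol(\mF_{\mK_{0,n}(\bfZ)})^{-1}\ov{L^{\rm alg}}$ using (\ref{num1}), (\ref{den1}), (\ref{eqn:eta}) and Shimura's and Sturm's algebraicity results. The only divergence is bookkeeping, and there your version is the more accurate one: the extra factor $\bigl(\prod_{i=2}^{n}L(i,\chi_K^i)/\pi^i\bigr)^{2}$ you find in the denominator is really there (one copy from Katsurada's formula, one from the normalization of $\mU$), whereas the paper's final valuation identity tacitly treats it as a unit; your observation that each $L(i,\chi_K^i)/\pi^i$ is $\varpi$-integral, so the congruence depth is $b+2\val_{\varpi}\bigl(\prod_{i=2}^{n}L(i,\chi_K^i)/\pi^i\bigr)\geq b$, is exactly the right way to close that gap.
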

\begin{rem} The  assumptions in Theorem \ref{Ikedacong} ensure that the assumptions of Theorem \ref{thmmain} are satisfied. Note that in the current setup $F=\bfQ$ and since  we will apply Theorem \ref{thmmain} for the eigenform $\Psi_{\beta}^{-1}(I_{\phi})$, the weight $k$ in Theorem \ref{thmmain} is replaced by $2k+2m$ here. Also note that Lemma \ref{integers1} guarantees that the Fourier coefficients of $I_{\phi}$ are $\Oo$-integral. Furthermore, we pick $r$ in Theorem \ref{thmmain} to be $I_n$ and we choose $\tau$ to be as in Corollary \ref{concretematrix}. The conditions that $\gamma(\tau)=1$ or $p$ easily imply that condition (\ref{reldiff}) is satisfied, i.e., that $\{g^* \tau g \mid g \in \OK^n\}=\bfZ$.
Hence Corollary \ref{concretematrix} guarantees that for this choice of $r$ and $\tau$ we get $\val_{\varpi}(e(-i \tr \tau)c_{I_{\phi}}(\tau,r))=0$. Also note that $\tau$ and $T$ enter the statement of Theorem \ref{Ikedacong} only via the assumption on the valuations of $\#(\OK/N\OK)^{\times}$. 
\end{rem}

\begin{rem}
Since Theorem \ref{Ikedacong} holds for any character $\xi$ with the specified properties, it is likely that the assumption $\val_{\varpi}(\mU)=0$ always holds for some choice of $\xi$. For a more detailed explanation cf. e.g. \cite[Section 5]{BrownKeatonPJM}.  See also Section \ref{sec:examples} for examples.
\end{rem}

\begin{proof} In this proof set $\mK=G_n(\hat{\bfZ})$. As noted in Theorem \ref{thmmain} (and using (\ref{normalizedinner})) we have
\begin{equation*}
L^{\rm alg}(2n+t/2, \Psi_{\beta}^{-1}(I_{\phi}), \xi;\st) = \frac{L(2n+t/2, \Psi_{\beta}^{-1}(I_{\phi}), \xi; \st)\vol(\mF_{\mK})}{\pi^{n(2n+(2k+2m)+t+1)} \langle \Psi_{\beta}^{-1}(I_{\phi}), \Psi_{\beta}^{-1}(I_{\phi}) \rangle_{\mK}} \in \ov{\bfQ}.
\end{equation*} 
One easily checks (cf. e.g. \cite[Lemma 8.6]{Klosin15}) that $ \langle \Psi_{\beta}^{-1}(I_{\phi}), \Psi_{\beta}^{-1}(I_{\phi}) \rangle_{\mK} = \langle I_{\phi} , I_{\phi} \rangle_{\mK}.$  Using (\ref{num1}) and (\ref{den1}) we see that
$$\ov{L_{D_{K}}(2n+t/2, \Psi_{\beta}^{-1}(I_{\phi}),\xi;\st)L^{\alg}(2n+t/2, \Psi_{\beta}^{-1}(I_{\phi}),\xi;\st)}$$
equals (cf. also \cite[p. 849]{Klosin15})
\be \label{ratio1}
\frac{\pi^{-n(2n+(2k+2m)+t+1)}\prod_{i=1}^{n} L^{D_K} (n+t/2+k+m-i+1, \BC(\phi)\otimes \xi^{-1}\beta)}{\left<\phi, \phi\right> \prod_{i=2}^n  L(i+2k-1, \Sym^2 \phi \otimes \chi_K^{i+1})L(i, \chi_K^i)\Gamma_{\bfC}(i+2k-1)\Gamma_{\bfC}(i)^2}\vol(\mF_{\mK})
\ee
for $n=2m+1$ and
\be \label{ratio2}
\frac{\pi^{-n(2n+(2k+2m)+t+1)}\prod_{i=1}^{n} L^{D_K} (n+t/2+k+m-i+1, \BC(\phi)\otimes\xi^{-1}\beta)}{\left<\phi, \phi\right> \prod_{i=2}^n  L(i+2k, \Sym^2 \phi \otimes \chi_K^{i})L(i, \chi_K^i)\Gamma_{\bfC}(i+2k)\Gamma_{\bfC}(i)^2}\vol(\mF_{\mK})
\ee
for $n=2m$.

Using the definition of $\Gamma_{\bfC}$ expressions (\ref{ratio1}) and (\ref{ratio2}) become (here $u$ is some $\varpi$-adic unit)
 \be \label{ratio1'}
 \frac{\pi^{-n(n+(2k+2m)+t+1)}\prod_{i=1}^{n} L^{D_K} (n+t/2+k+m-i+1, \BC(\phi)\otimes \xi^{-1}\beta)u }{\left<\phi, \phi\right> \prod_{i=2}^n L(i, \chi_K^i)\pi^{-i} L(i+2k-1, \Sym^2 \phi \otimes \chi_K^{i+1})\pi^{-2k-2i+1}}\frac{\vol(\mF_{\mK})}{\pi^{n^2}}
 \ee
 for $n=2m+1$ and
\be \label{ratio2'}
\frac{\pi^{-n(n+(2k+2m)+t+1)}\prod_{i=1}^{n} L^{D_K} (n+t/2+k+m-i+1, \BC(\phi)\otimes \xi^{-1}\beta)u }{\left<\phi, \phi\right> \prod_{i=2}^n  L(i, \chi_K^i)\pi^{-i} L(i+2k, \Sym^2 \phi \otimes \chi_K^{i})\pi^{-2k-2i}}\frac{\vol(\mF_{\mK})}{\pi^{n^2}}
\ee
for $n=2m$.

For a Hecke character $\psi$ of $K$ of infinity type $(z/|z|)^u$ ($u\leq 0$) we will write $$g_{\psi}=\sum_{j=1}^{\infty} a_{g_{\psi}}(j)q^j\quad  \textup{with}\quad   a_{g_{\psi}}(j)=\sum_{\substack{\fa\subset \OK \hspace{1pt} \textup{ideal}\\ N(\fa)=j}}j^{u/2}\psi(\fa)$$ for the associated modular form of weight $-u+1$ (which is a cusp form if $u<0$). Observe that we have $L(s,g_{\psi}) = L(s - u/2, \psi)$.  It is easy to check one has
 $$L^{D_K}(s, \BC(\phi)\otimes \psi) = L^{D_K}(s-u/2, \phi \otimes g_{\psi}),$$
 where $L^{D_{K}}(s, \phi \otimes g_{\psi})$ is the convolution $L$-function which for $s \in \bfC$ with sufficiently large real part is defined by $$\prod_{p \nmid D_K}\{ (1-\alpha_{\phi}\alpha_{g_{\psi}} p^{-s}) (1-\alpha_{\phi}\beta_{g_{\psi}} p^{-s}) (1-\beta_{\phi}\alpha_{g_{\psi}}p^{-s}) (1-\beta_{\phi}\beta_{g_{\psi}} p^{-s})\}^{-1}$$ with $\alpha_{\phi}, \beta_{\phi}$ and $\alpha_{g_{\psi}}, \beta_{g_{\psi}}$ the Satake parameters of $\phi$ and $g_{\psi}$ respectively where, as above, the Satake parameters are normalized arithmetically.

We note that the character $\xi^{-1}\beta$ has infinity type $(z/|z|)^{2k+2m+t}$, hence the character $\xi \beta^{-1}$ has infinity type $ (z/|z|)^{-2k-2m-t}$ and the number $-2k-2m-t$ is by our assumption on $t$  a  negative number. Thus the cusp form corresponding to $\xi\beta^{-1}$ is $g=g_{\xi\beta^{-1}}$ 
 which is of weight $2k+2m+t+1>0$.
 Hence the $L$-function in the numerator of (\ref{ratio1}) equals

\be \begin{split}
&\ov{L^{D_K}(n+t/2+k+m-i+1, \BC(\phi)\otimes \xi\beta^{-1})}\\
=& \ov{L^{D_K}(n+t+2k+2m-i+1, \phi\otimes g)}\\
=&L^{D_K}(n+t+2k+2m-i+1, \phi^c\otimes g^c),
\end{split}\ee where $c$ denotes conjugating the Fourier coefficients.

 Since $t<\textup{min}\{-6,-4n\}$ we see that for every $i\in \{1,2,\dots, n\}$ the point of evaluation $n+2k+2m+t-i+1$ satisfies
 $$2k+2m+t < 2k+2m+t+n -i+1<2k,$$
 i.e., the points of evaluation lie strictly between the weights of $\phi$ and $g$, hence they are critical points in the sense of Deligne. We have the following result due to Shimura.

\begin{thm}
 If $\phi$ and $\phi'$ are two cuspidal eigenforms of weights $l,l'$ respectively, and $l>l'$ then for all integers $M$ such that $l' \leq M <l$ one has $$\frac{\pi^{l'-1-2M}L(M,\phi\otimes \phi')}{\left<\phi,\phi\right>} \in \ov{\bfQ}.$$
 The values of $M$ in the above ranges are critical. \end{thm}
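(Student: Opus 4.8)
The approach is to follow Shimura's Rankin--Selberg argument, as in \cite{ShimuraMathAnn77} (see also \cite[pp.\ 219--221]{Sturm}); I sketch it here. Write $\phi=\sum a(n)q^n$ of weight $l$ and $\phi'=\sum b(n)q^n$ of weight $l'$ on a common congruence group. The first step is the unfolding identity: for $\mathrm{Re}(s)$ large one has
\[
\langle \phi,\ \overline{\phi'}\, E(\cdot,\bar s)\rangle \;=\; (\text{explicit }\Gamma\text{-factor})\cdot \pi^{\bullet}\cdot L(s,\phi\otimes\phi'),
\]
up to finitely many Euler factors and the symmetric square factor of $\phi'$, where $E(z,s)$ is the real-analytic Eisenstein series of weight $l-l'$ and suitable nebentypus. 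The key point is that for an integer $M$ with $l'\le M<l$ the special value $E(z,M)$ is a \emph{nearly holomorphic} modular form, i.e. a polynomial in $1/\mathrm{Im}(z)$ with holomorphic coefficients, whose Fourier coefficients become algebraic after division by an explicit power of $\pi$; this rests on the classical rationality of the Fourier coefficients of holomorphic and nearly holomorphic Eisenstein series (rationality of $\zeta(1-k)$, of the relevant generalized Bernoulli numbers, and of Cohen's Eisenstein coefficients).

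Next I would multiply by $\phi'$ and apply Shimura's holomorphic projection operator: $\psi:=\mathrm{Hol}(\phi'\cdot E(\cdot,M))$ is a \emph{holomorphic} cusp form of weight $l$ (here $l\ge 3$, so holomorphic projection is unproblematic), and, since $\mathrm{Hol}$ acts by an explicit arithmetic formula on Fourier coefficients, $\psi$ again has algebraic Fourier coefficients up to the same power of $\pi$. Because $\mathrm{Hol}$ is adjoint to the inclusion of the holomorphic subspace into the space of nearly holomorphic forms, $\langle\phi,\ \phi'E(\cdot,M)\rangle=\langle\phi,\psi\rangle$. Finally I would pick a basis of the space of cusp forms of weight $l$ consisting of normalized Hecke eigenforms $\phi=\phi_1,\phi_2,\dots$, all with algebraic Fourier coefficients; writing $\psi=\sum_i c_i\phi_i$ and solving the resulting linear system over $\overline{\bfQ}$ gives $c_i\in\overline{\bfQ}$. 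As distinct newforms are mutually orthogonal for the (possibly nebentypus-twisted) Petersson product, $\langle\phi,\psi\rangle=\overline{c_1}\,\langle\phi,\phi\rangle$, so $\langle\phi,\psi\rangle/\langle\phi,\phi\rangle\in\overline{\bfQ}$. Combining the displays and bookkeeping the powers of $\pi$ coming from the $\Gamma$-factor, the normalization of $E$, and the holomorphic projection produces the exponent $l'-1-2M$.

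The genuinely delicate points are two. First, one must confirm that $E(z,M)$ is nearly holomorphic for the full range $l'\le M<l$, rather than a real-analytic Eisenstein series of the "other" kind for which the argument would have to begin from a non-holomorphic object; this is exactly the case distinction that Shimura makes. Second, the precise power of $\pi$ and the precise normalization of the period $\langle\phi,\phi\rangle$ — as opposed to the normalization $\langle\phi,\phi\rangle_S$ of Remark \ref{power of pi} — must be tracked through every step; this is the main piece of bookkeeping, though it is routine. The assertion that the integers $M$ with $l'\le M<l$ are critical in Deligne's sense is then a short check: the archimedean $L$-factor of $\phi\otimes\phi'$ is, up to a shift, $\Gamma_{\bfC}(s)\Gamma_{\bfC}(s-l'+1)$, and neither it nor the $\Gamma$-factor of its functional-equation partner has a pole at such $M$.
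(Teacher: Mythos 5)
The paper gives no proof of this theorem beyond the citation to Shimura's Math.\ Ann.\ paper (plus the $\pi/3$ normalization remark), and your sketch is precisely Shimura's Rankin--Selberg argument — unfolding against a nearly holomorphic Eisenstein series, holomorphic projection, and decomposition along a Hecke eigenbasis with algebraic Fourier coefficients — so your route coincides with the source the paper relies on, including the correct identification of the two delicate points (the case distinction on which Eisenstein series is nearly holomorphic over the full range $l'\le M<l$, and the $\pi$-power bookkeeping). The one small slip is that the extra factor produced by unfolding is the Dirichlet $L$-function of the product of the two nebentypus characters (contributing only algebraic multiples of powers of $\pi$ at the relevant points), not the symmetric square factor of $\phi'$, which arises only in the self-convolution case; this does not affect the argument.
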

\begin{proof} This is stated on \cite[p. 218]{ShimuraMathAnn77}. For the discrepancy in the exponent of $\pi$ between our statement and the formula in [loc.cit.], see Remark \ref{power of pi}. \end{proof}

 Applied to our case ($l=2k$ if $n = 2m+1$, $l = 2k+1$ if $n = 2m$, and $l'=2k+2m+t+1$) this implies that
 $$\frac{\pi^{-2n-2k-2m-t+2i-2}L(n+2k+2m+t-i+1, \phi\otimes g)}{\left<\phi,\phi\right>} \in \ov{\bfQ} \quad \textup{for $i \in \{1,2,\dots, n\}$}.$$
 Note that the power of $\pi$ in $\mU$ (not involved in normalizing the Dirichlet $L$-function) equals \begin{align*}
 \sum_{i=1}^n (-2n-2k-2m-t+2i-2) &= -n(2n+2k+2m+t+2) +2\sum_{i=1}^n i \\
    &= -n(n+2k+2m+t+1)
 \end{align*}
 and  that $L(i, \chi_K^i)\pi^{-i} \in \ov{\bfQ}$.  Thus we have proved that $\mU \in \ov{\bfQ}_{\ell}$.

Hence both (\ref{ratio1'}) and (\ref{ratio2'}) equal (taking into account \eqref{eqn:eta})
$$u\frac{i(\phi)}{\eta_{\phi}}\frac{\mU}{\mV}\frac{\vol(\mF_{\mK})}{\pi^{n^2}},$$ where $u$ is a $\varpi$-adic unit.
To summarize we have shown
\begin{align*}
\val_{\varpi}\left(L^{\rm alg}(2n+t/2, \Psi_{\beta}^{-1}(I_{\phi}), \xi^{-1};\st)\frac{\pi^{n^2}}{\vol(\mF_{\mK})}\right) &= \val_{\varpi}(
\mU/(\eta_{\phi}\mV))\\
    &= -\val_{\varpi}(\eta_{\phi}\mV)\\
    & =-b.
\end{align*}
The result now follows directly from Theorem \ref{thmmain} and the fact that $f'\neq 0$ follows from Remark \ref{couldbezero} and Theorem \ref{not cong to zero}. \end{proof}

\begin{rem} Theorem \ref{not cong to zero} ensures that there is a genuine congruence to depth $b$ between $I_{\phi}$ and some orthogonal form $f'$, i.e., that no part of that congruence comes from the fact that $I_{\phi}$ itself is congruent to zero to some depth. We also note that Theorem \ref{not cong to zero} is indeed necessary for the construction of such a congruence, because our method does not allow for any rescaling of the lift as such a rescaling would change the inner products used in the derivation of the congruence. \end{rem}

\section{Congruence with respect to integral periods} \label{congint}

It is possible that the congruence constructed in Theorem \ref{Ikedacong} is ``inherited'' directly from a congruence between $\phi$ and another newform $\phi'\in S_{2k}(1)$ (or $S_{2k+1}(D_K, \chi_K)$). 

 It is well-known \cite{HidaSugaku89} that for $\ell$ as in the last section (i.e., in particular $\ell>2k+1$, $\ell \nmid i(\phi)$) the mod $\ell^r$ congruences between $\phi$ and another $\phi'$ are controlled  by the ratio \be \label{cong id gen} \eta_{\phi}:=\frac{\langle \phi,\phi\rangle_{\Gamma_{\phi}}}{\Omega_{\phi}^+\Omega_{\phi}^-}\in \ov{\bfZ}_{\ell}\ee at least when $\phi$ is ordinary (see e.g., \cite[Section 5]{BrownCompMath07} for definitions of $\Omega_{\phi}^{\pm}$ as well as the discussion of the ratio).

 In this section we will derive  (under certain assumptions)  a stronger version of Theorem \ref{Ikedacong} where the condition $\val_{\ell}(\eta_{\phi}\mV)>0$ is replaced by $\val_{\ell}(\mV)>0$, and in return the resulting congruence is between $I_{\phi}$ and an automorphic form $f' \in \mM_{n,2k+2m}(G_n(\hat{\bfZ}))$, which does not arise as an Ikeda lift.
 However, in doing so, we will make use of the following conjecture.

\begin{conj}\label{Skinner1} Fix a rational prime $\ell$.  Let $f \in \mS_{n, k, \nu}(\mK)$ be a Hecke eigenform of central character $\omega$ for $\mK \subset G_{n}(\bfA_{\bfQ,\bff})$ an open compact subgroup. Then there exists a continuous semi-simple representation $\rho_f: G_K \to \GL_{2n}(\ov{\bfQ}_{\ell})$ such that \begin{itemize}
\item[(i)] $\rho_f$ is unramified at all finite places not dividing $D_K$;
\item[(ii)] One has $L^{D_K}(s, \rho_f) = L^{D_K}\left(s, \BC(f)\otimes \omega^c; {\rm st}\right)$,
\item[(iii)] If $\ell \nmid D_K$, then for any $\fp \mid \ell$, the representation $\rho_f|_{D_{\fp}}$ is crystalline.
\end{itemize}
\end{conj}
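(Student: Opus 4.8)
Because the statement is a conjecture rather than a theorem, what follows is an outline of the strategy by which results of this shape are proved in the literature, together with an indication of why a complete argument is not yet available (which is precisely why we take it as a hypothesis). The plan is to pass from the holomorphic Hecke eigenform $f$ on $\U(n,n)$ to an automorphic representation of a general linear group over $K$, where Galois representations are accessible. Concretely, one would first invoke the stable base change and endoscopic classification for quasi-split unitary groups (Mok, with the refinements of Kaletha--Minguez--Shin--White for similitude and inner-form issues) to attach to $f$ an isobaric automorphic representation $\Pi=\BC(f)$ of $\GL_{2n}(\bfA_K)$ which is conjugate self-dual with respect to $K/F$ and whose partial $L$-function matches $L^{D_K}(s,\BC(f)\otimes\omega^c;\st)$ after Shimura's normalization; the twist by the central character $\omega^c$ records the similitude factor, and the passage from the Satake parameters $\lambda_{\up,i}$ of $f$ to those of $\Pi$ is exactly the unramified local base-change identity. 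Since $f$ is holomorphic of (at least) parallel weight $k$, $\Pi_\infty$ is cohomological.

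Second, one would attach $\rho_f$ to $\Pi$. When $\Pi$ is cuspidal, regular algebraic and essentially conjugate self-dual, $\rho_f$ is produced by the standard constructions in the cohomology of unitary Shimura varieties (Chenevier--Harris, Shin, Morel, and in the torsion or non-tempered generality Harris--Lan--Taylor--Thorne and Scholze). Property (i) then follows because $f$ is spherical away from $D_K$ and local-global compatibility at unramified places identifies Frobenius eigenvalues with Satake parameters; property (ii) is local-global compatibility at the remaining finite places combined with the very definition of the standard $L$-function recalled in Section \ref{Notation}; and property (iii), crystallinity at $\fp\mid\ell$ when $\ell\nmid D_K$, follows from good reduction of the relevant Shimura data at $\ell$ together with $\Pi_\ell$ being unramified, via Faltings' comparison theorems (or the integral $p$-adic Hodge theory built into the HLTT/Scholze constructions), exactly as in the conjugate self-dual case.

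The main obstacle, and the reason we only conjecture this, is twofold. First, for $n>1$ a parallel weight $k$ need not be regular, so $\Pi_\infty$ can be a non-regular, limit-of-discrete-series cohomological representation; attaching Galois representations then requires $p$-adic interpolation along an eigenvariety (higher Coleman theory, or the HLTT congruence method), and propagating unramifiedness and crystallinity through that limiting process is delicate. Second, $\U(n,n)$ has signature $(n,n)$ at every archimedean place, so the associated Shimura variety is non-compact and holomorphic forms contribute to degree-zero \emph{coherent} cohomology rather than to étale cohomology directly; relating coherent to étale cohomology is the genuinely hard input. A further subtlety, pertinent since we ultimately apply this to the Ikeda lift $I_\phi$, is that the base change of a CAP form such as $I_\phi$ is a non-cuspidal isobaric sum, so $\rho_f$ is then reducible and one must track each constituent separately; this is consistent with (ii) because, as recorded in (\ref{num1}), the standard $L$-function of $I_\phi$ factors as a product of $\GL_2$-type base-change $L$-functions. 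For all these reasons we content ourselves with assuming Conjecture \ref{Skinner1}.
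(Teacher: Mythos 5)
The paper offers no proof of this statement: it is stated as a conjecture and assumed as a hypothesis, with the accompanying remark noting only that the sole reference the authors know (Skinner's Theorem B) excludes exactly the parallel-weight case needed here. Your outline is consistent with that remark — in particular you correctly isolate the non-regularity of parallel weight (and the coherent-versus-\'etale cohomology issue for the non-compact $\U(n,n)$ Shimura variety) as the reason the statement remains conjectural — so there is nothing to compare beyond observing that your diagnosis matches the paper's own justification for leaving this as an assumption.
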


\begin{rem} The above conjecture is widely regarded as a known result, however the only reference we know of in which the existence of Galois representations attached to automorphic forms on $G_n(\bfA_{\bfQ})$ is proven is the article of Skinner
 \cite[Theorem B]{SkinnerUnitaryGaloisReps}.
 However, the proof excludes the parallel weight case, and while the author states that this hypothesis may be relaxed, he goes on to say that this case would not be addressed in [loc.cit.]. Let us also mention that a similar remark to Remark \ref{discrepancy} regarding different normalization of $L$-functions concerns \cite{SkinnerUrbanInventMath14}. \end{rem}



From now on as in the last section, let $\ell \nmid 2D_K$, $\ell >n+2k-1$ be a prime and $\phi$ be a newform in $S_{2k}(1)$ (if $n=2m+1$) or in $S_{2k+1}(D_K, \chi_K)$ (if $n=2m$). Let $\rho_{\phi}: G_{\bfQ} \to \GL_2(\ov{\bfQ}_{\ell})$ be the $\ell$-adic Galois representation associated to $\phi$ by Deligne et al. and write  $\rho_{\phi}(j)$ for its $j$th Tate twist. We will also assume that the residual Galois representation $\ov{\rho}_{\phi}: G_{\bfQ} \to \GL_2(\ov{\bfF}_{\ell})$ is irreducible when restricted to $G_K$.
This combined with (\ref{num1}) and Conjecture \ref{Skinner1} implies that $\rho_{I_{\phi}}$ is isomorphic to
\be \label{eqrepr}
\epsilon^{-k-m+n}\otimes \bigoplus_{j=0}^{n-1} \rho_{\phi}(j)|_{G_K}
\ee
where $\epsilon$ is the $\ell$-adic cyclotomic character.


Let $S_0'=\{\phi_1=\phi, \phi_2, \dots, \phi_{r'}\}$ be the subset of a  basis of newforms of $S_{2k}(1)$ (if $n=2m+1$) or of $S_{2k+1}(D_K, \chi_K)$ (if $n=2m$) consisting of forms congruent to $\phi$ (mod $\varpi$). Write $S_1':=\{f_1=I_{\phi}, \dots, f_{r'}=I_{\phi_{r'}}\}$ for the corresponding set of Ikeda lifts.
Since $\ov{\rho}_{\phi}|_{G_K}$ is irreducible, it must have a non-abelian image, hence it follows from Theorem \ref{not cong to zero} that all $f_i\in S'_1$ are non-zero.
 We choose a subset $S_0$ of $S'_0$ so that the elements of $S_1:=\{f_i =I_{\phi_i}\in S'_1\mid \phi_i \in S_0\}$ are  linearly independent and span the same subspace of $\mS_{2k+2m}(G_n(\hat{\bfZ}))$ as $S'_1$. Renumbering the elements of $S'_0$ and hence of $S'_1$ if necessary we may assume that $S_0=\{\phi_1=\phi, \phi_2, \dots, \phi_r\}$ and $S_1=\{f_1=I_{\phi}, f_2, \dots f_r\}$ for some $r \leq r'$.
By Corollary \ref{cong between phis} we know that $f_i \equiv f_1$ (mod $\varpi$) for all $1\leq i \leq r$.  We complete this set to a basis  $f_1, \dots, f_r, f_{r+1}, \dots, f_s$ of the subspace $\mW$ of $\mS_{2k+2m}(G_n(\hat{\bfZ}))$ containing all eigenforms  which are congruent to $f_1$ (mod $\varpi$)  (in the sense of Definition \ref{def of cong})
and  note that by Theorem \ref{not cong to zero} we must have $f_i \not\equiv 0 \pmod{\varpi}$ for all $1\leq i \leq s$.


For $\sigma \in G_K$ let $\sum_{j=0}^{2n} c_j(i,\sigma)X^j\in \Oo[X]$ be the characteristic polynomial  of $\rho_{f_i}(\sigma)$, where $\rho_{f_i}$ is the Galois representation attached to $f_i$ (Conjecture \ref{Skinner1}). Here $\Oo$ is the valuation ring of some sufficiently large finite extension of $\bfQ_{\ell}$. Put $c_j(\sigma):= \bmat c_j(1,\sigma) \\ \vdots \\ c_j(s, \sigma)\emat \in \Oo^s$ for $j=0,1, \dots, 2n.$ Let $\bfT$ be the $\Oo$-subalgebra of $\Oo^s$ generated by the set $\{c_j(\sigma) \mid 0 \leq j \leq 2n, \sigma \in G_K\}$.  The algebra $\bfT$ acts on $\mW$ by 
 $$Tf=\bmat t_1\\ \vdots \\ t_s\emat \sum_{i=1}^s \alpha_i f_i:= \sum_{i=1}^s \alpha_i t_i f_i.$$
We will say that $\bfT$ \emph{preserves the integrality of Fourier coefficients} if whenever $f \in \mW$ has $\Oo$-integral Fourier coefficients, so does $Tf$ for all $T \in \bfT$ (cf. Definition \ref{def of cong}).

\begin{rem} It follows from the Tchebotarev Density Theorem that $\bfT$ is generated by the set $\{c_j(\Frob_{\fp}) \mid 0 \leq j \leq 2n, \fp \nmid D_K\ell\}$. By Conjecture \ref{Skinner1} (ii), each $c_j(\Frob_{\fp})$ is a polynomial in Hecke operators. The action of each such Hecke operator on Fourier coefficients (albeit tedious) should be straightforward to compute (we refer the reader for example to \cite[(5.6)]{KlosinAnnInstFourier2009} where this is done for $n=2$) and from this one should be able to  see that these polynomials preserve the integrality of Fourier coefficients. Hence  in fact we expect that $\bfT$ always preserves the integrality of Fourier coefficients, but we do not pursue the full proof here since the theorem   below (for which the integrality is used) is already conditional on Conjecture \ref{Skinner1} and a full proof may be computationally involved.   \end{rem}


\begin{thm}\label{integral1} Assume that Conjecture \ref{Skinner1} holds. Keep the assumptions of Theorem \ref{Ikedacong}, but replace the definition of $b = \val_{\varpi}(\eta_{\phi} \mV)$ with $b = \val_{\varpi}(\mV)$. Suppose furthermore that $\phi$ is ordinary at $\ell$ with  $\ov{\rho}_{\phi}|_{G_K}$  absolutely irreducible and that the algebra $\bfT$ preserves the integrality of Fourier coefficients.
Then the claims of Theorem \ref{Ikedacong} hold true, but the form $f' \in \mM_{n,2k+2m}(G_n(\hat{\bfZ}))$ congruent to $\Psi_{\beta}^{-1}(I_{\phi})\pmod{\varpi^b}$ is not an Ikeda lift of any newform $\phi'\in S_{2k}(1)$ if $n=2m+1$ (resp. $\phi' \in S_{2k+1}(D_K, \chi_K)$ if $n=2m$).
\end{thm}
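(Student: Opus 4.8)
The plan is to upgrade the congruence produced in Theorem~\ref{Ikedacong} from one modulo $\varpi^{b'}$ with $b'=\val_\varpi(\eta_\phi\mV)$ to one modulo $\varpi^b$ with $b=\val_\varpi(\mV)$, and then to show the target form can be chosen outside the span of Ikeda lifts. First I would invoke Theorem~\ref{Ikedacong} as stated; it produces a non-zero eigenform $h'\in\mS_{n,2k+2m}(G_n(\hat\bfZ))$, orthogonal to $\Psi_\beta^{-1}(I_\phi)$, with $h'\equiv \Psi_\beta^{-1}(I_\phi)\pmod{\varpi^{b'}}$. Since $b'=\val_\varpi(\eta_\phi)+\val_\varpi(\mV)$ and $\val_\varpi(\eta_\phi)\ge 0$ (as $\phi$ is ordinary at $\ell$, by~\eqref{eqn:eta} and the cited reference), we have $b'\ge b$, so a fortiori $h'\equiv\Psi_\beta^{-1}(I_\phi)\pmod{\varpi^b}$. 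The content of the theorem is therefore \emph{not} the depth of the congruence but the assertion that the orthogonal form may be taken not to be an Ikeda lift. The idea is that the congruence we really want to ``use up'' is the part of depth $\val_\varpi(\mV)$, and the extra depth $\val_\varpi(\eta_\phi)$ is precisely accounted for by congruences among the $\phi_i$ themselves (Corollary~\ref{cong between phis}), which we want to factor out.

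The key step is to work inside the space $\mW$ spanned by the eigenforms congruent to $f_1=I_\phi\pmod\varpi$, with its ordered basis $f_1,\dots,f_r,f_{r+1},\dots,f_s$ where $S_1=\{f_1,\dots,f_r\}$ are exactly the Ikeda lifts of the $\phi_i\in S_0$ and $f_{r+1},\dots,f_s$ are non-Ikeda. Write the form $\Xi$ (more precisely its image under $\Psi_\beta$, or $\Xi$ directly in the $F=\bfQ$ setting) in this basis; by the inner product computation underlying Theorem~\ref{Ikedacong}, the projection of $\Xi$ onto $f_1$ has $\varpi$-valuation $-a$ with $a\ge b'$, while $\Xi$ itself has $\Oo$-integral Fourier coefficients. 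Decompose $\Xi = C_{f_1}f_1^\ast + (\text{rest})$ as in~\eqref{fir}; the ``rest'' lies in the span of $f_2,\dots,f_s$ plus the orthogonal complement of $\mW$, and after clearing denominators we obtain $u f_1 \equiv -\varpi^a g\pmod{\varpi^a}$ with $g$ integral, exactly as in Theorem~\ref{thmmain}. Now I would use the Hecke algebra $\bfT$ acting on $\mW$ together with the explicit shape~\eqref{eqrepr} of $\rho_{I_\phi}$: the characteristic-polynomial coefficients $c_j(\sigma)$ separate the Ikeda eigensystems $\{f_i\}_{i\le r}$ from the non-Ikeda ones, so there is an idempotent-like element $e\in\bfT\otimes\ov\bfQ_\ell$ projecting $\mW$ onto $\mathrm{span}(f_1,\dots,f_r)$; I would instead use an honest element $T\in\bfT$ that annihilates $f_1,\dots,f_r$ modulo a high power of $\varpi$ but acts invertibly (mod $\varpi$) on $f_{r+1},\dots,f_s$, obtained by the standard argument that the congruence module of $f_1$ inside $\bfT$ has length controlled by $\eta_\phi$ on the ``Ikeda part'' — so applying $T$ to $g$ and renormalizing kills the contribution of the other Ikeda lifts and leaves a form $f'$ which is a linear combination of non-Ikeda eigenforms, still congruent to $f_1$ modulo $\varpi^{b}$ because we only divided out the $\val_\varpi(\eta_\phi)$ worth of depth that came from the $\phi_i$-congruences. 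The hypothesis that $\bfT$ preserves integrality of Fourier coefficients is exactly what guarantees $Tg$ (and hence $f'$) still has $\Oo$-integral Fourier coefficients, so the congruence makes sense in the sense of Definition~\ref{def of cong}.

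The main obstacle I anticipate is the bookkeeping of valuations showing that exactly $\val_\varpi(\eta_\phi)$ of the depth is ``absorbed'' when we project away from the Ikeda part: one needs that the congruence module of $\phi$ in the Hecke algebra of elliptic modular forms, which is generated by $\eta_\phi$ by Hida's theory~\cite{HidaSugaku89}, maps (via $\phi\mapsto I_\phi$ and functoriality of~\eqref{num1},~\eqref{den1}) onto the congruence module governing the span of $f_1,\dots,f_r$ inside $\mW$, and that no further collapsing occurs — this is where absolute irreducibility of $\ov\rho_\phi|_{G_K}$ and ordinarity of $\phi$ are used, to ensure the Ikeda lifts $f_i$ are all nonzero (Theorem~\ref{not cong to zero}), linearly independent, and that the map on congruence modules is an isomorphism rather than merely a surjection. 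A secondary technical point is to make $T\in\bfT$ (not just in $\bfT\otimes\ov\bfQ_\ell$) while keeping it a $\varpi$-adic unit on the non-Ikeda part modulo $\varpi$; this is a routine application of the Chinese Remainder Theorem for the finitely many eigensystems occurring in $\mW$, combined with Conjecture~\ref{Skinner1}(ii) to express the relevant separating elements as polynomials in the $c_j(\Frob_\fp)$. Finally one checks $f'\neq 0$: it is congruent to $f_1\not\equiv 0\pmod\varpi$ by Theorem~\ref{not cong to zero}, so $f'\not\equiv 0\pmod\varpi$ and in particular $f'\neq 0$; and $f'$ is orthogonal to $\Psi_\beta^{-1}(I_\phi)$ because it lies in the span of eigenforms with eigensystems distinct from that of $I_\phi$, which are automatically orthogonal to it.
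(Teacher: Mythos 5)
Your high-level plan (use a Hecke operator built from the Galois representations to strip the other Ikeda lifts out of the error term and absorb exactly $\val_{\varpi}(\eta_{\phi})$ of the congruence depth) is the right one and matches the paper's strategy in spirit, but the operator you propose to do this is the wrong one, and the way you apply it does not produce a congruence. You ask for $T\in\bfT$ that annihilates $f_1,\dots,f_r$ modulo a high power of $\varpi$ but acts \emph{invertibly} mod $\varpi$ on $f_{r+1},\dots,f_s$. Every eigenform in $\mW$ lies in the same residual eigensystem as $f_1$ (that is how $\mW$ is defined), so any element of $\bfT$ that kills $f_1$ mod $\varpi$ also kills $f_{r+1},\dots,f_s$ mod $\varpi$; such a $T$ does not exist. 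Moreover, applying $T$ only to $g$ in the relation $uf_1^*=\varpi^a\Xi-\varpi^a g^*$ cannot yield a congruence between $f_1$ and $Tg$; one must apply $T$ to the whole identity \eqref{fir}, and for that the correct operator is one with $Tf_1=\eta_{\phi}f_1$ \emph{exactly} and $Tf_i=0$ for $2\le i\le r$: the factor $\eta_{\phi}$ then multiplies $C_{f_1}$, cancelling the $\eta_{\phi}$ in its denominator and converting the depth from $\val_{\varpi}(\eta_{\phi}\mV)$ to $\val_{\varpi}(\mV)$, while the components of $Tg^*$ along $f_2,\dots,f_r$ vanish. On the non-Ikeda forms $T$ only needs to act integrally (guaranteed by $T\in\bfT\subset\Oo^s$), not invertibly.

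The second gap is that you assert the existence of the separating element by appeal to ``the standard argument that the congruence module of $f_1$ inside $\bfT$ has length controlled by $\eta_{\phi}$ on the Ikeda part,'' which is precisely the point requiring proof. The paper constructs $T$ in two steps: first an element $g_0\in\Oo[G_K]$ with $\tr\rho(g_0)\phi_1=\eta_{\phi}\phi_1$ and $\tr\rho(g_0)\phi_i=0$ for $i>1$, coming from a deformation-theoretic result on the \emph{elliptic} side (this is where absolute irreducibility of $\ov{\rho}_{\phi}|_{G_K}$ is used, to place the image of $\rho$ in $\Mat_2$ of the elliptic Hecke algebra); second, a projector $e\in\Oo[G_K]$ that isolates the $j=0$ block in the Tate-twist decomposition \eqref{eqrepr} of $\rho_{I_{\phi}}$ — and it is here, not in the nonvanishing or linear independence of the $f_i$, that ordinarity of $\phi$ at $\ell$ is used. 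Then $T$ is any lift to $\bfT$ of $\tr\tilde{\rho}(eg_0)$, and the hypothesis that $\bfT$ preserves integrality of Fourier coefficients (which you do invoke correctly) makes the resulting congruence meaningful. Without the reduction to the elliptic congruence module via $g_0$ and the twist-separating projector $e$, your argument has no mechanism producing the exact factor $\eta_{\phi}$, and the claim that ``no further collapsing occurs'' is unsupported.
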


\begin{proof} As in \cite{BrownCompMath07} to achieve our goal we will show the existence of a certain `idempotent-like' Hecke operator in the Hecke algebra on $G_n(\AQ)$. However, instead of deducing its existence from explicit relations between Hecke operators on $G_n$ and $\GL_2$ (as in [loc.cit.]) we will construct it from the knowledge of the Galois representation of $I_{\phi}$ (a variant of this method was used in
\cite[Section 5.5]{KlosinAnnInstFourier2009}).
As the arguments for $n=2m$ are entirely analogous, we limit ourselves to the case $n=2m+1$.

Denote by $\bfT_f$ the quotient of $\bfT$ defined in the same way as $\bfT$ but with $r$ in place of $s$.
After choosing some $G_{\bfQ}$-invariant lattice we may assume that $\rho_{\phi_i}$ is valued in $\GL_2(\Oo)$. 
Write $\rho: \Oo[G_K] \to \Mat_{2}(\Oo^r)$ for the $\Oo$-algebra map sending $g=\sum_l a_l g_l \in \Oo[G_K]$ to $\sum_l a_l \bmat \rho_{\phi_1}(g_l)\\ \vdots \\ \rho_{\phi_r}(g_l)\emat$. By a standard argument (using the irreducibility of $\ov{\rho}_{\phi_i}|_{G_K}$) one can show that the image of $\rho$ is contained in $\Mat_2(\bfT_0)$, where $\bfT_0$ is the $\Oo$-Hecke algebra acting on $S_{2k}(1)$ (cf. e.g. \cite[Lemma 3.27]{DarmonDiamondTaylor95}). In particular $\tr \rho(g)$ can be interpreted as a Hecke operator. It thus follows from \cite[Proposition 8.14]{KlosinAnnInstFourier2009} that there exists $g_0 \in \Oo[G_K]$ such that $\tr\rho(g_0)\phi_1=\frac{\langle \phi,\phi\rangle}{\Omega_{\phi}^+\Omega_{\phi}^-}\phi_1$ and $\tr \rho(g_0)\phi_i=0$ for all $i>1$. The proposition in [loc.cit.] as stated applies only to $K=\bfQ(i)$, but the deformation-theoretic arguments in the proof are not sensitive to these restrictions and the result of Hida they reduce the proof to is also valid in full generality.

Now let $\tilde{\rho}: \Oo[G_K] \to \Mat_{2n}(\Oo^r)$ be  the $\Oo$-algebra map sending $g=\sum_l a_l g_l \in \Oo[G_K]$ to $\sum_l a_l \bmat \rho_{I_1}(g_l)\\ \vdots \\ \rho_{I_r}(g_l)\emat.$ By (\ref{eqrepr}) we see that $\tilde{\rho}$ decomposes as a direct product of Tate twists of $\rho$, hence one concludes that the image of $\tilde{\rho}$ is contained  in $\Mat_{2n}(\bfT_f)$. Using (\ref{eqrepr}) again and the fact that $\phi$ is ordinary at $\ell$ and working as in the proof of \cite[Proposition 5.14]{KlosinAnnInstFourier2009} we see that there exists a projector $e \in \Oo[G_K]$ such that $$\tilde{\rho}(e) = \bmat \rho(e) \\ & \rho(1)(e) \\ && \ddots \\ &&& \rho(n-1)(e) \emat= \bmat \rho(e) \\ & 0 \\ && \ddots \\ &&& 0 \emat.$$ Set $T_{\phi}:= \tr \tilde{\rho}(eg_0)$ with $g_0$ as above and let $T$ be any lift of $T_{\phi}$ to $\bfT$. It is clear that this $T$ has the desired properties. Now applying $T$ to both sides of (\ref{fir}) and amending the periods in the $L$-functions appearing in $C_{f_1}$ we get the desired result. We refer the reader to \cite{BrownCompMath07} or \cite{KlosinAnnInstFourier2009} for details.
\end{proof}

\section{Speculations about consequences for the Bloch-Kato conjecture} \label{Speculations about consequences for the Bloch-Kato conjecture}

Let us now specialize to the case $n=3$. 
The relevant $L$-values in this case are $$L_1:=\frac{ L(2k+1, \Sym^2 \phi \otimes \chi_K)}{\pi^{2k+3}\Omega_{\phi}^+\Omega_{\phi}^-}\in \ov{\bfQ}, \quad L_2:=\frac{ L(2k+2, \Sym^2 \phi)}{\pi^{2k+5}\Omega_{\phi}^+\Omega_{\phi}^-}\in \ov{\bfQ}.$$
For $i=1,2$ set $v_i:= \val_{\ell}(\#\Oo/L_i)$. 
The ($\varpi$-part of the) Bloch-Kato conjecture for a $G_F$-module $V$ with divisible coefficients(with $F$ a number field) predicts that $\val_{\ell}(\#H^1_f(F, V^{\vee}(1)))$ and  $\val_{\ell}(\#\Oo/L^{\rm alg}(V,0))$ (where $L^{\rm alg}(V,0)$ is the appropriately normalized $L$-value of $V$ at zero) should be related (in fact equal up to certain canonically defined factors - see e.g. \cite[Section 9]{Klosin15} for a precise statement). We have (to ease notation here and below we assume that the modules have divisible coefficients)
\be
\begin{split} V_1:= \ad^0\rho_{\phi} (2)\otimes \chi_K, & \quad V_1^{\vee} = \ad^0\rho_{\phi}(-2) \otimes \chi_K,\\
V_2:= \ad^0\rho_{\phi}(3), & \quad V_2^{\vee}=\ad^0\rho_{\phi}(-3).
\end{split}
\ee
So, the Bloch-Kato conjecture for $V_1$ relates $\val_{\ell}(\# H^1_f(\bfQ, \ad^0\rho_{\phi}(-1)\otimes \chi_K))$ to
$\val_{\ell} (\#\Oo/L^{\rm alg}(\ad^0\rho_{\phi}(2)\otimes\chi_K,0)) = v_1$, 
and for $V_2$ it relates
$ \val_{\ell}(\#H^1_f(\bfQ, \ad^0\rho_{\phi}(-2)))$ to $\val_{\ell} (\#\Oo/L^{\rm alg}(\ad^0\rho_{\phi}(3), 0))=v_2$. 

 By Theorem \ref{integral1} there exists $f'\in \mS_{2k+2}(G_3(\hat{\bfZ}))$ orthogonal to the space spanned by all the Ikeda lifts such that $f'\equiv I_{\phi}$ (mod $\varpi^{v_1+v_2}$).
Assume for simplicity that $v_1=0$ and $v_2=1$. Assuming the validity of Conjecture \ref{Skinner1} we get an $\ell$-adic Galois representation attached to $f'$ which is unramified away from $\ell$ and crystalline at $\ell$. Let us assume it is also irreducible. Then by a standard argument of Ribet \cite{RibetInvent76} we can find a Galois-invariant lattice inside this representation such that with respect to that lattice its mod $\ell$-reduction $\rho: G_K \to \GL_6(\ov{\bfF}_{\ell})$ has the form
$$\rho(g) =\ov{\epsilon}(g)^{2-k} \bmat \ov{\rho}_{\phi}(g) & a(g) & b(g)\\ & \ov{\rho}_{\phi}(g)\ov{\epsilon}(g) & c(g) \\ & &\ov{\rho}_{\phi}(g)\ov{\epsilon}^2(g)\emat.$$
First note that both $a$ and $c$ give rise to elements in $H^1_f(K, \ad^0\rho_{\phi}(-1)|_{G_K})$. One has a natural splitting $$H^1_f(K,\ad^0\rho_{\phi}(-1)|_{G_K}) = H^1_f(K,\ad^0\rho_{\phi}(-1)|_{G_K})^+ \oplus H^1_f(K,\ad^0\rho_{\phi}(-1)|_{G_K})^-,$$ where the superscripts indicate the sign of the action of the complex conjugation. 
Suppose now one can show that $a,c \in H^1_f(K,\ad^0\rho_{\phi}(-1)|_{G_K})^-$. Let us explain how one can use our result to conclude that the validity of the Bloch-Kato conjecture for $V_1$ gives evidence for its validity for $V_2$. The inflation-restriction sequence gives an identification $$H^1(K,\ad^0\rho_{\phi}(-1)|_{G_K})^- = H^1(\bfQ,  \ad^0\rho_{\phi}(-1)|_{G_K}\otimes \chi_K)$$ and using this one can show that the classes $a,c$ lie in the Selmer group of $\ad^0\rho_{\phi}(-1)|_{G_K}\otimes \chi_K$ which by the Bloch-Kato conjecture for $V_1$ and our assumption that $v_1=0$ is trivial. So, $a,c$ must be trivial classes. Hence for $\rho$ to be non-semi-simple, $b$ has to give rise to a non-trivial class in $H^1(K, \ad^0\rho_{\phi}(-2)|_{G_K})$. Splitting $a$ and $c$ and rearranging the diagonal elements in $\rho$, we see that $\bmat \ov{\rho}_{\phi}|_{G_K} & b \\ & \ov{\rho}_{\phi}(2)|_{G_K}\emat$ is a subrepresentation of $\rho$ and hence also crystalline. Thus, $b$ in fact gives rise to a non-zero element in the Selmer group of $\ad^0\rho_{\phi}(-2)|_{G_K}$. This provides evidence for the Bloch-Kato conjecture for $V_2$. In fact this argument can be applied with $V_1$ and $V_2$ switched. Furthermore, with some more work it could be extended to arbitrary values of $v_1$ and $v_2$ to obtain the full equivalence of the validity of the  Bloch-Kato conjectures for $V_1$ and $V_2$. However, we do not proceed with the  general argument here, because we are unable to show that the classes $a$ and $c$ indeed lie in the minus part of the Selmer group, which at this stage reduces our argument to a mere speculation.

\begin{rem} In \cite{BrownKeatonPJM}, Keaton and the first-named author construct congruences for the (symplectic) Ikeda lift on the group $\Sp_{2n}$ of an elliptic modular form $\phi \in S_k(1)$ when $k$ and $n$ are even. These are controlled by the  $L$-value $L_{\rm alg}((k+n)/2, \phi) \prod_{j=1}^{n/2-1}L_{\rm alg}(2j+k, \Sym^2 \phi)$, where the subscript $\textup{alg}$ indicates the algebraic part. The paper also discusses the application of the result to the Bloch-Kato conjecture for $\phi$. Our current result can be viewed as partly complementary to that of \cite{BrownKeatonPJM} in the sense that it constructs congruences for the Ikeda lift of $\phi \in S_k(1)$ to $\U(n,n)$ for odd values of $n$, while \cite{BrownKeatonPJM} construct congruences for the Ikeda lift of $\phi$ to $\Sp_{2n}$ for even values of $n$. \end{rem}

\section{Examples}\label{sec:examples}

In this final section we provide examples of our main theorem.  We fix $K = \bfQ(\sqrt{-3})$ throughout the calculations so that $\chi_{K} = \left(\frac{-3}{\cdot}\right)$. Note that here that $h_{K} = 1$ which simplifies many of the assumptions.  We consider the case where $m=2$ and $n = 5$.  Furthermore, observe that we have $\#\mO_{K}^{\times} = 6$, so $\mJ(K) = 3$ in this case.

Fix $\phi \in S_{26}(\SL_2(\bfZ))$ to be the unique normalized eigenform, i.e.,
\begin{equation*}
\phi = q - 48q^{2} - 195804q^{3} - 33552128q^{4} - 741989850q^{5} +
9398592q^{6} +
\cdots.
\end{equation*}
Observe that this gives $\nu = -k-m = -15$, so $\mJ(K) \mid \nu$ as required.

%
Set
\begin{equation*}
L_{\alg}(j+25,\Sym^2 \phi \otimes \chi_{K}^{j+1}) = \frac{L(j+25,\Sym^2 \phi \otimes \chi_{K}^{j+1})}{\pi^{25+2j} \langle \phi, \phi \rangle}.
\end{equation*}
To calculate the values of $L_{\alg}$ listed below, we made use of SAGE \cite{sage} and Dokchitser's ComputeL program based on the work in \cite{DokchitserComputeL}. Using ComputeL, we computed the values $L_{\alg}$ and looked at continued fraction expansions of these with increasing levels of precision.  From this, it becomes clear where a ``cut-off'' in the continued fraction should occur to give the rational value of $L_{\alg}$. The same methods are used to compute the algebraic values of the convolution $L$-functions below as well. Thus, we have not proven the values below are correct, but have strong evidence they are the correct values.

 Using this method we have
\begin{align*}
L_{\alg}(27,\Sym^2 \phi \otimes \chi_{K}) &= \frac{2^{37} \cdot 523}{3^{33} \cdot 5^5 \cdot 7^2 \cdot 11 \cdot 13 \cdot 23}\\
L_{\alg}(28, \Sym^2 \phi \otimes \chi_{K}^2)
    &=\frac{2^{38}\cdot 5 \cdot 7^{2} \cdot 11 \cdot 13 \cdot 23 \cdot 31 \cdot 137}{3^{38} \cdot 5^6 \cdot 7^4 \cdot 11^2 \cdot 13^2 \cdot 23^2}\\
L_{\alg}(29, \Sym^2 \phi \otimes \chi_{K}) &= \frac{2^{38} \cdot 67 \cdot 139 \cdot 1609}{3^{40} \cdot 5^5 \cdot 7^2 \cdot 11^2 \cdot 13^2 \cdot 23^2}\\
L_{\alg}(30,\Sym^2 \phi \otimes \chi_{K}^2) &=  \frac{2^{31}\cdot 5 \cdot 11^{2} \cdot 19 \cdot 3463 \cdot 6761}{3^{44} \cdot 5^{6} \cdot 7^3 \cdot 11^2 \cdot 13^2 \cdot 19 \cdot 23^2 \cdot 29}.
\end{align*}
Observe since $\phi$ the only newform of weight 26 and full level, vacuously there are no non-trivial congruences between $\phi$ and other newforms of weight 26 and full level and thus $\eta_{\phi}$ is a unit.  This gives $\val_{\ell}(\mV) = 1$ for $\ell \in \mL:=\{31, 67, 137, 139, 523, 1609,3463,6761\}$. Moreover, $f$ is ordinary at all $\ell \in \mL$. We can ignore the condition that the residual Galois representations $\ov{\rho}_{\ell}$ be irreducible as that was only required to produce the Hecke operator that guaranteed we produce a congruence to a form that is not an Ikeda lift; as explained above $\phi$ is not congruent to any other newforms of the same weight and level so one does not have to worry about such congruences to other Ikeda lifts.

Clearly we have $\ell > 2k+2m = 30$ for all $\ell \in \mL$.  Observe that $\mL$ is relatively prime to $2 h_{K}D_{K}$ as required where we say $A$ is relatively prime to $\mL$ if $\val_{\ell}(A) = 0$ for all $\ell \in \mL$.  We also have that $\mL$ is relatively prime to $T \#(\mO_{K}/3\mO_{K})^{\times} = \#(\mO_{K}/3\mO_{K})^{\times}$. Finally, note that $\mL$ is relatively prime to $A_{3}$. Thus, it only remains to show we can choose $\xi$ so that $\mU$ is relatively prime to $\mL$.

Recall that $\xi$ is a Hecke character of $K$ so that $\xi_{\infty}(z) = \left(\frac{z}{|z|}\right)^{-t}$ for some $t \in \bfZ$ satisfying $-30 \leq t < -20$. For our purposes we choose $\xi$ to be an everywhere unramified character with $t = -24$.  Recalling that $\beta$ is an everywhere unramified Hecke character of $K$ of infinity type $\left(\frac{z}{|z|}\right)^{30}$, we have $g:= g_{\xi \beta^{-1}}\in S_{7}(3,\chi_{K})$ \cite[Theorem 12.5]{IwaniecBook}.  In fact, one can check via SAGE that $g$ is the unique newform in $S_7(3,\chi)$, i.e.,
\begin{equation*}
g = q - 27q^{3} + 64q^{4} - 286q^{7} + 729q^{9} - 1728q^{12} + 506q^{13} +
4096q^{16} - 10582q^{19} + \cdots.
\end{equation*}
Observe that $g^{c} = g$.

To deal with $\mU$, we first consider
\begin{equation*}
\frac{\prod_{j=1}^{5} \pi^{-13+2j} L(9-j, \BC(\phi) \otimes \chi_{K})}{\langle \phi, \phi\rangle^{5}}.
\end{equation*}
One thing to note here is that while the main theorem has the Euler factors at $D_{K}$ removed, so the Euler factor at 3 for us, this is not necessary in the case that $\phi$ has full level.  One can see this by running through the proof of \cite[Theorem 18.1]{Ikeda08}.
Using the relation between $L(s, \BC(\phi) \otimes \chi_{K})$ and $L(s,\phi \otimes g)$, we see we want to consider
\begin{equation*}
\prod_{j=1}^{5} \frac{L(j, \phi \otimes g)}{\pi^{6-2j} \langle \phi, \phi \rangle}.
\end{equation*}
Set
\begin{equation*}
L_{\alg}(j,\phi \otimes g) = \frac{L(j, \phi \otimes g)}{\pi^{6-2j} G(\chi_{3}) i^{33-2j} \langle \phi, \phi \rangle}.
\end{equation*}
We need to calculate $L_{\alg}(j, \phi \otimes g)$ for $j = 7,8, 9, 10, 11$ and see these are relatively prime to the elements in $\mL$.   We have
\begin{align*}
L_{\alg}(7,\phi \otimes g) &= \frac{7^{2} \cdot 13 \cdot 17 \cdot 19 \cdot 107}{2^{5} \cdot 3^{4} \cdot 5^{18}}\\
L_{\alg}(8,\phi \otimes g) &= \frac{7 \cdot 17 \cdot 127 \cdot 7607}{2^{3} \cdot 3^{6} \cdot 5^{18}}\\
L_{\alg}(9,\phi \otimes g) &= \frac{2 \cdot 109 \cdot 1428767}{3^{7} \cdot 5^{16} \cdot 7 \cdot 23}\\
L_{\alg}(10,\phi \otimes g) &= \frac{2^{7} \cdot 13 \cdot 853}{3^{10} \cdot 5^{12} \cdot 23}\\
L_{\alg}(11,\phi \otimes g) &= \frac{2^{9} \cdot 47 \cdot 2069}{3^{12} \cdot 5^{12} \cdot 7 \cdot 23}.
\end{align*}
Finally we consider the values of $L_{\alg}(j,\chi_{K}^{j}):= \frac{L(j,\chi_{K}^{j})}{\pi^{j}}$ for $j = 2, 3, 4, 5$.  The values of $L_{\alg}(j,\chi_{K}^{j})$ for $j$ odd can be obtained directly from SAGE; the others are easily obtained from well-known values of the Riemann zeta function.
\begin{align*}
L_{\alg}(2,\chi_{K}^2) &= (1-3^{-2})\zeta(2) = \frac{2^2}{3^3}\\
L_{\alg}(3, \chi_{K}^3) &= \frac{2^2 \cdot \sqrt{3}}{3^5} \\
L_{\alg}(4,\chi_{K}^4) &= (1-3^{-4})\zeta(4) = \frac{2^3}{3^6}\\
L_{\alg}(5,\chi_{K}^5) &= \frac{2^2 \cdot \sqrt{3}}{3^7}.
\end{align*}
Thus, we have $\mU$ is relatively prime to $\mL$.

Combining all of these computations we see Theorem \ref{Ikedacong} gives a congruence modulo $\ell$ for each $\ell \in \mL$ between the Ikeda lift of $\phi$ and a Hermitian modular form of weight 30 and full level that is orthogonal to $I_{\phi}$ and is not an Ikeda lift.

\bibliographystyle{plain}
 \bibliography{mybib}

\end{document}